\newtheorem{theorem}{Theorem}[section]
\newtheorem{corollary}[theorem]{Corollary}
\newtheorem{lemma}[theorem]{Lemma}
\theoremstyle{definition}
\newtheorem{remark}[theorem]{Remark}
\DeclareMathOperator{\Khr}{Khr}
\newcommand{\RP}{\mathbbm{RP}}
\newcommand{\calF}{{\mathcal F}}
\newcommand{\partialbar}{\overline{\partial}}
\newcommand{\id}{\mathbbm{1}}
\newcommand{\Tbar}{{\overline{T}}}
\newcommand{\Cbar}{{\overline{C}}}
\newcommand{\sigmabar}{{\overline{\sigma}}}
\newcommand{\Ttilde}{{\tilde{T}}}
\newcommand{\alphabar}{{\overline{\alpha}}}
\newcommand{\betabar}{{\overline{\beta}}}
\newcommand{\ptb}{L}
\newcommand{\ptc}{R}
\newcommand{\pto}{C}
\DeclareMathOperator{\etadot}{\dot{\eta}}
\DeclareMathOperator{\epsilondot}{\dot{\epsilon}}
\DeclareMathOperator{\F}{{\mathbbm{F}}}
\begin{document}

\title{Khovanov homology via 1-tangle diagrams in the annulus}

\begin{abstract}
We show that the reduced Khovanov homology of an oriented link $L$ in
$S^3$ can be expressed as the homology of a chain complex
constructed from a description of $L$ as the closure of a 1-tangle
diagram $T$ in the annulus.
Our chain complex is constructed using a cube of resolutions of
$T$ in a manner similar to ordinary Khovanov homology, but it is
typically smaller than the ordinary Khovanov chain complex and has
several unusual features, such as
\emph{long differentials} corresponding to pairs of successive saddles
in the cube of resolutions.
Our chain complex carries a natural filtration, which we use to
construct a spectral sequence that converges to reduced Khovanov
homology.
Our results are part of a larger program to construct an analog
of Khovanov homology for links in lens spaces by generalizing a
symplectic interpretation of Khovanov homology due to Hedden, Herald,
Hogancamp, and Kirk, and our chain complex was predicted by this
program for the case when the lens space is $S^3$.
\end{abstract}

\author{David Boozer} 

\date{\today}

\maketitle

\section{Introduction}

Khovanov homology is a powerful invariant defined for oriented links
in $S^3$ \cite{Khovanov}.
The Khovanov homology of an oriented link is a finitely-generated
bigraded abelian group that categorifies its Jones
polynomial \cite{Jones}; roughly speaking, the relationship between
the Khovanov homology of a link and its Jones polynomial is analogous
to the relationship between the singular homology of a topological
space and its Euler characteristic.
The Jones polynomial of a link can be recovered from its Khovanov
homology, but the Khovanov homology generally contains more
information: it can sometimes distinguish between links with the same
Jones polynomial, and Khovanov homology detects the unknot
\cite{Kronheimer-2}, but it is not known whether the same is true of
the Jones polynomial.

Here we construct a new chain complex for the reduced Khovanov
homology of a link via a description of the link as the closure of an
oriented 1-tangle diagram $T$ in the annulus $S^1 \times [0,1]$.
We choose a basepoint $b_0 \in S^1$ and take the boundary of $T$ to be
the points $(b_0,0)$ and $(b_0,1)$ on the inner and outer bounding
circles of the annulus.
We close $T$ with an unknotted overpass or underpass arc to obtain
link diagrams $T^+$ and $T^-$.
These link diagrams describe oriented links in $S^3$ that are unique
up to isotopy, which for simplicity we also denote by $T^+$ and $T^-$.
We construct a chain complex $(C_{T^\pm},\partial_{T^\pm})$ for the
link $T^\pm$ from a cube of resolutions of $T$.
An example tangle diagram $T$ and its cube of resolutions are shown in
Figure \ref{fig:example-T}.
For this example the link diagrams $T^+$ and $T^-$ describe the unknot
and right trefoil, respectively.

\begin{figure}
  \centering
  \includegraphics[scale=0.35]{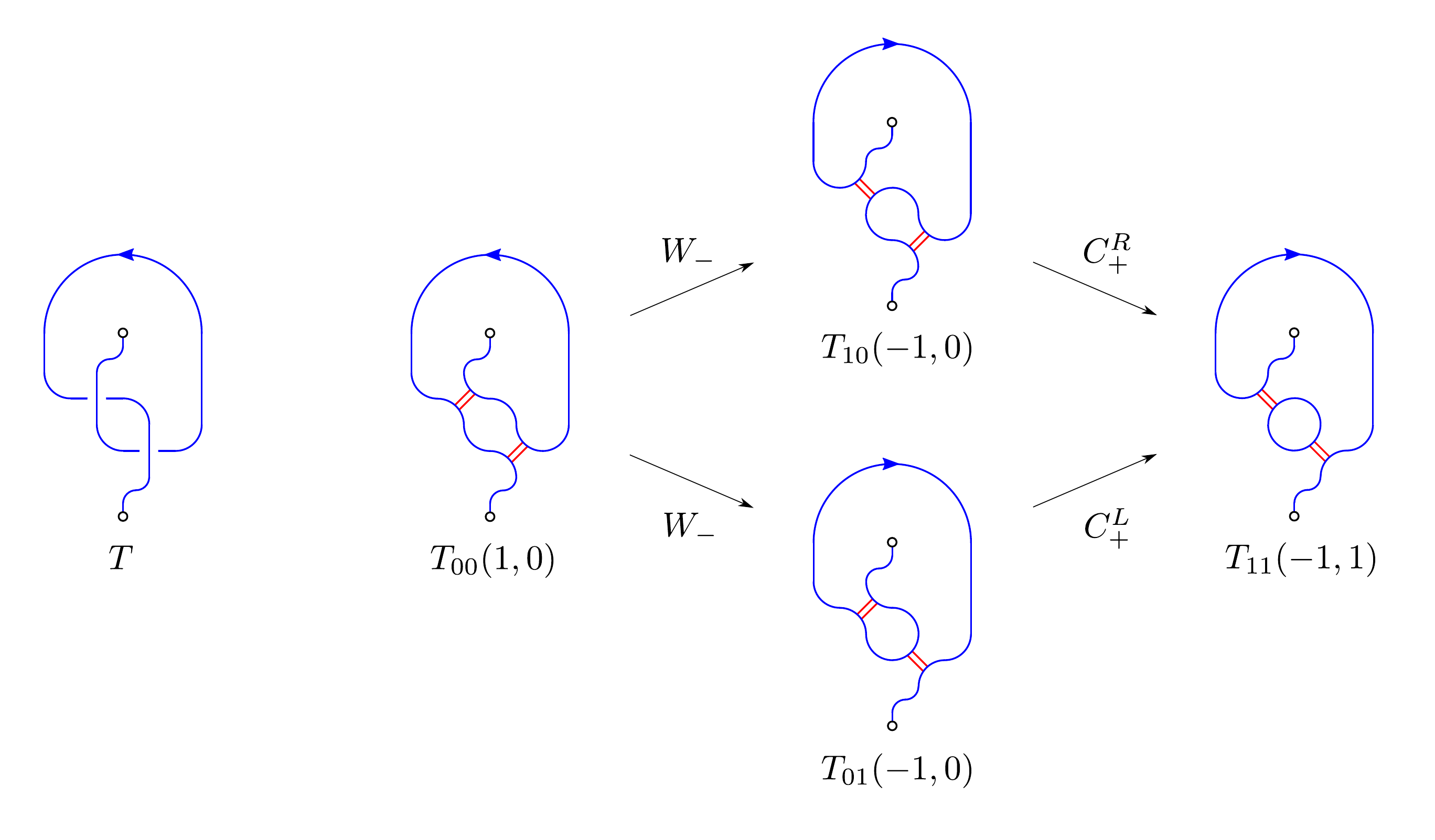}
\caption{
\label{fig:example-T}
Example tangle diagram $T$ and its cube of resolutions.
For each planar tangle $T_i$ in the cube of resolutions, we indicate
its winding number $n_i$ and circle number $r_i$ as $T_i(n_i,r_i)$.
The saddles in the cube of resolutions are labeled by their types, as
described in Section \ref{sec:chain-complex}.
}
\end{figure}

The vertices of the cube of resolutions of $T$ are planar tangles.
Each planar tangle consists of an arc component, which we orient in
the \emph{outward} direction from $(b_0,0)$ to $(b_0,1)$, and some
number of circle components.
We assign a vector space to each planar tangle based on its number of
circle components in a manner analogous to the usual Khovanov chain
complex, and we take the direct sum of these vector spaces to
construct the vector space $C_{T^\pm}$.

The edges of the cube of resolutions of $T$ are saddles between pairs
of planar tangles.
The image of the arc component of a planar tangle under the
projection $S^1 \times [0,1] \rightarrow S^1$ is an oriented loop
based at $b_0$, and we define the \emph{winding number} of the planar
tangle to be the number of times this loop winds counterclockwise
around $S^1$.
A saddle either preserves the winding number or changes it by two.

To each saddle that preserves the winding number, we assign the same
linear map as in ordinary reduced Khovanov homology, where the arc
component plays the role of the marked circle component.
We sum the linear maps for all saddles that preserve the winding
number to obtain a differential $\partial_T^0$ that squares to zero.
In Figure \ref{fig:example-T}, the saddles
$T_{10} \rightarrow T_{11}$ and $T_{01} \rightarrow T_{11}$ preserve
the winding number and hence contribute to $\partial_T^0$.

We next assign a linear map that we call a \emph{long differential} to
each pair of successive saddles for which one saddle lowers the
winding number by two for $T^+$ or raises it by two for $T^-$, and one
saddle connects a circle to the \emph{left} side of the arc component,
given its outward orientation.
We sum the linear maps for all such pairs of saddles to obtain a
differential $\partial_T^\pm$ for $T^\pm$.
In Figure  \ref{fig:example-T},
the saddle $T_{00} \rightarrow T_{01}$ lowers the winding number by
two and the saddle $T_{01} \rightarrow T_{11}$ splits a circle from
the left side of the arc component, so this pair of saddles
contributes to $\partial_T^+$.
There are no saddles that raise the winding number by two, so
$\partial_T^- = 0$.

The total differential $\partial_{T^\pm}$ is then given by
\begin{align*}
  \partial_{T^\pm} = \partial_T^0 + \partial_T^\pm.
\end{align*}
We prove:

\begin{theorem}
\label{theorem:main-intro}
The chain complex $(C_{T^\pm},\partial_{T^\pm})$ is homotopy
equivalent to the chain complex for the reduced Khovanov homology of
the link diagram $T^\pm$.
\end{theorem}

\begin{corollary}
The homology of the chain complex $(C_{T^\pm},\partial_{T^\pm})$ is
the reduced Khovanov homology of the link $T^\pm$.
\end{corollary}

If the planar tangles in the cube of resolutions of $T$ all have
winding number zero, then it is possible to close $T$ with an arc that
does not cross $T$, and for such an arc
$(C_{T^\pm}, \partial_{T^\pm}) = (C_{T^\pm}, \partial_T^0)$ is the usual
Khovanov chain complex for the link diagram $T^\pm$.
But in general our chain complex is smaller than the usual chain
complex, since crossings between $T$ and the closing arc are not
included in the cube of resolutions.
For example, a right trefoil has crossing number three, but we can
compute its reduced Khovanov homology using the tangle diagram shown
in Figure \ref{fig:example-T}, which has only two crossings.

We also prove:

\begin{theorem}
\label{theorem:spectral-intro}
There is a spectral sequence with $E_2$ page given by the homology of
$(C_{T^\pm},\partial_T^0)$ and differential $d_2$ induced by
$\partial_T^\pm$ that converges to the reduced Khovanov
homology of the link $T^\pm$.
\end{theorem}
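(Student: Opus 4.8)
The natural approach is to exhibit the desired spectral sequence as the one associated to a bounded filtration of $(C_{T^\pm},\partial_{T^\pm})$, chosen so that the filtration separates the two pieces $\partial_{T^\pm}^0$ and $\partial_{T^\pm}^\pm$ of the differential. Concretely, I would look for an integer grading $k$ on $C_{T^\pm}$ with respect to which every single-saddle summand of $\partial_{T^\pm}^0$ has degree $1$ and every pair-of-saddles summand of $\partial_{T^\pm}^\pm$ has degree at least $2$. Such a grading should be read off from the construction: because $\partial_{T^\pm}^0$ is assembled from single saddles and $\partial_{T^\pm}^\pm$ from composites of two saddles, I expect $k$ to be (a component of) the grading that counts applied saddles — equivalently, the homological grading of $C_{T^\pm}$ should split into a ``single-saddle'' part and a ``paired-saddle'' part, with $k$ the latter. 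Verifying this homogeneity from the explicit formula for $\partial_{T^\pm}$ is the crux of the argument, and where I expect essentially all of the work to lie; note that once homogeneity is known, the identity $(\partial_{T^\pm}^0)^2=0$ recorded in the statement becomes automatic, since it is the degree-$2$ component of $\partial_{T^\pm}^2=0$ (all remaining contributions to $\partial_{T^\pm}^2$ have degree at least $3$).

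Granting such a grading $k$, set $F^p C_{T^\pm}=\bigoplus_{k\ge p}(C_{T^\pm})_k$. Since $\partial_{T^\pm}^0$ raises $k$ by $1$ and $\partial_{T^\pm}^\pm$ raises $k$ by at least $2$, we get $\partial_{T^\pm}(F^p)\subseteq F^{p+1}\subseteq F^p$, so $\{F^p\}$ is a decreasing filtration of $(C_{T^\pm},\partial_{T^\pm})$ by subcomplexes; it is finite because $C_{T^\pm}$ is finitely generated and supported in finitely many values of $k$ (clear from the construction from the finite diagram $T$, and consistent with Theorem~\ref{theorem:main-intro}). I would then invoke the standard spectral sequence $\{E_r,d_r\}$ of a filtered complex. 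Its page-$0$ differential $d_0$ vanishes, because every term of $\partial_{T^\pm}$ strictly raises $k$; hence $E_1$ is canonically the associated graded $\gr_F C_{T^\pm}$, which as a bigraded group is just $C_{T^\pm}$, and the induced differential $d_1$ is the part of $\partial_{T^\pm}$ that raises $k$ by exactly $1$, namely $\partial_{T^\pm}^0$. Therefore $E_2=H(C_{T^\pm},\partial_{T^\pm}^0)$, which is the claimed description of the $E_2$ page.

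Finally, convergence and the identification of the limit. Because the filtration is finite, the spectral sequence converges, with abutment the associated graded $\gr_F H(C_{T^\pm},\partial_{T^\pm})$. By Theorem~\ref{theorem:main-intro}, $(C_{T^\pm},\partial_{T^\pm})$ is chain-homotopy equivalent to the reduced Khovanov complex of $T^\pm$, so $H(C_{T^\pm},\partial_{T^\pm})$ is the reduced Khovanov homology of $T^\pm$; thus the spectral sequence converges to reduced Khovanov homology (over a field the induced filtration splits, recovering it up to isomorphism). The argument applies without change to both closures $T^+$ and $T^-$. In short, modulo the homogeneity bookkeeping of the first step, Theorem~\ref{theorem:spectral-intro} follows formally from $(\partial_{T^\pm}^0)^2=0$, finite generation of $C_{T^\pm}$, and Theorem~\ref{theorem:main-intro}; the genuine content lies in establishing that a grading $k$ with the required properties exists — i.e., that every term of the differential built from a single saddle, and every term built from a pair of saddles, behave as stated with respect to $k$.
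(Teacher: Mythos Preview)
Your approach is correct and is exactly the paper's: the grading $k$ you seek is the \emph{resolution degree} $r(i)$ (the number of $1$'s in the resolution string), the filtration is $K_T^r=\bigoplus_{s\ge r}C_T^s$, and convergence plus identification of the limit follow from finiteness together with Theorem~\ref{theorem:main-intro}. The only point to adjust is your emphasis: the homogeneity check you flag as the ``crux'' is in fact immediate from the definitions, since every map $\partial_n,\partialtilde_n^L,Q_n,P_n$ is a sum over single saddles (hence raises $r(i)$ by exactly $1$) and $\partial_{T^\pm}^\pm$ is built from products of two such maps (hence raises $r(i)$ by exactly $2$).
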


The vector spaces $C_{T^+}$ and $C_{T^-}$ differ only in their
bigrading structure, so if we ignore the bigradings the $E_2$ pages of
the spectral sequences for $T^+$ and $T^-$ are the same.

Khovanov homology was originally defined only for links in $S^3$, and
an important open problem is to generalize Khovanov homology to links
in arbitrary 3-manifolds.
Khovanov homology has been extended to links in $I$-bundles over
arbitrary surfaces by Asaeda, Przytycki, and Sikora \cite{Asaeda-1},
to links in $S^2 \times S^1$ by Rozansky \cite{Rozansky}, to links in
$\RP^3$ by Gabrov\v{s}ek \cite{Gabrovsek}, and to links in all
connected sums of $S^2 \times S^1$ by Willis \cite{Willis}.
The results presented here are part of a larger program described in
\cite{Boozer-Fukaya}
to construct Khovanov homology for links in arbitrary lens spaces by
generalizing a symplectic interpretation of Khovanov homology due to
Hedden, Herald, Hogancamp, and Kirk \cite{Hedden-3}.
This interpretation originated as an offshoot of their project to
construct \emph{pillowcase homology} \cite{Hedden-1,Hedden-2}, a
symplectic counterpart to Kronheimer and Mrowka's singular instanton
link homology \cite{Kronheimer-2,Kronheimer-1,Kronheimer-3}.

The setup for pillowcase homology is as follows.
Given an oriented link $L$ in $S^3$, one considers a Heegaard
splitting
\begin{align}
  \label{eqn:split-hhhk}
  (S^3,L) = (B^3,T_0) \cup_{(S^2,4)} (B^3,T_1),
\end{align}
where the Heegaard surface $(S^2,4)$ is a 2-sphere that
transversely intersects $L$ in four points, the handlebodies
$(B^3,T_0)$ and $(B^3,T_1)$ are closed 3-balls containing 2-tangles
$T_0$ and $T_1$, and $T_0$ is trivial.
To the Heegaard surface $(S^2,4)$ one associates the irreducible locus
$R^*(S^2,4)$ of the traceless $SU(2)$ character variety for the
2-sphere with four punctures, a symplectic manifold known as the
\emph{pillowcase}.
To the handlebodies $(B^3,T_0)$ and $(B^3,T_1)$ one associates
traceless $SU(2)$ character varieties
$R_\pi^\natural(B^3,T_0)$ and $R^*(B^3,T_1)$.
By pulling back $SU(2)$ representations along the inclusions
$(S^2,4) \hookrightarrow (B^3,T_0)$ and
$(S^2,4) \hookrightarrow (B^3,T_1)$, one obtains maps
$R_\pi^\natural(B^3,T_0) \rightarrow R^*(S^2,4)$ and
$R^*(B^3,T_1) \rightarrow R^*(S^2,4)$ of the corresponding
character varieties.
The images of these maps define Lagrangians $L_0$ and $W_1$ in
$R^*(S^2,4)$.
Roughly speaking, the pillowcase homology of $(S^3,L)$ is defined to
be the Lagrangian Floer homology of the pair of Lagrangians
$(L_0,W_1)$.

In \cite{Hedden-3}, Hedden, Herald, Hogancamp, and Kirk obtain a
symplectic interpretation of reduced Khovanov homology by modifying
the construction used to define pillowcase homology.
Instead of working directly with the tangle $T_1$, they consider a
cube of resolutions of a 2-tangle diagram in the disk obtained by
projecting $T_1$ onto the plane.
This cube of resolutions is used to construct an object
$(X_1,\delta_1)$
in the twisted Fukaya category of $R^*(S^2,4)$, an $A_\infty$ category
that can be thought of as an analog for Fukaya categories of the
notion of a cochain complex for vector spaces.
The object $X_1$ consists of shifted copies of
Lagrangians corresponding to planar tangles at the vertices of the
cube, and the morphism $\delta_1:X \rightarrow X$ consists of maps of
Lagrangians corresponding to saddles at the edges of the cube.
The trivial tangle $T_0$ corresponds to an object $(W_0,0)$ of the
twisted Fukaya category.

The morphism spaces of the twisted Fukaya category have the structure
of cochain complexes, so in particular the space of morphisms from
$(W_0,0)$ to $(X_1,\delta_1)$ is a cochain complex.
Hedden, Herald, Hogancamp, and Kirk show this cochain complex is
identical to the usual cochain complex for the reduced Khovanov
homology $\Khr(L)$ of $L$, thus proving:

\begin{theorem}(Hedden--Herald--Hogancamp--Kirk
  \cite[Corollary 1.2]{Hedden-3})
\label{theorem:hhhk-khr}
We have an isomorphism of bigraded vector spaces
\begin{align*}
  \Khr(L) \rightarrow H^*(\hom((W_0,0), (X_1,\delta_1))),
\end{align*}
where the $\hom$ space is taken within the twisted Fukaya category of
$R^*(S^2,4)$.
\end{theorem}

Theorem \ref{theorem:hhhk-khr} shows that the Fukaya
category of $R^*(S^2,4)$ knows about Khovanov homology.
Our strategy for generalizing Khovanov homology is based on
generalizing this observation.
Theorem \ref{theorem:hhhk-khr} is formulated in terms of $R^*(S^2,4)$
because this is the character variety of the Heegaard surface
$(S^2,4)$ for the Heegaard splitting (\ref{eqn:split-hhhk}) of
$(S^3,L)$.
In general, one can split a 3-manifold $Y$ containing a
link $L$ along a Heegaard surface $(\Sigma,n)$ that intersects $L$ in
$n$ points, and in light of Theorem \ref{theorem:hhhk-khr} one might
hope that the Fukaya category of the corresponding character variety
$R^*(\Sigma,n)$ could provide clues as to how to generalize Khovanov
homology to links in $Y$.

As a first step towards this goal, in \cite{Boozer-Fukaya} we consider
the case of links in lens spaces.
Given an oriented link $L$ in a lens space $Y$, we consider a Heegaard
splitting
\begin{align*}
  (Y,L) = (U_0,T_0) \cup_{(T^2,2)} (U_1,T_1)
\end{align*}
such that the Heegaard surface $(T^2,2)$ is a 2-torus that
transversely intersects $L$ in two points, the handlebodies
$(U_0,T_0)$ and $(U_1,T_1)$ are solid tori containing 1-tangles $T_0$
and $T_1$, and $T_0$ is trivial.
To the Heegaard surface $(T^2,2)$ we associate the irreducible locus
$R^*(T^2,2)$ of the traceless $SU(2)$ character variety for the torus
with two punctures.
We project $(U_1,T_1)$ onto the plane to obtain a 1-tangle diagram $T$
in the annulus, and we use a cube of resolutions of $T$ to construct
an object $(X_1,\delta_1)$ of the twisted Fukaya category of
$R^*(T^2,2)$.
The handlebody $(U_0,T_0)$ corresponds to an object $(W_0,0)$ of the
twisted Fukaya category.

The space of morphisms from $(W_0,0)$ to
$(X_1,\delta_1)$ has the structure of a cochain complex, and
in \cite{Boozer-Fukaya} we use a partly conjectural description of
the Fukaya category for $R^*(T^2,2)$ to explicitly construct this
cochain complex for the case of links in $S^3$.
The result is the chain complex $(C_{T^\pm}, \partial_{T^\pm})$ that
we consider here.
The methods described in \cite{Boozer-Fukaya} can be used to construct
chain complexes for links in lens spaces, but they do not guarantee
that these chain complexes yield link invariants, and the purpose of
the current paper is to prove this is in fact the case for the chain
complex $(C_{T^\pm}, \partial_{T^\pm})$.
Our Theorem \ref{theorem:main-intro} can thus be viewed as the analog
for $R^*(T^2,2)$ of Theorem \ref{theorem:hhhk-khr} for $R^*(S^2,4)$,
but whereas $R^*(S^2,4)$ yields the usual Khovanov chain complex,
$R^*(T^2,2)$ yields a chain complex that is only homotopy equivalent
to the usual Khovanov chain complex.
As we describe in Remark \ref{remark:pillowcase}, our chain complex is
also relevant to the Fukaya category of $R^*(S^2,4)$.

One could perhaps view this successful prediction of a new chain
complex for links in $S^3$ as evidence that our approach might yield
invariants analogous to Khovanov homology for links in other lens
spaces.
In \cite{Boozer-Fukaya} we use this approach to explicitly
construct chain complexes for some links in $S^2 \times S^1$ and we
present results that suggest the cohomology is indeed a link
invariant.

The paper is organized as follows.
In Section \ref{sec:linear-maps},
we define vector spaces and linear maps that are used to construct the
chain complex $(C_{T^\pm}, \partial_{T^\pm})$.
In Section \ref{sec:chain-complex},
we explain how the chain complex $(C_{T^\pm}, \partial_{T^\pm})$ is
constructed from the cube of resolutions of the tangle diagram $T$,
and we use this chain complex to construct the spectral sequence
described in Theorem \ref{theorem:spectral-intro}.
In Section \ref{sec:example},
we illustrate our results using the example tangle diagram $T$ shown
in Figure \ref{fig:example-T}.
In Section \ref{sec:notation},
we introduce some notation that is useful for describing planar
tangles and saddles.
In Section \ref{sec:squares},
we express the differential $\partial_T^+$ in terms of commuting
squares of saddles in the cube of resolutions of $T$.
In Section \ref{sec:proof},
we outline the proof of Theorem \ref{theorem:main-intro}.
In Sections \ref{sec:induced-chain-complex} -- \ref{sec:saddle-2}, as
well as Appendices \ref{appendix:bigradings} and
\ref{appendix:induced-saddles},
we fill in technical details needed to complete the proof.

\section{Vector spaces and linear maps}
\label{sec:linear-maps}

Here we define vector spaces and linear maps that we will use to
construct the chain complex $(C_{T^\pm}, \partial_{T^\pm})$.
The vector spaces we consider are typically bigraded.
Given a bigraded vector space $V$, we use the notation $v^{(h,q)}$ to
indicate that a homogeneous vector $v \in V$ has bigrading $(h,q)$.
We refer to $h$ as the \emph{homological grading} and $q$ as the
\emph{quantum grading}.
We define the vector space $V[h_s,q_s]$ to be $V$ with gradings
shifted \emph{upward} by $(h_s,q_s)$, so if $v \in V$ is homogeneous
with bigrading $(h, q)$ then the corresponding vector
$v \in V[h_s,q_s]$ is homogeneous with bigrading $(h + h_s, q + q_s)$.
We define $\F$ to be the field of two elements, where $1 \in \F$ is
assigned bigrading $(0,0)$.
We define a two-dimensional bigraded $\F$-vector space
\begin{align*}
  A = \langle e^{(0,1)},\, x^{(0,-1)} \rangle.
\end{align*}

We define the following $\F$-linear maps.
We define \emph{unit maps} $\eta^{(0,1)}$ and $\etadot^{(0,-1)}$:
\begin{align*}
  &\eta:\F \rightarrow A, &
  &\eta(1) = e, \\
  &\etadot:\F \rightarrow A, &
  &\etadot(1) = x.
\end{align*}
We define \emph{counit maps} $\epsilon^{(0,1)}$ and
$\epsilondot^{(0,-1)}$:
\begin{align*}
  &\epsilon:A \rightarrow \F, &
  &\epsilon(e) = 0, \qquad
  \epsilon(x) = 1, \\
  &\epsilondot:A \rightarrow \F, &
  &\epsilondot(e) = 1, \qquad
  \epsilondot(x) = 0.
\end{align*}
We define a \emph{raising map} $\id_{ex}^{(0,2)}$ and a
\emph{lowering map} $\id_{xe}^{(0,-2)}$:
\begin{align*}
  &\id_{ex}:A \rightarrow A, &
  &\id_{ex}(e) = 0, \qquad
  \id_{ex}(x) = e, \\
  &\id_{xe}:A \rightarrow A, &
  &\id_{xe}(e) = x, \qquad
  \id_{xe}(x) = 0.
\end{align*}
We define \emph{projection maps} $\id_{ee}^{(0,0)}$ and
$\id_{xx}^{(0,0)}$:
\begin{align*}
  &\id_{ee}:A \rightarrow A, &
  &\id_{ee}(e) = e, \qquad
  \id_{ee}(x) = 0, \\
  &\id_{xx}:A \rightarrow A, &
  &\id_{xx}(e) = 0, \qquad
  \id_{xx}(x) = x.
\end{align*}
We define a \emph{multiplication map} $m^{(0,-1)}$:
\begin{align*}
  &m:A \otimes A \rightarrow A, &
  &m(e \otimes e) = e, \qquad
  m(e \otimes x) = m(x \otimes e) = x, \qquad
  m(x \otimes x) = 0.
\end{align*}
We define a \emph{comultiplication map} $\Delta^{(0,-1)}$:
\begin{align*}
  &\Delta:A \rightarrow A \otimes A, &
  &\Delta(e) = e \otimes x + x \otimes e, \qquad
  \Delta(x) = x \otimes x.
\end{align*}

The graded vector space $A$, together with the multiplication $m$,
comultiplication $\Delta$, unit $\eta$, and counit $\epsilon$, gives
Khovanov's Frobenius algebra.
For notational simplicity, given an $\F$-vector space $V$ we often
identify $V$ with $\F \otimes V$ and $V \otimes \F$.

\section{Chain complex $(C_{T^\pm},\, \partial_{T^\pm})$}
\label{sec:chain-complex}

Consider an oriented tangle diagram $T$ in the annulus
$S^1 \times [0,1]$.
We choose a point $b_0 \in S^1$ and take the boundary of $T$ to be the
points $(b_0,0)$ and $(b_1,1)$ on the inner and outer bounding circles
of the annulus.
We close $T$ with an unknotted overpass arc $A^+$ or underpass arc
$A^-$ that crosses $T$ transversely to obtain an oriented link diagram
$T^\pm := T \cup A^\pm$.
The image of $A^\pm$ under the projection
$S^1 \times [0,1] \rightarrow S^1$ is a loop based at $b_0$, and we
choose $A^\pm$ such that this loop is contractible.
We construct a bigraded chain complex
$(C_{T^\pm},\, \partial_{T^\pm})$ for the link diagram $T^\pm$ from a
cube of resolutions of the tangle diagram $T$ as follows.

First we describe the cube of resolutions.
Let $m_+(T)$ and $m_-(T)$ denote the number of positive and negative
crossings of $T$, and let $m(T) = m_+(T) + m_-(T)$ denote the total
number of crossings.
We can specify a planar resolution of $T$ by specifying how each
crossing is to be resolved.
Define the 0-resolution, respectively 1-resolution, of a crossing such
that the overpass turns left, respectively right.
We fix an arbitrary ordering of the crossings and encode the data
needed to specify a planar resolution as a binary string of length
$m(T)$, so the $k$-th bit of the binary string specifies the
resolution of the $k$-th crossing of $T$.
Let $I = \{0,1\}^{m(T)}$ denote the set of binary strings of length
$m(T)$.
For each binary string $i \in I$, let $T_i$ denote the corresponding
planar resolution of $T$.
We define the \emph{resolution degree} $r(T_i)$ of the planar tangle
$T_i$ to be the number of 1's in the binary string $i$.
For each pair of binary strings $i, j \in I$ such that
$r(T_j) = r(T_i) + 1$, we define a saddle $T_i \rightarrow T_j$.
The strings $i$ and $j$ are identical except for a single bit that
is 0 in $i$ but 1 in $j$, and the saddle $T_i \rightarrow T_j$ changes
the resolution of the corresponding crossing of $T$ from the
0-resolution in $T_i$ to the 1-resolution in $T_j$.

Next we describe the bigraded vector space $C_{T^\pm}$, which is
constructed from the planar tangles in the cube of resolutions of $T$.
A planar tangle $p$ consists of an arc component connecting the points
$(b_0,0)$ and $(b_0,1)$ on the inner and outer boundary of the annulus
$S^1 \times [0,1]$, together with some number of circle components.
We orient the arc component of $p$ in the \emph{outward} direction
from $(b_0,0)$ to $(b_0,1)$.
The image of the arc component of $p$ under the projection
$S^1 \times [0,1] \rightarrow S^1$ is an oriented loop based at $b_0$,
and we define the \emph{winding number} $w(p)$ to be the number of
times this loop winds \emph{counterclockwise} around $S^1$.
We define the \emph{circle number} $c(p)$ to be the number of circle
components of $p$.
Let $a_+(A^\pm,T)$ and $a_-(A^\pm,T)$ denote the number of positive
and negative crossings between $A^\pm$ and $T$.
For each planar tangle $T_i$ in the cube of resolutions of $T$, we
define a corresponding bigraded vector space
\begin{align*}
  C_{T_i}^\pm = A^{\otimes c(T_i)}[h^\pm(T,T_i),\, q^\pm(T,T_i)],
\end{align*}
where the bigrading shift is given by
\begin{align}
  \label{eqn:h}
  h^\pm(T,T_i) &=
  -m_-(T) + \frac{1}{2}(a_+(A^\pm,T) - a_-(A^\pm,T) \pm w(T_i)) +
  r(T_i), \\
  \label{eqn:q}
  q^\pm(T,T_i) &=
  m_+(T) - 2m_-(T) + \frac{3}{2}(a_+(A^\pm,T) - a_-(A^\pm,T) \pm
  w(T_i)) + r(T_i).
\end{align}
We define the bigraded vector space $C_{T^\pm}$ as
\begin{align*}
  C_{T^\pm} = \bigoplus_{i \in I} C_{T_i}^\pm.
\end{align*}

Next we describe the differential $\partial_{T^\pm}$, which is
constructed from the saddles in the cube of resolutions of $T$.
We classify the saddles as type $C_\pm^R$, $C_\pm^L$, $C_\pm^C$,
or $W_\pm$, as shown in Figure \ref{fig:saddle-types}.
A saddle of type $C_\pm^R$, $C_\pm^L$, or $C_\pm^C$ splits $(+)$ or
merges $(-)$ a circle component.
A saddle of type $C_\pm^R$ or $C_\pm^L$ connects a circle component
to the right $(R)$ or left $(L)$ side of the outward-oriented arc
component.
A saddle of type $C_\pm^C$ connects two circle components.
A saddle of type $W_\pm$ raises $(+)$ or lowers $(-)$ the winding
number by two.

We assign several linear maps to each saddle in the cube of
resolutions.
We denote a planar tangle in the cube of resolutions by $p$ or
$q$, and indicate that a planar tangle has winding number $n$ and
circle number $r$ using the notation $p(n,r)$ or $q(n,r)$.

\begin{figure}
  \centering
  \includegraphics[scale=0.5]{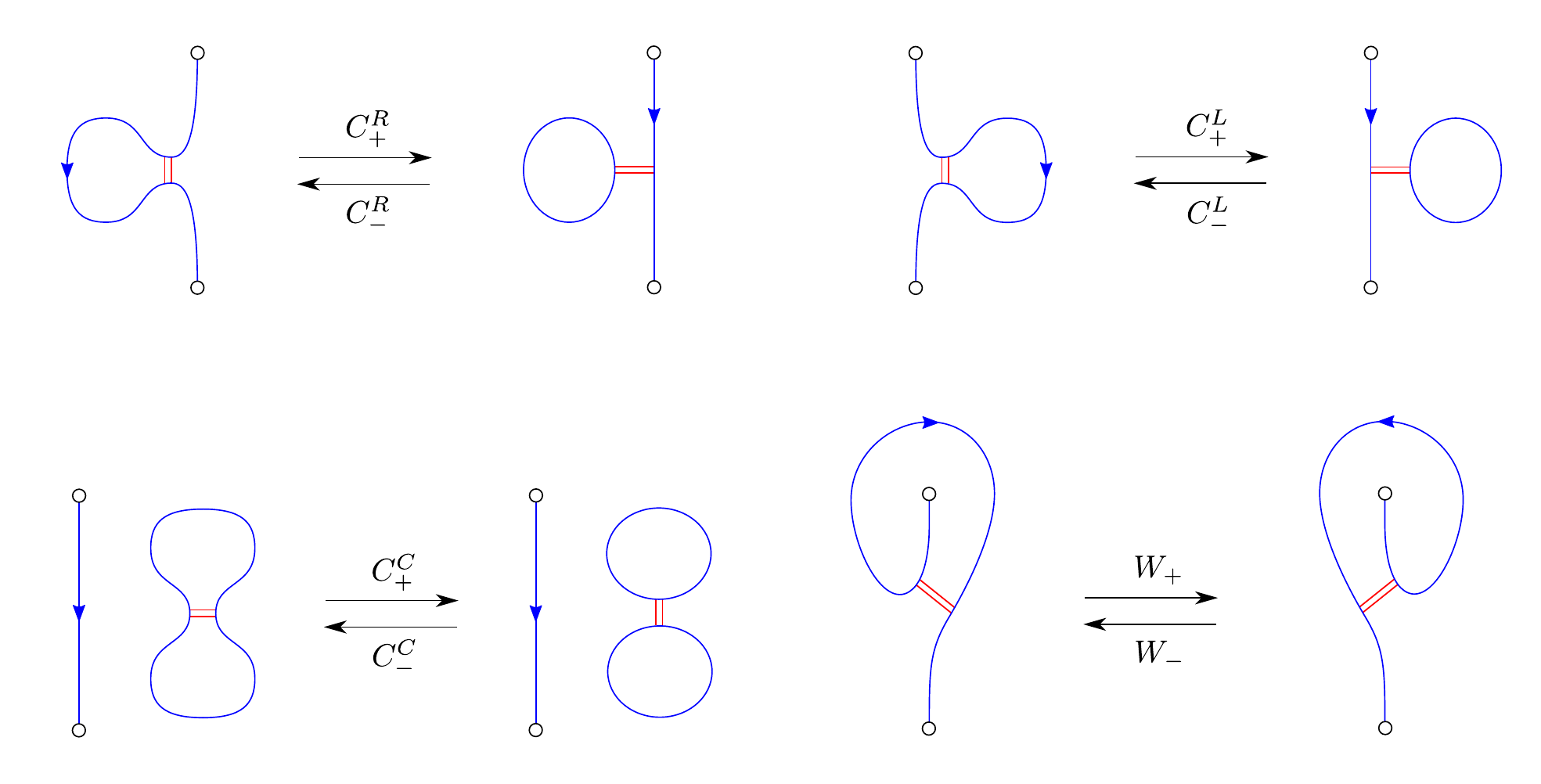}
\caption{
\label{fig:saddle-types}
Example saddles of type $C_\pm^R$, $C_\pm^L$, $C_\pm^C$, and $W_\pm$.
}
\end{figure}

For each saddle $s:p \rightarrow q$, we define a map
$T(s):C_p^\pm \rightarrow C_q^\pm$ as follows:
\begin{itemize}
\item
If $s:p(n,r) \rightarrow q(n,r+1)$ is type $C_+^L$ or $C_+^R$, we
define
\begin{align*}
  T(s) = (\id_{A^{\otimes r}} \otimes \etadot)[1,1]:
  A^{\otimes r}[h,q] \rightarrow (A^{\otimes r} \otimes A)[h+1,q+1].
\end{align*}

\item
If $s:p(n,r+1) \rightarrow q(n,r)$ is type $C_-^L$ or $C_-^R$, we
define
\begin{align*}
  T(s) = (\id_{A^{\otimes r}} \otimes \epsilondot)[1,1]:
  (A^{\otimes r} \otimes A)[h,q] \rightarrow A^{\otimes r}[h+1,q+1].
\end{align*}

\item
If $s:p(n,r+1) \rightarrow q(n,r+2)$ is type $C_+^C$, we define
\begin{align*}
  T(s) = (\id_{A^{\otimes r}} \otimes \Delta)[1,1]:
  (A^{\otimes r} \otimes A)[h,q] \rightarrow
  (A^{\otimes r} \otimes A \otimes A)[h+1,q+1].
\end{align*}

\item
If $s:p(n,r+2) \rightarrow q(n,r+1)$ is type $C_-^C$, we define
\begin{align*}
  T(s) = (\id_{A^{\otimes r}} \otimes m)[1,1]:
  (A^{\otimes r} \otimes A \otimes A)[h,q] \rightarrow
  (A^{\otimes r} \otimes A)[h+1,q+1].
\end{align*}

\item
If $s$ is type $W_\pm$, we define $T(s) = 0$.
\end{itemize}
We extend $T(s)$ by zero to obtain a map
$T(s):C_{T^\pm} \rightarrow C_{T^\pm}$.

For each saddle $s:p \rightarrow q$ and each orientation
$\beta \in \{L,R\}$, we define a map
$\Ttilde^\beta(s):C_p^\pm \rightarrow C_q^\pm$ as follows:
\begin{itemize}
\item
If $s:p(n,r) \rightarrow q(n,r+1)$ is type $C_+^\beta$, we
define
\begin{align*}
  \Ttilde^\beta(s) = (\id_{A^{\otimes r}} \otimes \eta)[1,1]:
  A^{\otimes r}[h,q] \rightarrow (A^{\otimes r} \otimes A)[h+1,q+1].
\end{align*}

\item
If $s:p(n,r+1) \rightarrow q(n,r)$ is type $C_-^\beta$, we define
\begin{align*}
  \Ttilde^\beta(s) = (\id_{A^{\otimes r}} \otimes \epsilon)[1,1]:
  (A^{\otimes r} \otimes A)[h,q] \rightarrow A^{\otimes r}[h+1,q+1].
\end{align*}

\item
If $s$ is not type $C_+^\beta$ or $C_-^\beta$, we define
$\Ttilde^\beta(s) = 0$.
\end{itemize}
We extend $\Ttilde^\beta(s)$ by zero to obtain a map
$\Ttilde^\beta(s):C_{T^\pm} \rightarrow C_{T^\pm}$.

For each saddle $s:p \rightarrow q$, we define a map
$P(s):C_p^- \rightarrow C_q^-$ as follows:
\begin{itemize}
\item
If $s:p(n,r) \rightarrow q(n+2,r)$ is type $W_+$, we define
\begin{align*}
  P(s) = \id_{A^{\otimes r}}[0, -2]:
  A^{\otimes r}[h,q] \rightarrow A^{\otimes r}[h,q-2].
\end{align*}

\item
If $s$ is not type $W_+$, we define $P(s) = 0$.
\end{itemize}
We extend $P(s)$ by zero to obtain a map
$P(s):C_{T^-} \rightarrow C_{T^-}$.

For each saddle $s:p \rightarrow q$, we define a map
$Q(s):C_p^+ \rightarrow C_q^+$ as follows:
\begin{itemize}
\item
If $s:p(n,r) \rightarrow q(n-2,r)$ is type $W_-$, we define
\begin{align*}
  Q(s) = \id_{A^{\otimes r}}[0, -2]:
  A^{\otimes r}[h,q] \rightarrow A^{\otimes r}[h,q-2].
\end{align*}

\item
If $s$ is not type $W_-$, we define $Q(s) = 0$.
\end{itemize}
We extend $Q(s)$ by zero to obtain a map
$Q(s):C_{T^+} \rightarrow C_{T^+}$.

In the above expressions, we have assumed a specific choice of
ordering of the factors of $A$ corresponding to the circle components
of the planar tangles.
For a different ordering of these factors we would need to modify the
above expressions accordingly.

We define a map
\begin{align*}
  \Ttilde(s) = \Ttilde^L(s) + \Ttilde^R(s).
\end{align*}
We define maps
\begin{align*}
  &T = \sum_s T(s), &
  &\Ttilde^\beta = \sum_s \Ttilde^\beta(s), &
  &P = \sum_s P(s), &
  &Q = \sum_s Q(s),
\end{align*}
where the sums are over all saddles in the cube of resolutions of $T$.
The bigradings of these maps are
\begin{align*}
  &(T)^{(1,0)}, & (\Ttilde^\beta)^{(1,2)}, &
  &(Q)^{(0,-2)}, &
  &(P)^{(0,-2)}.
\end{align*}
We define maps
\begin{align*}
  \partial_T^0 &= T, &
  \partial_T^+ &= Q\Ttilde^L + \Ttilde^L Q, &
  \partial_T^- &= P\Ttilde^L + \Ttilde^L P,
\end{align*}
which have bigrading $(1,0)$.
The differential $\partial_{T^\pm}$ for the link diagram $T^\pm$ is
then given by
\begin{align*}
  \partial_{T^\pm} = \partial_T^0 + \partial_T^\pm,
\end{align*}
and has bigrading $(1,0)$.
We note that for $\partial_T^0$ the assignment of linear maps to
saddles is the same as for the usual chain complex for reduced
Khovanov homology, where the arc component plays the role of the
marked circle component.
The new feature of our chain complex is the term $\partial_T^\pm$,
which describes \emph{long differentials} corresponding to pairs of
successive saddles.

Our chain complex can be viewed as a generalization of the usual chain
complex for reduced Khovanov homology in the following sense.
Consider a tangle diagram $T$ such that $w(T_i) = 0$ for all
$i \in I$.
Then $T$ can be closed with an arc $A^0$ that does not cross $T$ and
whose image under the projection $S^1 \times [0,1] \rightarrow S^1$ is
a contractible loop.
Since $A^0$ can be viewed as either an overpass or underpass arc,
we can take $A^+ = A^- = A^0$ and obtain a link diagram
$T^+ = T^- = T \cup A^0$.
There are no long differentials, so $\partial_{T^\pm} = \partial_T^0$,
and equations (\ref{eqn:h}) and (\ref{eqn:q}) for the bigrading shift
reduce to the usual expressions for Khovanov homology:
\begin{align}
  \label{eqn:h-q-ordinary}
  &h^\pm(T,T_i) = -m_-(T) + r(T_i), &
  &q^\pm(T,T_i) = m_+(T) - 2m_-(T) + r(T_i).
\end{align}
It follows that
$(C_{T^\pm},\,\partial_{T^\pm}) = (C_{T^\pm},\, \partial_T^0)$
is the usual chain complex for the reduced Khovanov homology of the
link diagram $T^+ = T^-$, where the marked point is taken to be
$(b_0,0)$ or $(b_0,1)$.

For a tangle diagram $T$ of this special form, equation
(\ref{eqn:h-q-ordinary}) shows that the homological grading
$h^\pm(T,T_i)$ is just the resolution degree $r(T_i)$ shifted by a
constant.
But in general this is not the case, since equation (\ref{eqn:h}) for
$h^\pm(T,T_i)$ contains an additional term that depends on $w(T_i)$.
In particular, both $\partial_T^0$ and $\partial_T^\pm$ increase the
homological grading by one, but $\partial_T^0$ increases the
resolution degree by one and $\partial_T^\pm$ increases the
resolution degree by two.
The fact that
$(\partial_{T^\pm})^2 = (\partial_T^0 + \partial_T^\pm)^2 = 0$
must hold in each resolution degree thus implies

\begin{lemma}
\label{lemma:d0}
We have $(\partial_T^0)^2 = 0$.
\end{lemma}

We can use the resolution degree to define a filtration
$\calF_{T^\pm}^0 = C_{T^\pm} \supset \calF_{T^\pm}^1 \supset
\calF_{T^\pm}^2 \supset \cdots$
of the chain complex $(C_{T^\pm}, \partial_{T^\pm})$ as follows:
\begin{align*}
  \calF_{T^\pm}^s := \bigoplus_{i \in I \mid r(T_i) \geq s} C_{T_i}^\pm.
\end{align*}
This filtration, and the fact that $(C_{T^\pm}, \partial_T^0)$ is a
chain complex by Lemma \ref{lemma:d0}, yield the spectral sequence
described in Theorem \ref{theorem:spectral-intro}.

\section{Example}
\label{sec:example}

We will now illustrate our results using the example tangle diagram
$T$ shown in Figure \ref{fig:example-T}.
The number of positive and negative crossings $m_+(T)$ and $m_-(T)$ of
$T$ are
\begin{align*}
  & m_+(T) = 2, &
  & m_-(T) = 0.
\end{align*}
The cube of resolutions of $T$ is shown in Figure
\ref{fig:example-T} and reproduced here:
\begin{eqnarray*}
\begin{tikzcd}
  {} & T_{10}(-1,0) \arrow{dr}{C_+^R} & {} \\
  T_{00}(1,0)
  \arrow{ur}{W_-} \arrow{dr}[swap]{W_-} & {} &
  T_{11}(-1,1), \\
  {} & T_{01}(-1,0) \arrow{ur}[swap]{C_+^L} & {}
\end{tikzcd}
\end{eqnarray*}
where $T_i(n_i,r_i)$ indicates that the planar tangle
$T_i$ has winding number $n_i$ and circle number $r_i$.

\subsection{Link diagram $T^+$}

Closing $T$ with an overpass arc $A^+$ yields a link diagram
$T^+ = T \cup A^+$ for the unknot.
We have $a_+(A^+,T) - a_-(A^+,T) = -1$, so the chain complex
$(C_{T^+},\, \partial_{T^+})$ is
\begin{eqnarray*}
\begin{tikzcd}
  {} & C_{T_{10}}^+ = \F[0,0] \arrow{dr}{\etadot[1,1]} & {} \\
  C_{T_{00}}^+ = \F[0,2] \arrow[Rightarrow]{rr}{\eta[1,-1]} & {} &
  C_{T_{11}}^+ = A[1,1]. \\
  {} & C_{T_{01}}^+ = \F[0,0] \arrow{ur}[swap]{\etadot[1,1]} & {}
\end{tikzcd}
\end{eqnarray*}
The differential is
$\partial_{T^+} = \partial_T^0 + \partial_T^+$,
where $\partial_T^0$ consists of the two arrows labeled
$\etadot[1,1]$ and $\partial_T^+$ consists of the double arrow
labeled $\eta[1,-1]$.
Note that the pair of saddles $T_{00} \rightarrow T_{01}$ of type
$W_-$ and $T_{01} \rightarrow T_{11}$ of type $C_+^L$ give the long
differential $\eta[1,-1]:C_{T_{00}}^+ \rightarrow C_{T_{11}}^+$.
The cohomology of $(C_{T^+},\, \partial_{T^+})$
is the reduced Khovanov homology for the unknot:
\begin{align*}
  H^*(C_{T^+},\, \partial_{T^+}) = \Khr(T^+) = \F[0,0].
\end{align*}

We next consider the spectral sequence for $(C_{T^+},\partial_{T^+})$.
The $E_2$ page is given by
\begin{align*}
  E_2 = H^*(C_{T^+},\, \partial_T^0) =
  \F[0,0] \oplus \F[0,2] \oplus \F[1,2],
\end{align*}
and the differential $d_2:E_2 \rightarrow E_2$, which is induced by
$\partial_T^+$, maps the generator in bigrading $(0,2)$ to the
generator in bigrading $(1,2)$.
The $E_3$ page is thus given by
\begin{align*}
  E_3 = H^*(E_2,d_2) = \Khr(T^+) = \F[0,0],
\end{align*}
and the differential $d_3:E_3 \rightarrow E_3$ is zero, so the
spectral sequence collapses at this page.

\subsection{Link diagram $T^-$}

Closing $T$ with an underpass arc $A^-$ yields a link diagram
$T^- = T \cup A^-$ for the right trefoil.
We have $a_+(A^-,T) - a_-(A^-,T) = 1$, so
the chain complex $(C_{T^-},\, \partial_{T^-})$ is
\begin{eqnarray*}
\begin{tikzcd}
  {} & C_{T_{10}}^- = \F[2,6] \arrow{dr}{\etadot[1,1]} & {} \\
  C_{T_{00}}^- = \F[0,2] & {} &
  C_{T_{11}}^- = A[3,7]. \\
  {} & C_{T_{01}}^- = \F[2,6] \arrow{ur}[swap]{\etadot[1,1]} & {}
\end{tikzcd}
\end{eqnarray*}
The differential is
$\partial_{T^-} = \partial_T^0 + \partial_T^-$,
where $\partial_T^0$ consists of the two arrows labeled
$\etadot[1,1]$ and $\partial_T^- = 0$.
The cohomology of
$(C_{T^-},\, \partial_{T^-})$
is the reduced Khovanov homology for the right trefoil:
\begin{align*}
  H^*(C_{T^-},\, \partial_{T^-}) = \Khr(T^-) =
  \F[0,2] \oplus \F[2,6] \oplus \F[3,8].
\end{align*}

We next consider the spectral sequence for
$(C_{T^-},\, \partial_{T^-})$.
The $E_2$ page is given by
\begin{align*}
  E_2 = H^*(C_{T^-},\, \partial_T^0) = \Khr(T^-) =
  \F[0,2] \oplus \F[2,6] \oplus \F[3,8],
\end{align*}
and the differential $d_2:E_2 \rightarrow E_2$ is zero, so the
spectral sequence collapses at this page.
Note that if we ignore bigradings, the $E_2$ pages of the spectral
sequences for $T^+$ and $T^-$ are the same.

\section{Notation for planar tangles and saddles}
\label{sec:notation}

We now introduce some notation that will be useful for describing
planar tangles and saddles.
Recall that a planar tangle in the annulus $S^1 \times [0,1]$ consists
of an arc component connecting the points $y_0 := (b_0,0)$ and
$y_1 := (b_0,1)$ on the inner and outer bounding circles, together
with some number of circle components.
We orient the arc component in the outward direction from $y_0$ to
$y_1$.
For the purpose of defining our notation we choose arbitrary
orientations for the circle components, and view different choices of
orientations as providing equivalent descriptions of the same
underlying tangle.

The orientation of the arc component defines an ordering of its
points, and we will use strings of inequalities such as
$x_1 < \cdots < x_n$
to indicate that points $x_1, \cdots, x_n$ on the arc component are
ordered in the stated manner.
We use the notation $(x_1 < \cdots < x_n)$ to indicate that points
$x_1, \cdots, x_n$ lie on a circle component and are encountered in
the stated order as we move around the circle in the direction given
by its chosen orientation.
We can cyclically permute the ordering of points on a circle
component, so
\begin{align}
  \label{eqn:circle-rot}
  (x_1 < x_2 < \cdots < x_n) = (x_2 < \cdots < x_n < x_1).
\end{align}
We can flip the orientation of a circle component to obtain an
equivalent description of the same planar tangle, so
\begin{align}
  \label{eqn:circle-flip}
  (x_1 < x_2 < \cdots < x_n) = (x_n < \cdots < x_2 < x_1).
\end{align}

This notation is useful for describing saddles.
The attaching sphere of a saddle $s:p \rightarrow q$ consists of two
points, which we call the \emph{attaching points} of
the saddle.
We indicate that an attaching point of the saddle $s$ attaches to the
right or left side of an oriented component using the notation $s R$
or $s L$.
For example, the planar tangles in the cube of resolutions in Figure
\ref{fig:example-T} are described as
\begin{align*}
  &T_{00}:\quad  y_0 < s_1R < s_2R < s_1L < s_2L < y_1 &
  &T_{10}:\quad  y_0 < s_1L < s_2L < s_1R < s_2L < y_1 \\
  &T_{01}:\quad  y_0 < s_1R < s_2L < s_1R < s_2R < y_1 &
  &T_{11}:\quad  y_0 < s_1L < s_2R < y_1,\,(s_1 R < s_2 R),
\end{align*}
where we have oriented the circle component in $T_{11}$
counterclockwise.
Note that the saddles $T_{10} \rightarrow T_{11}$ and
$T_{01} \rightarrow T_{11}$ split a circle component from the right
and left sides of the arc component, respectively.
Since the arc component is oriented, each saddle naturally assigns an
orientation to this circle component, but because the saddles
split the circle from opposite sides of the arc component, the two
orientations are not consistent.
This type of inconsistency is the reason we treat clockwise and
counterclockwise orientations of circle components on equal footing.

We can adapt rules (\ref{eqn:circle-rot}) and
(\ref{eqn:circle-flip}) for circle components to include the
orientation data for the attaching points of saddles:
\begin{align*}
  (x_1\alpha_1 < x_2\alpha_2 < \cdots < x_n\alpha_n) =
  (x_2\alpha_2 < \cdots < x_n\alpha_n < x_1 \alpha_1) =
  (x_n \alphabar_n < \cdots < x_2 \alphabar_2 < x_1\alphabar_1),
\end{align*}
where $\alpha_i \in \{R, L\}$ and $\alphabar_i$ denotes the opposite
orientation of $\alpha_i$, so
$\overline{R} = L$ and
$\overline{L} = R$.
Using these rules, we see that the circle component in the planar
tangle $T_{11}$ from Figure \ref{fig:example-T} could be denoted
in any of the following ways:
\begin{align*}
  (s_1 R < s_2 R) = (s_2 R < s_1 R) = (s_2 L < s_1 L) =
  (s_1 L < s_2 L).
\end{align*}

We can use this notation to describe saddles of type
$C_\pm^\alpha$, $C_\pm^C$, and $W_\pm$, as shown in Figure
\ref{fig:saddle-types}.
For a saddle $s$ of type $C_+^\alpha$, which attaches to the \emph{same}
side of the arc component at its attaching points, the saddle splits off
the segment of the arc component between its attaching points to form
a new circle component:
\begin{align}
  \label{eqn:C-alpha-p}
  &y_0 < s\alphabar < x_1\beta_1 < \cdots <
  x_n\beta_n < s\alphabar < y_1 &
  & \longrightarrow &
  &y_0 < s\alpha < y_1,\,
  (x_1\beta_1 < \cdots < x_n\beta_n < s\alpha).
\end{align}
For a saddle of type $C_-^\alpha$, we flip the direction of the arrow
in (\ref{eqn:C-alpha-p}).

For a saddle $s$ of type $C_+^C$, which attaches to the \emph{same} side
of a circle component at its attaching points, the saddle splits off
the segment of the circle component between its attaching points to
form a new circle component:
\begin{align}
  \label{eqn:C-C-p}
  &(s\alphabar < x_1\beta_1 < \cdots < x_n\beta_n < s\alphabar) &
  & \longrightarrow &
  &(s\alpha),\, (x_1\beta_1 < \cdots < x_n\beta_n < s\alpha).
\end{align}
For a saddle of type $C_-^C$, we flip the direction of the arrow
in (\ref{eqn:C-C-p}).

For a saddle $s$ of type $W_\pm$, which attaches to \emph{opposite}
sides of the arc component at its attaching points, the saddle
flips the orientation of the segment of the arc component between its
attaching points:
\begin{align*}
  &y_0 < s\alpha < x_1\beta_1 < \cdots <
  x_n\beta_n < s\alphabar < y_1 &
  & \longrightarrow &
  &y_0 < s\alphabar < x_n\betabar_n < \cdots <
  x_1\betabar_1 < s\alpha < y_1.
\end{align*}
This segment forms a single loop around the annulus, so flipping its
orientation changes the winding number by two.
Given our convention that \emph{counterclockwise} winding is
\emph{positive}, we see that
if $\alpha = R$ then $s$ \emph{lowers} the winding number by two and
hence is type $W_-$, and 
if $\alpha = L$ then $s$ \emph{raises} the winding number by two and
hence is type $W_+$.

The following two lemmas describe restrictions on the possible pairs
of saddles:

\begin{lemma}
\label{lemma:impossible-nest}
A pair of saddles $s$ and $t$ of the following form is not possible:
\begin{align*}
  s\alpha_1 < t\beta < t\betabar < s\alpha_2,
\end{align*}
\end{lemma}

\begin{proof}
If we glue the attaching points of the saddle $s$ together then the
segment between them forms a circle, and the saddle $t$ cannot attach
to points on opposite sides of this circle.
\end{proof}

\begin{lemma}
\label{lemma:impossible-pairs}
Pairs of saddles $s$ and $t$ of the following forms are not possible:
\begin{align*}
  &s\alphabar < t\alpha < s\alpha < t\alpha, &
  &s\alpha < t\alphabar < s\alphabar < t\alpha, &
  &s\alpha < t\alpha < s\alpha < t\alphabar.
\end{align*}
\end{lemma}

\begin{proof}
These planar tangles make up three corners of a square of saddles:
\begin{eqnarray*}
  \begin{tikzcd}
    s\alphabar < t\alpha < s\alpha < t\alpha
    \arrow[leftrightarrow]{d}{t}
    \arrow[leftrightarrow]{r}{s} &
    s\alpha < t\alphabar < s\alphabar < t\alpha
    \arrow[leftrightarrow]{d}{t} \\
    s\alphabar < t\alphabar,\, (s\alpha < t\alphabar)
    \arrow[leftrightarrow]{r}{s} &
    s\alpha < t\alpha < s\alpha < t\alphabar.
  \end{tikzcd}
\end{eqnarray*}
Such a square is not possible, since the saddles $s$ and $t$ connect
the arc component to opposite sides of the circle component
$(s\alpha < t\alphabar)$ in the planar tangle in the bottom left
corner.
\end{proof}

\section{Squares of saddles}
\label{sec:squares}

Recall that the differential
$\partial_{T^+} = \partial_T^0 + \partial_T^+$ is the sum of a
term $\partial_T^0$ corresponding to single saddles and a term
$\partial_T^+$ corresponding to pairs of successive saddles in the
cube of resolutions of $T$.
Each pair of successive saddles belongs to a unique commuting square
of saddles $\Box$ of the following form:
\begin{eqnarray*}
\begin{tikzcd}
  \Box_{00}
  \arrow{r}{\Box_T}
  \arrow{d}{\Box_L}
  &
  \Box_{01}
  \arrow{d}{\Box_R}
  \\
  \Box_{10}
  \arrow{r}{\Box_B} &
  \Box_{11},
\end{tikzcd}
\end{eqnarray*}
where $\Box_{00}$, $\Box_{01}$, $\Box_{10}$, and $\Box_{11}$ indicate
planar tangles at the corners of the square, and $\Box_T$, $\Box_B$,
$\Box_L$, and $\Box_R$ indicate saddles at the top, bottom, left, and
right sides of the square.
The square contains two pairs of successive saddles
$(\Box_T,\Box_R)$ and $(\Box_L,\Box_B)$, each of which contributes to
$\partial_T^+$:
\begin{align*}
  &(\Box_T,\Box_R) &
  &\rightsquigarrow &
  & \Ttilde^L(\Box_R) Q(\Box_T) +
  Q(\Box_R) \Ttilde^L(\Box_T), \\
  &(\Box_L,\Box_B) &
  &\rightsquigarrow &
  &\Ttilde^L(\Box_B) Q(\Box_L) +
  Q(\Box_B) \Ttilde^L(\Box_L).
\end{align*}
We define a map $\partial_T^+(\Box)$ that gives the net contribution
of the square $\Box$ to $\partial_T^+$ by summing the contributions
of $(\Box_T,\Box_R)$ and $(\Box_L,\Box_B)$:
\begin{align*}
  \partial_T^+(\Box) &=
  \Ttilde^L(\Box_R) Q(\Box_T) +
  Q(\Box_R) \Ttilde^L(\Box_T) +
  \Ttilde^L(\Box_B) Q(\Box_L) +
  Q(\Box_B) \Ttilde^L(\Box_L).
\end{align*}
We can then express $\partial_T^+$ as a sum over all the squares in
the cube of resolutions of $T$:
\begin{align*}
  \partial_T^+ = \sum_\Box \partial_T^+(\Box).
\end{align*}

We will show that the map $\partial_T^+(\Box)$ vanishes unless the
square $\Box$ is one of several special types.
We say that a square $\Box$ is \emph{interleaved}, or type $I_\pm$,
if it has one of the following forms:
\begin{eqnarray*}
\begin{array}{cc}
\begin{tikzcd}
  sR < tR < sL < tL
  \arrow{r}{W_-}[swap]{s}
  \arrow{d}{W_-}[swap]{t}
  &
  sL < tL < sR < tL
  \arrow{d}{C_+^R}[swap]{t}
  \\
  sR < tL < sR < tR
  \arrow{r}{C_+^L}[swap]{s} &
  sL < tR,\, (sR < tR)
\end{tikzcd}
&
\begin{tikzcd}
  sR < tL,\, (sL < tL)
  \arrow{r}{C_-^R}[swap]{s}
  \arrow{d}{C_-^L}[swap]{t} &
  sL < tR < sL < tL
  \arrow{d}{W_-}[swap]{t}
  \\
  sR < tR < sL < tR
  \arrow{r}{W_-}[swap]{s}
  &
  sL < tL < sR < tR
\end{tikzcd} \\
I_+ & I_-
\end{array}
\end{eqnarray*}
For example, the cube of resolutions in Figure \ref{fig:example-T}
is an interleaved square of type $I_+$.
We say that a square $\Box$ is \emph{nested}, or type $N_\pm^\beta$,
if it has one of the following forms:
\begin{eqnarray*}
\begin{array}{cc}
\begin{tikzcd}
  sR < t\beta < t\beta < sL
  \arrow{r}{W_-}[swap]{s}
  \arrow{d}{C_+^\betabar}[swap]{t}
  &
  sL < t\betabar < t\betabar < sR
  \arrow{d}{C_+^\beta}[swap]{t}
  \\
  sR < t\betabar < sL,\, (t\betabar)
  \arrow{r}{W_-}[swap]{s}
  &
  sL < t\beta < sR,\,(t\beta)
\end{tikzcd}
&
\begin{tikzcd}
  sR < t\betabar < sL,\, (t\betabar)
  \arrow{d}{C_-^\betabar}[swap]{t}
  \arrow{r}{W_-}[swap]{s}
  &
  sL < t\beta < sR,\,(t\beta)
  \arrow{d}{C_-^\beta}[swap]{t}
  \\
  sR < t\beta < t\beta < sL
  \arrow{r}{W_-}[swap]{s}
  &
  sL < t\betabar < t\betabar < sR
\end{tikzcd} \\
N_+^\beta & N_-^\beta
\end{array}
\end{eqnarray*}
We say that a square $\Box$ is \emph{disjoint}, or type $D$, if it has
one of the following forms:
\begin{eqnarray*}
\begin{array}{cc}
\begin{tikzcd}
  t\betabar < t\betabar < sR < sL
  \arrow{r}{W_-}[swap]{s}
  \arrow{d}{C_+^\beta}[swap]{t}
  &
  t\betabar < t\betabar < sL < sR
  \arrow{d}{C_+^\beta}[swap]{t}
  \\
  t\beta < sR < sL,\, (t\beta)
  \arrow{r}{W_-}[swap]{s}
  &
  t\beta < sL < sR,\, (t\beta)
\end{tikzcd}
&
\begin{tikzcd}
  t\beta < sR < sL,\, (t\beta)
  \arrow{r}{W_-}[swap]{s}
  \arrow{d}{C_-^\beta}[swap]{t}
  &
  t\beta < sL < sR,\, (t\beta)
  \arrow{d}{C_-^\beta}[swap]{t}
  \\
  t\betabar < t\betabar < sR < sL
  \arrow{r}{W_-}[swap]{s}
  &
  t\betabar < t\betabar < sL < sR
\end{tikzcd}
\\
\begin{tikzcd}
  sR < sL < t\betabar < t\betabar
  \arrow{r}{W_-}[swap]{s}
  \arrow{d}{C_+^\beta}[swap]{t}
  &
  sL < sR < t\betabar < t\betabar
  \arrow{d}{C_+^\beta}[swap]{t}
  \\
  sR < sL < t\beta,\, (t\beta)
  \arrow{r}{W_-}[swap]{s}
  &
  sL < sR < t\beta,\, (t\beta)
\end{tikzcd}
&
\begin{tikzcd}
  sR < sL < t\beta,\, (t\beta)
  \arrow{r}{W_-}[swap]{s}
  \arrow{d}{C_-^\beta}[swap]{t}
  &
  sL < sR < t\beta,\, (t\beta)
  \arrow{d}{C_-^\beta}[swap]{t}
  \\
  sR < sL < t\betabar < t\betabar
  \arrow{r}{W_-}[swap]{s}
  &
  sL < sR < t\betabar < t\betabar
\end{tikzcd}
\end{array}
\end{eqnarray*}

\begin{lemma}
\label{lemma:zero-unless-I-N}
We have $\partial_T^+(\Box) = 0$ unless $\Box$ is interleaved or
nested.
\end{lemma}

\begin{proof}
For a pair of successive saddles $(\Box_T,\Box_R)$ or
$(\Box_L,\Box_B)$ in a square $\Box$ to give a nonzero contribution to
$\partial_T^+(\Box)$, one saddle must be type $W_-$ and one must be
type $C_\pm^L$.
We let $s$ denote the saddle of type $W_-$ and $t$ denote the saddle
of type $C_\pm^L$.
One corner of $\Box$ must therefore contain both
$s\alpha < s\alphabar$ and $t\betabar < t\betabar$.
We enumerate the six possible orderings of the attaching points of $s$
and $t$ for this corner and classify $\Box$ for each ordering:
\begin{align*}
  & t\betabar < t\betabar < s\alpha < s\alphabar &
  & D \\
  & t\betabar < s\alpha < t\betabar < s\alphabar &
  &
  \textup{$I_-$ if $\alpha = \beta = R$,
    $I_+$ if $\alpha = \beta = L$,
    not possible if $\alpha = \betabar$
    by Lemma \ref{lemma:impossible-pairs}} \\
  & t\betabar < s\alpha < s\alphabar < t\betabar &
  & \textup{not possible by Lemma \ref{lemma:impossible-nest}} \\
  & s\alpha < t\betabar < t\betabar < s\alphabar &
  &
  \textup{$N_\pm^\betabar$ if $\alpha = R$,
    $N_\pm^\beta$ if $\alpha = L$} \\
  & s\alpha < t\betabar < s\alphabar < t\betabar &
  &
  \textup{$I_-$ if $\alpha = \betabar = R$,
    $I_+$ if $\alpha = \betabar = L$,
    not possible if $\alpha = \beta$ by Lemma
    \ref{lemma:impossible-pairs}} \\
  & s\alpha < s\alphabar < t\betabar < t\betabar &
  & D
\end{align*}
If $\Box$ is disjoint then $\partial_T^+(\Box) = 0$, since the
contributions from $(\Box_T,\Box_R)$ and $(\Box_L,\Box_B)$ cancel.
\end{proof}

\section{Proof of the main theorem}
\label{sec:proof}

We are now ready to outline the proof of Theorem
\ref{theorem:main-intro} from the Introduction, which we restate here:

\begin{theorem}
\label{theorem:main}
The chain complex $(C_{T^\pm},\partial_{T^\pm})$ is homotopy
equivalent to the chain complex for the reduced Khovanov homology of
the link diagram $T^\pm$.
\end{theorem}

We construct the homotopy equivalence using the following lemma:

\begin{lemma}[Reduction Lemma]
\label{lemma:reduction}
If $(C,\partial)$ is a chain complex such that
$C = A \oplus B \oplus B$ and
$\partial:C \rightarrow C$ has the form
\begin{align*}
  \partial =
  \left(\begin{array}{ccc}
    \partial_A & \beta_1 & \beta_2 \\
    \alpha_1 & \partial_B & 0 \\
    \alpha_2 & \id_B & \partial_B
  \end{array}\right)
\end{align*}
relative to this decomposition, then
$(A, \partial_A + \beta_1\alpha_2)$ is a chain complex homotopy
equivalent to $(C,\partial)$.
\end{lemma}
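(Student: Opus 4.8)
The plan is to exhibit explicit chain maps $F : C \to A$ and $G : A \to C$ together with a chain homotopy $H : C \to C$ realizing $GF \simeq 1_C$, and to check $FG = 1_A$ on the nose. Write a vector in $C = A \oplus B \oplus B$ as a column $(a, b_1, b_2)^{\mathrm t}$. Motivated by the fact that the lower-left $2\times 2$ block of $\partial$ has an invertible entry (the identity $1_B$ in the $(3,2)$-slot), I would set $F(a,b_1,b_2) = a$ composed with a correction that kills the "$b_2$ is determined by $b_1$" redundancy; concretely I expect $F(a, b_1, b_2) = a$ up to a term absorbing $b_2$, and $G(a) = (a, 0, \alpha_2 a)^{\mathrm t}$, so that the surviving differential on $A$ becomes $\partial_A + \beta_1 \alpha_2$ (the $\beta_2$-column feeds the $B$-summand in the second slot, which under $G$ is zero, while the $\beta_1$-column hits the third-slot image $\alpha_2 a$). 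First I would record, by expanding $\partial^2 = 0$ blockwise, the identities among $\partial_A, \partial_B, \alpha_i, \beta_i$ — in particular $\partial_B\alpha_1 + \alpha_1\partial_A = 0$ wait, more precisely the $(2,1)$, $(3,1)$, $(1,1)$ entries of $\partial^2$ give relations like $\partial_B \alpha_1 + \alpha_1 \partial_A = 0$, $\alpha_2\partial_A + \partial_B\alpha_2 + \alpha_1 = 0$, and $\partial_A^2 + \beta_1\alpha_1 + \beta_2\alpha_2 = 0$ — these are exactly what is needed to verify $(\partial_A + \beta_1\alpha_2)^2 = 0$ and that $F, G$ are chain maps.

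The cleanest route is to use a standard change of basis on $C$ first: replace the third coordinate $b_2$ by $b_2' = b_2 + \alpha_2 a + (\text{something})b_1$ chosen to clear the $(3,1)$ and to make the $(3,3)$/$(3,2)$ block into a "contractible" pair. After this change of variables $\partial$ should take the form where the subcomplex spanned by the second and third $B$-summands, with differential $\begin{pmatrix}\partial_B & 0 \\ 1_B & \partial_B\end{pmatrix}$, is visibly acyclic — indeed $\begin{pmatrix}\partial_B & 0\\ 1_B & \partial_B\end{pmatrix}$ squares to zero and has the contraction $\begin{pmatrix}0 & 1_B \\ 0 & 0\end{pmatrix}$, since $\begin{pmatrix}\partial_B & 0\\1_B & \partial_B\end{pmatrix}\begin{pmatrix}0&1_B\\0&0\end{pmatrix} + \begin{pmatrix}0&1_B\\0&0\end{pmatrix}\begin{pmatrix}\partial_B&0\\1_B&\partial_B\end{pmatrix} = \begin{pmatrix}1_B & 0\\0 & 1_B\end{pmatrix}$. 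Then Gaussian elimination / the standard "cancellation lemma" for a contractible subcomplex (deformation retract of chain complexes) yields that $C$ deformation retracts onto $A$ with induced differential $\partial_A$ plus the correction $\beta \cdot (\text{contraction}) \cdot \alpha$, which works out to $\partial_A + \beta_1\alpha_2$.

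I would present this in two stages: (1) the change-of-coordinates isomorphism of $(C,\partial)$, carried out by an explicit invertible block-triangular matrix, bringing $\partial$ into the block-anti-triangular form with a contractible $B\oplus B$ factor; (2) invoke (or prove in one line) the cancellation lemma: if $(C,\partial) = (A,\partial_A) \oplus (D, \partial_D)$ as graded vector spaces with $\partial$ block-triangular and $(D,\partial_D)$ contractible via $h_D$, then $(A, \partial_A + \pi_A \partial h_D \partial \iota_A)$ — here $\partial_A + \beta_1\alpha_2$ after the substitution — is chain-homotopy equivalent to $(C,\partial)$, with the homotopy built from $h_D$. The main obstacle I anticipate is purely bookkeeping: getting the change-of-variables matrix exactly right so that all three off-diagonal relations from $\partial^2=0$ conspire to leave precisely $\beta_1\alpha_2$ and not, say, $\beta_1\alpha_2 - \beta_2\partial_B^{?}$ — I will need to use the relation $\alpha_1 = -\alpha_2\partial_A - \partial_B\alpha_2$ to rewrite the $\beta_1\alpha_1$ term, and the identity $\partial_A^2 = -\beta_1\alpha_1 - \beta_2\alpha_2$ to confirm the squared differential vanishes. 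Everything else is routine verification that the maps I wrote down are chain maps and that the composites are $1_A$ and $1_C + \partial H + H\partial$.
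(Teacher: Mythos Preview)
Your high-level plan---write down explicit $F$, $G$, $H$ and verify $FG=1_A$, $GF=1_C+\partial H+H\partial$---is exactly what the paper does. But your concrete guess for $G$ is in the wrong slot: you put $\alpha_2 a$ in the third coordinate, reasoning that ``the $\beta_1$-column hits the third-slot image,'' but $\beta_1$ sits in the \emph{second} column of $\partial$, so $G(a)=(a,\alpha_2 a,0)^{\mathrm t}$ is what you want. With your $G$ the first component of $\partial G(a)$ is $\partial_A a+\beta_2\alpha_2 a$, not $\partial_A a+\beta_1\alpha_2 a$. The paper's maps are
\[
F=\begin{pmatrix}1_A & 0 & \beta_1\end{pmatrix},\qquad
G=\begin{pmatrix}1_A\\ \alpha_2\\ 0\end{pmatrix},\qquad
H=\begin{pmatrix}0&0&0\\0&0&1_B\\0&0&0\end{pmatrix},
\]
and the verification is a direct matrix computation using the nine relations from $\partial^2=0$ (note in particular that the $(3,2)$ entry gives $\alpha_2\beta_1+\partial_B+\partial_B=0$, hence $\alpha_2\beta_1=0$ over $\F$). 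No change of basis is needed: your proposed ``route 2'' via Gaussian elimination on a contractible $B\oplus B$ subcomplex would also work, and is conceptually the same thing, but the paper's direct three-line verification is shorter. Also bear in mind that everything here is over $\F=\F_2$, so the signs you are tracking (e.g.\ $\alpha_1=-\alpha_2\partial_A-\partial_B\alpha_2$) are irrelevant.
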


\begin{proof}
The fact that $\partial^2 = 0$ implies that
$(\partial_A + \beta_1\alpha_2)^2 = 0$, so
$(A, \partial_A + \beta_1\alpha_2)$ is a chain complex.
Define linear maps
$F:C \rightarrow A$,
$G:A \rightarrow C$ and
$H:C \rightarrow C$ by
\begin{align*}
  F &= \left(\begin{array}{ccc}
    \id_A & 0 & \beta_1
  \end{array}\right), &
  G &= \left(\begin{array}{c}
    \id_A \\
    \alpha_2 \\
    0
  \end{array}\right), &
  H &= \left(\begin{array}{ccc}
    0 & 0 & 0 \\
    0 & 0 & \id_B \\
    0 & 0 & 0
  \end{array}\right).
\end{align*}
The fact that $\partial^2 = 0$ implies that $F$ and $G$ are chain
maps.
We find
\begin{align*}
  GF &= \id_C + \partial H + H \partial, &
  FG &= \id_A,
\end{align*}
so $F$ and $G$ are homotopy equivalences.
\end{proof}

We choose a disk $D$ in the annulus $S^1 \times [0,1]$ as shown
in Figure \ref{fig:T-and-Tbar} and say that a tangle diagram $T$ is
in \emph{standard position} if all the crossings of $T$ are contained
in $D$.
Given a tangle diagram $T$ in standard position, we define the
\emph{loop number} $\ell(T)$ as the number of times $T$ crosses the
overpass arc $A_s^+$ shown in Figure \ref{fig:T-and-Tbar}.

\begin{figure}
  \centering
  \includegraphics[scale=0.5]{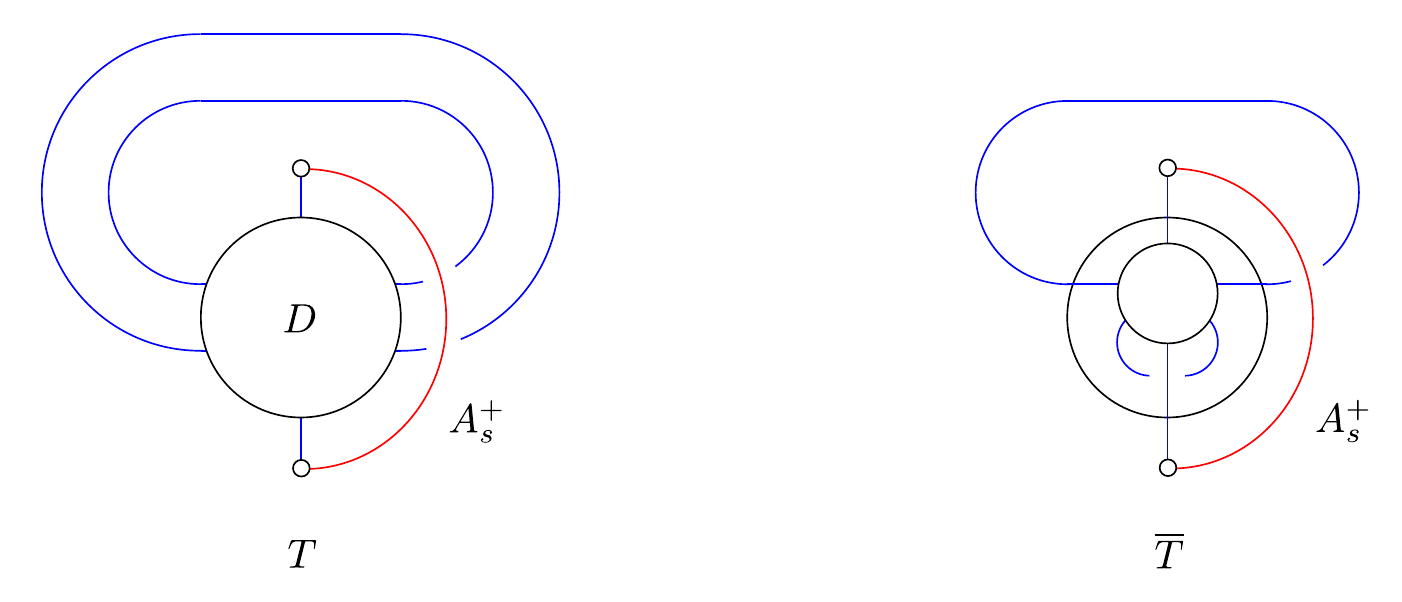}
\caption{
\label{fig:T-and-Tbar}
(Left)
A tangle diagram $T$ in standard position and the overpass arc
$A_s^+$.
All the crossings of $T$ are contained in the disk $D$.
(Right)
We flip the outermost loop of $T$ under the annulus and then perform
an isotopy of the annulus to obtain a tangle diagram $\Tbar$ in
standard position with one additional crossing.
}
\end{figure}

\begin{proof}[Proof of Theorem \ref{theorem:main}]

We prove the claim for $T^+$; the proof for $T^-$ is similar.
By performing an isotopy of the annulus and an isotopy of the overpass
arc, we can reduce to the case where the tangle diagram $T$ is in
standard position and the overpass arc is $A_s^+$.
The latter isotopy is always possible due to the condition we imposed
in Section \ref{sec:chain-complex} that the image of the overpass arc
under the projection $S^1 \times [0,1] \rightarrow S^1$ must be a
contractible loop, and the Khovanov chain complexes for the link
diagrams before and after the isotopies are homotopy equivalent.
We prove the claim by induction on the loop number $k$.

For the base case $k=0$, note that if $T$ is a tangle diagram in
standard position with loop number $\ell(T)=0$, then the chain
complex $(C_{T^+},\partial_{T^+}) = (C_{T^+},\partial_T^0)$ is the
Khovanov chain complex of $T^+ := T \cup A_s^+$ with marked point
$(b_0,0)$ or $(b_0,1)$, so the claim is trivially true.

For the induction step, assume the claim is true for each tangle
diagram $T$ in standard position with loop number $\ell(T) < k$, and
consider a tangle diagram $T$ in standard position with loop number
$\ell(T) = k$.
As shown in Figure \ref{fig:T-and-Tbar}, we can flip the outermost
loop of $T$ under the annulus and then perform an isotopy of the
annulus to obtain a tangle diagram $\Tbar$ in standard position with
one more crossing than $T$ and loop number $\ell(\Tbar) = k - 1$.
Since the link diagrams
$T^+ := T \cup A_s^+$ and $\Tbar^+ := \Tbar \cup A_s^+$ describe
isotopic links in $S^3$, the Khovanov chain complexes of $T^+$ and
$\Tbar^+$ are homotopy equivalent.
The induction hypothesis implies the Khovanov chain complex
of $\Tbar^+$ is homotopy equivalent to
$(C_{\Tbar^+},\partial_{\Tbar^+})$.
We will show that $(C_{\Tbar^+},\partial_{\Tbar^+})$ satisfies the
hypotheses of the Reduction Lemma \ref{lemma:reduction}, which we
apply to obtain a homotopy equivalent chain complex
$(C_{red},\partial_{red})$.
To complete the proof, we will show that
$(C_{red},\partial_{red}) = (C_{T^+},\partial_{T^+})$, so
$(C_{T^+},\partial_{T^+})$ is a chain complex, and the string of
homotopy equivalences implies that $(C_{T^+},\partial_{T^+})$ is
homotopy equivalent to the Khovanov chain complex of $T^+$.
\end{proof}

It remains to show that the \emph{induced chain complex}
$(C_{\Tbar^+},\partial_{\Tbar^+})$ for $\Tbar$
satisfies the hypotheses of the Reduction Lemma \ref{lemma:reduction}
and that $(C_{red},\partial_{red}) = (C_{T^+},\partial_{T^+})$.

\begin{remark}
\label{remark:pillowcase}
We can use the notion of standard position to describe an interesting
relationship between our chain complex and the symplectic
interpretation of Khovanov homology due to Hedden, Herald, Hogancamp,
and Kirk that was discussed in the Introduction.
In \cite{Hedden-3}
they consider link diagrams $T_D^0$ and $T_D^1$ obtained by closing a
2-tangle diagram $T_D$ in the disk as shown in Figure
\ref{fig:disk-tangles}.
They use the tangle diagram $T_D$ to construct an object of the
twisted Fukaya category of the character variety $R^*(S^2,4)$, which
in turn is used to construct chain complexes for the link diagrams
$T_D^0$ and $T_D^1$.
These chain complexes turn out to be precisely the Khovanov chain
complexes for $T_D^0$ and $T_D^1$.

\begin{figure}
  \centering
  \includegraphics[scale=0.5]{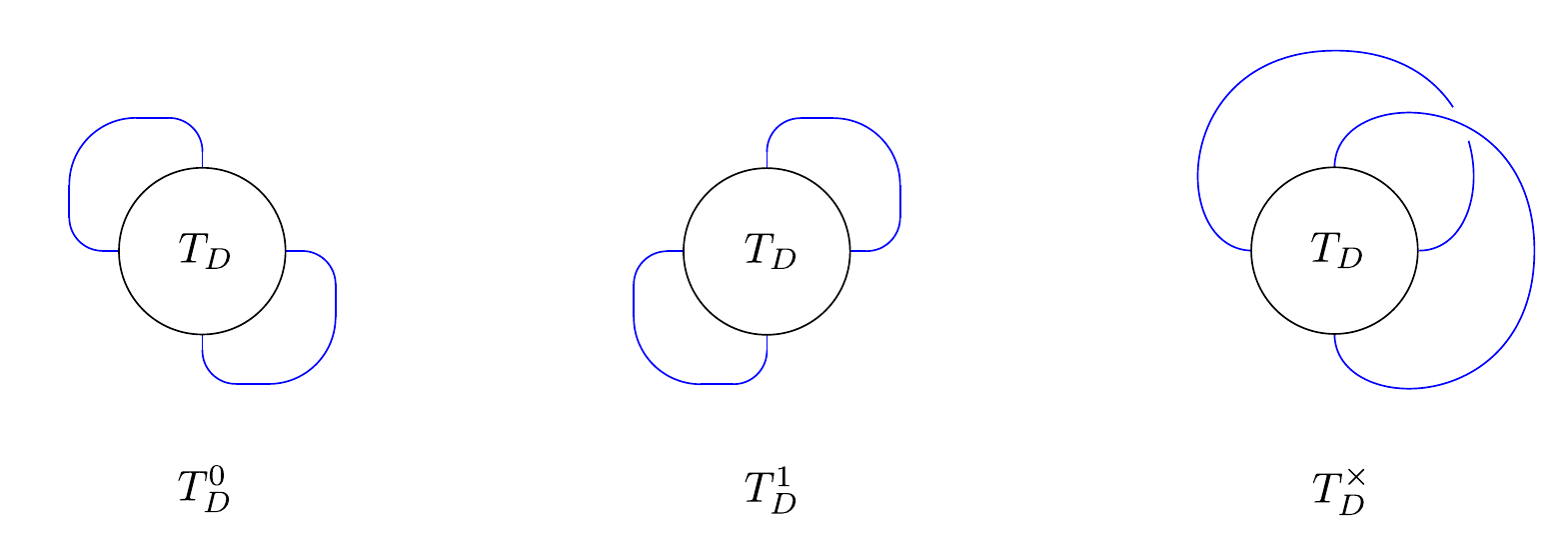}
\caption{
\label{fig:disk-tangles}
Link diagrams $T_D^0$, $T_D^1$, and $T_D^\times$ are constructed by
closing a 2-tangle diagram $T_D$ in the disk.
}
\end{figure}

Consider a 1-tangle diagram $T$ in the annulus in standard position
with loop number $\ell(T) = 1$.
If we restrict $T$ to the disk $D$ shown in Figure
\ref{fig:T-and-Tbar}, we obtain a 2-tangle diagram $T_D$ in the disk.
We close $T$ with the overpass arc $A_s^+$ shown in
Figure \ref{fig:T-and-Tbar} to obtain the link diagram
$T^+ := T \cup A_s^+$, which is identical to the link diagram
$T_D^\times$ shown in Figure \ref{fig:disk-tangles} that is obtained
by closing $T_D$ with a 2-tangle diagram with a single crossing.
In \cite{Boozer-Fukaya} we apply methods described in \cite{Hedden-3}
to construct a chain complex for $T_D^\times$ via the twisted Fukaya
category of $R^*(S^2,4)$, and we show that the resulting chain complex
is precisely our chain complex $(C_{T^+}, \partial_{T^+})$, provided
one properly assigns certain gradings to generators of the hom spaces
of the Fukaya category.
As explained in the Introduction, our chain complex
$(C_{T^+}, \partial_{T^+})$ was predicted via the Fukaya category of
$R^*(T^2,2)$.
The fact that the chain complexes for $T_D^\times$ and $T^+$
constructed via $R^*(S^2,4)$ and $R^*(T^2,2)$ are identical appears
to be due to a close relationship between the Fukaya categories of
these character varieties.
This relationship is discussed in \cite{Boozer-Fukaya}, which shows,
for example, that $R^*(S^2,4)$ is a symplectic submanifold of
$R^*(T^2,2)$.
\end{remark}

\section{Induced chain complex $(C_{\Tbar^+},\, \partial_{\Tbar^+})$}
\label{sec:induced-chain-complex}

We begin by analyzing the structure of the induced chain complex
$(C_{\Tbar^+},\, \partial_{\Tbar^+})$ for the tangle diagram $\Tbar$.
We first consider the vector space $C_{\Tbar^+}$.
For each planar tangle $p$ in the cube of resolutions for $T$, there
are two \emph{induced planar tangles} that we denote
$[p]_0$ and $[p]_1$ in the cube of resolutions for $\Tbar$, which are
obtained from $p$ by resolving the additional crossing of $\Tbar$
using the 0-resolution or 1-resolution.
Thus
\begin{align*}
  C_{\Tbar^+} &=
  \bigoplus_{i \in I} \left(C_{[T_i]_0}^+ \oplus C_{[T_i]_1}^+ \right) =
  \bigoplus_{i \in I} \Cbar_{T_i}^+,
\end{align*}
where we have grouped the vector spaces for each pair of induced
planar tangles into a single space:
\begin{align*}
  \Cbar_p^+ := C_{[p]_0}^+ \oplus C_{[p]_1}^+.
\end{align*}
For simplicity, we will initially ignore the bigrading structure of
the various vector spaces.
We denote the underlying ungraded vector space of a bigraded vector
space $V^+$ as $V$.
We will show that each vector space $\Cbar_p$ can be decomposed as
\begin{align*}
  \Cbar_p = A_p \oplus B_p \oplus B_p,
\end{align*}
where
\begin{align}
  \label{eqn:ap-cp}
  A_p = C_p
\end{align}
and either $B_p = C_p$ or $B_p = 0$.
We thus obtain a decomposition of $C_{\Tbar^+}$:
\begin{align}
  \label{eqn:C-Tbar-decomp}
  C_{\Tbar^+} &= \bigoplus_p \Cbar_p = A \oplus B \oplus B, &
  A &= \bigoplus_p A_p, &
  B &= \bigoplus_p B_p.
\end{align}

Next we consider the differential $\partial_{\Tbar^+}$ for the induced
chain complex, which is constructed from linear maps
corresponding to saddles in the cube of resolutions of $\Tbar$.
The saddles are of two different types:
\begin{itemize}
\item
For each planar tangle $p$ in the cube of resolutions of $T$, there is
a saddle $n_p:[p]_0 \rightarrow [p]_1$ in the cube of resolutions of
$\Tbar$ due to the additional crossing of $\Tbar$.
We call these \emph{ancillary saddles}.
We denote a linear map corresponding to an ancillary saddle using the
subscript $10$, as for example $Q_{10}(n_p) := Q(n_p)$.
Examples of ancillary saddles are shown in Figures
\ref{fig:ancillary-saddle-1A},
\ref{fig:ancillary-saddle-1B}, and
\ref{fig:ancillary-saddle-2}.

\item
For each saddle $s:p \rightarrow q$ in the cube of resolutions of $T$,
there are two \emph{induced saddles} 
$[s]_0:[p]_0 \rightarrow [q]_0$ and
$[s]_1:[p]_1 \rightarrow [q]_1$ in the cube of resolutions of $\Tbar$.
We denote linear maps corresponding to induced saddles $[s]_0$ and
$[s]_1$ using the subscripts $00$ and $11$, as for example
$Q_{00}(s) := Q([s]_0)$ and $Q_{11}(s) := Q([s]_1)$.
Examples of induced saddles are shown in Figure
\ref{fig:induced-saddles}.
\end{itemize}
It is useful to express $\partial_{\Tbar^+}$ as
\begin{align*}
  \partial_{\Tbar^+} :=
  \partial_\Tbar^0 + \partial_\Tbar^+ =
  \partialbar^0 + \partialbar^1 + \partialbar^2,
\end{align*}
where $\partialbar^0$, $\partialbar^1$, and $\partialbar^2$ collect
together the terms corresponding to different types of saddles.

\begin{figure}
  \centering
  \includegraphics[scale=0.5]{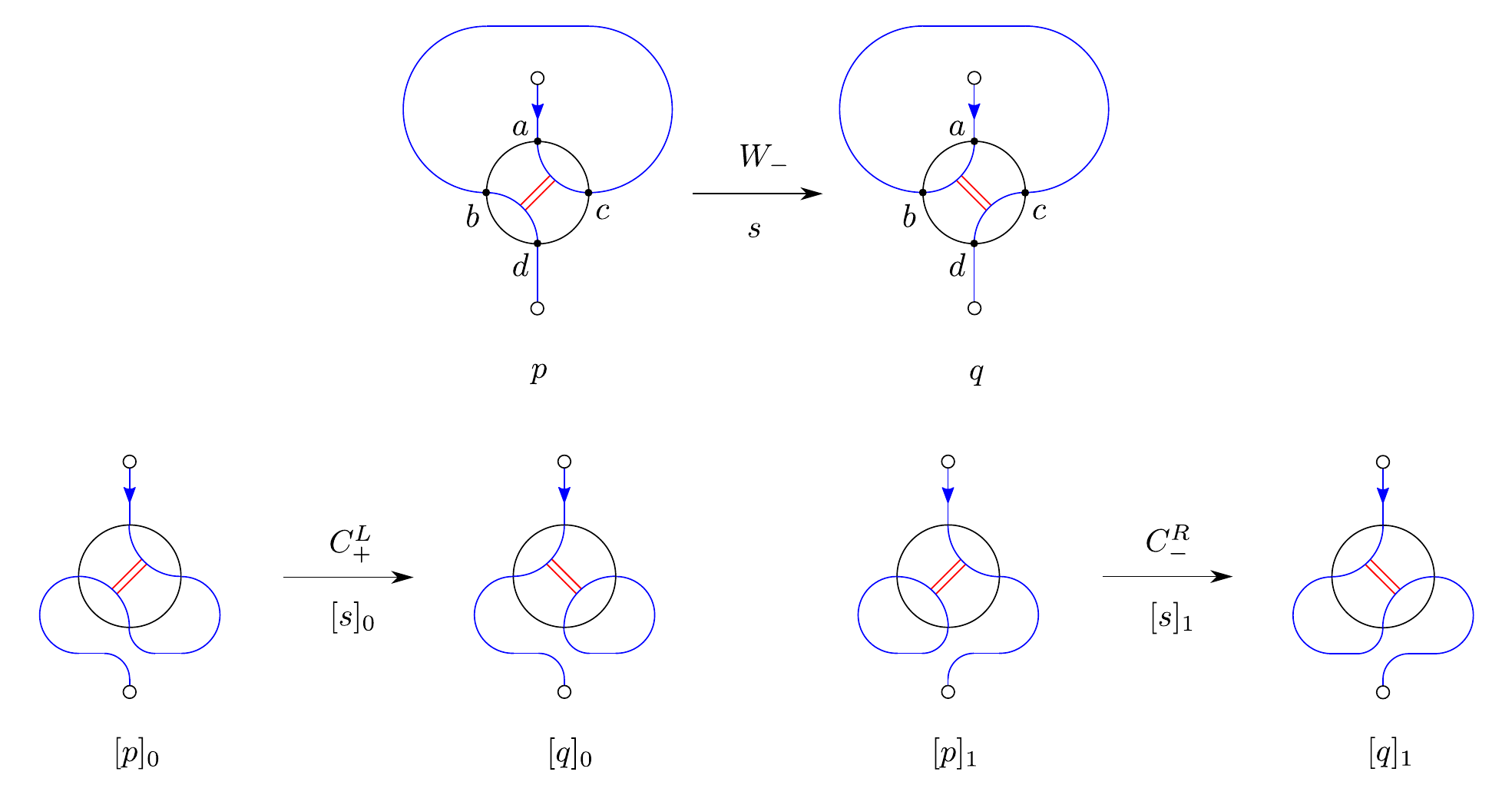}
\caption{
\label{fig:induced-saddles}
Example type $W_-$ saddle $s:p \rightarrow q$ in the cube of
resolutions of $T$ and the corresponding induced saddles
$[s]_0:[p]_0 \rightarrow [q]_0$ and
$[s]_1:[p]_1 \rightarrow [q]_1$ in the cube of resolutions of $\Tbar$.
}
\end{figure}

We define $\partialbar^0$ to be the part of $\partial_\Tbar^0$
corresponding to the ancillary saddles:
\begin{align*}
  \partialbar^0 = \sum_p T_{10}(n_p),
\end{align*}
where the sum is taken over all planar tangles in the cube of
resolutions of $T$.

Given a saddle $s:p \rightarrow q$ in the cube of resolutions of $T$,
we define a map
\begin{align}
  \label{eqn:dbar-1-s-def}
  \partialbar^1(s) =
  T_{00}(s) + T_{11}(s) +
  \Ttilde^L_{10}(n_q) Q_{00}(s) +
  Q_{10}(n_q) \Ttilde^L_{00}(s) +
  \Ttilde^L_{11}(s) Q_{10}(n_p) +
  Q_{11}(s) \Ttilde^L_{10}(n_p),
\end{align}
which collects the terms of $\partial_\Tbar^0$ corresponding to
the induced saddles of $s$, as well as the terms of $\partial_\Tbar^+$
corresponding to pairs of successive saddles, one of which is an
induced saddle of $s$ and one of which is an ancillary saddle.
We define
\begin{align*}
  \partialbar^1 = \sum_s \partialbar^1(s),
\end{align*}
where the sum is taken over all saddles in the cube of resolutions of
$T$.

Given a pair of successive saddles $(s,t)$ in the cube of resolutions
of $T$, we define a map
\begin{align}
  \label{eqn:dbar-2-t-s}
  \partialbar^2(s,t) =
  \Ttilde_{00}^L(t) Q_{00}(s) +
  Q_{00}(t)\Ttilde_{00}^L(s) +
  \Ttilde_{11}^L(t) Q_{11}(s) +
  Q_{11}(t)\Ttilde_{11}^L(s),
\end{align}
which collects the terms of $\partial_\Tbar^+$ corresponding to
pairs of successive saddles, one of which is an induced saddle of $s$
and one of which is an induced saddle of $t$.
Given a square $\Box$ in the cube of resolutions of $T$, we define a
map
\begin{align}
  \label{eqn:dbar-2-box-def}
  \partialbar^2(\Box) =
  \partialbar^2(\Box_T,\Box_R) +
  \partialbar^2(\Box_L,\Box_B),
\end{align}
which collects the terms of $\partial_\Tbar^+$ corresponding to
squares of saddles induced by $\Box$.
We define
\begin{align*}
  \partialbar^2 = \sum_\Box \partialbar^2(\Box),
\end{align*}
where the sum is taken over all squares in the cube of resolutions of
$T$.

In Section \ref{sec:saddle-0},
we compute the map $T_{10}(n_p)$ corresponding to the ancillary
saddle $n_p$ for each planar tangle $p$ in the cube of resolutions of
$T$ and show that
\begin{align}
  \label{eqn:dbar-0}
  &\partialbar^0 := \sum_p T_{10}(n_p) =
  \left(\begin{array}{ccc}
    0 & 0 & 0 \\
    0 & 0 & 0 \\
    0 & \id_B & 0
  \end{array}\right)
\end{align}
relative to the decomposition $C_{\Tbar^+} = A \oplus B \oplus B$ in
equation (\ref{eqn:C-Tbar-decomp}).
We also compute the maps $Q_{10}(n_p)$ and $\Ttilde_{10}^L(n_p)$ that
contribute to $\partialbar^1$.

In Section \ref{sec:saddle-1},
we compute the map $\partialbar^1(s)$ for each saddle $s$ in the
cube of resolutions of $T$ and show it has the form
\begin{align}
  \label{eqn:dbar-1-s-form}
  &\partialbar^1(s) =
  \left(\begin{array}{ccc}
    \partial_A(s) & \beta_1(s) & \beta_2(s) \\
    \alpha_1(s) & \partial_B(s) & 0 \\
    \alpha_2(s) & 0 & \partial_B(s)
  \end{array}\right)
\end{align}
relative to the decomposition $C_{\Tbar^+} = A \oplus B \oplus B$ in
equation (\ref{eqn:C-Tbar-decomp}).
We also compute the maps
$Q_{00}(s)$, $Q_{11}(s)$, $\Ttilde_{00}^L(s)$, and $\Ttilde_{11}^L(s)$
that contribute to $\partialbar^2$.

In Section \ref{sec:saddle-2},
we compute the map $\partialbar^2(\Box)$ for each square $\Box$ in
the cube of resolutions of $T$ and show it has the form
\begin{align}
  \label{eqn:dbar-2-box-form}
  &\partialbar^2(\Box) =
  \left(\begin{array}{ccc}
    \partial_A(\Box) & 0 & \beta_2(\Box) \\
    \alpha_1(\Box) & \partial_B(\Box) & 0 \\
    0 & 0 & \partial_B(\Box)
  \end{array}\right).
\end{align}
relative to the decomposition $C_{\Tbar^+} = A \oplus B \oplus B$ in
equation (\ref{eqn:C-Tbar-decomp}).

From equations (\ref{eqn:dbar-0}), (\ref{eqn:dbar-1-s-form}), and
(\ref{eqn:dbar-2-box-form}), it follows that $\partial_\Tbar^+$ has
the form
\begin{align*}
  \partial_{\Tbar^+} =
  \left(\begin{array}{ccc}
    \partial_A & \beta_1 & \beta_2 \\
    \alpha_1 & \partial_B & 0 \\
    \alpha_2 & \id_B & \partial_B
  \end{array}\right),
\end{align*}
so we can apply the Reduction Lemma \ref{lemma:reduction} to obtain a
chain complex $(C_{red}, \partial_{red})$.
From equation (\ref{eqn:ap-cp}) it follows that
\begin{align}
  \label{eqn:c-red-proof}
  C_{red} := A = \sum_p A_p = \sum_p C_p = C_{T^+}
\end{align}
as ungraded vector spaces, and in Appendix \ref{appendix:bigradings}
we show that equation (\ref{eqn:c-red-proof}) also holds when we
include the bigrading structure.
From equations (\ref{eqn:dbar-0}), (\ref{eqn:dbar-1-s-form}), and
(\ref{eqn:dbar-2-box-form}), it follows that
\begin{align}
  \label{eqn:d-red}
  \partial_{red} :=
  \partial_A + \beta_1\alpha_2 =
  \sum_s \partial_A(s) +
  \sum_\Box \partial_{red}(\Box),
\end{align}
where we have defined
\begin{align}
  \label{eqn:d-red-box}
  \partial_{red}(\Box) :=
  \partial_A(\Box) +
  \beta_1(\Box_R)\alpha_2(\Box_T) +
  \beta_1(\Box_B)\alpha_2(\Box_L).
\end{align}
In Section \ref{sec:saddle-1} we show
\begin{align}
  \label{eqn:d-1}
  \partial_A(s) = T(s).
\end{align}
In Section \ref{sec:saddle-2} we show
\begin{align}
  \label{eqn:d-2}
  \partial_{red}(\Box) = \partial_T^+(\Box).
\end{align}
We substitute equations (\ref{eqn:d-1}) and (\ref{eqn:d-2}) into
equation (\ref{eqn:d-red}) to obtain
\begin{align}
  \label{eqn:d-red-proof}
  \partial_{red} =
  \sum_s T(s) + \sum_\Box \partial_T^+(\Box) =
  \partial_T^0 + \partial_T^+ =
  \partial_{T^+},
\end{align}
where we have expressed $\partial_T^+$ as a sum over squares as
described in Section \ref{sec:squares}.
Equations (\ref{eqn:c-red-proof}) and (\ref{eqn:d-red-proof}) show
that $(C_{red},\partial_{red}) = (C_{T^+}, \partial_{T^+})$,
thus completing the proof of Theorem \ref{theorem:main}.

\section{Induced planar tangles and ancillary saddles}
\label{sec:saddle-0}

Given a planar tangle diagram $T$ in standard position, we let $a$ and
$d$ denote the points where $T$ intersects the disk $D$ on the top and
bottom, and we let $b$ and $c$ denote the points where the outermost
loop of $T$ intersects $D$ on the left and right.
We classify the planar tangles in the cube of resolutions of $T$ into
three types:

\begin{itemize}
\item
We say a planar tangle $p$ is type $\ptb$
if $b$ and $c$ lie on the arc component of $p$ and $b < c$, so the
outermost loop of $p$ belongs to the arc component and is oriented
clockwise.

\item
We say a planar tangle $p$ is type $\ptc$
if $b$ and $c$ lie on the arc component of $p$ and $c < b$, so the
outermost loop of $p$ belongs to the arc component and is oriented
counterclockwise.

\item
We say a planar tangle $p$ is type $\pto$
if $b$ and $c$ lie on a circle component of $p$, so the outermost loop
of $p$ belongs to a circle component.
\end{itemize}

Examples of type $\ptb$, $\ptc$, and $\pto$ planar tangles are shown
in Figures \ref{fig:ancillary-saddle-1A},
\ref{fig:ancillary-saddle-1B}, and \ref{fig:ancillary-saddle-2}.
For each planar tangle $p$ in the cube of resolutions of $T$, we
specify a decomposition $\Cbar_p = A_p \oplus B_p \oplus B_p$ and we
determine the maps $\Ttilde_{10}^L(n_p)$ and $Q_{10}(n_p)$
corresponding to the ancillary saddle $n_p:[p]_0 \rightarrow [p]_1$
that contribute to $\partialbar^1$.
We will use the notation $p(\ptb)$, $p(\ptc)$, and $p(\pto)$ to
indicate that a planar tangle $p$ in the cube of resolutions of $T$ is
type $\ptb$, $\ptc$, or $\pto$.

\subsection{Type $\ptb$ planar tangles}

A planar tangle $p$ of type $\ptb$ has the form $a < b < c < d$, as
shown in Figure \ref{fig:ancillary-saddle-1A}.
The ancillary saddle $n_p:[p]_0 \rightarrow [p]_1$
merges a circle with the left side of the arc component of $[p]_0$,
and is thus type $C_-^L$:
\begin{align*}
  & n_p:[p]_0 \rightarrow [p]_1: &
  & a < b < n_p L,\, (d < c < n_p L) &
  & \longrightarrow &
  & a < b < n_p R < d < c < n_p R.
\end{align*}
So as ungraded vector spaces we have
\begin{align*}
  & C_{[p]_0} = C_p \otimes A^{dc}, &
  & C_{[p]_1} = C_p,
\end{align*}
where $C_p = A^{\otimes c(p)}$ and $c(p)$ is the circle number of $p$.
The superscript on $A^{dc}$ indicates that this factor corresponds to
the additional circle component of $[p]_0$ that contains points $d$
and $c$.
We decompose $\Cbar_p$ as
\begin{align*}
  \Cbar_p := C_{[p]_0} \oplus C_{[p]_1} =
  A_p \oplus B_p^1 \oplus B_p^2,
\end{align*}
where
\begin{align}
  \label{eqn:decomp-1A}
  & A_p := C_p = C_p \otimes x \subset C_{[p]_0}, &
  & B_p^1 := C_p = C_p \otimes e \subset C_{[p]_0}, &
  & B_p^2 := C_p = C_{[p]_1}.
\end{align}
The superscripts on $B_p^1$ and $B_p^2$ indicate the ordering of
these summands in the decomposition of $\Cbar_p$.
In the definitions of $A_p$ and $B_p^1$, we have used the fact that
$C_p$ is canonically isomorphic to the subspaces $C_p \otimes x$ and
$C_p \otimes e$ of $C_{[p]_0} = C_p \otimes A^{dc}$.

\begin{figure}
  \centering
  \includegraphics[scale=0.5]{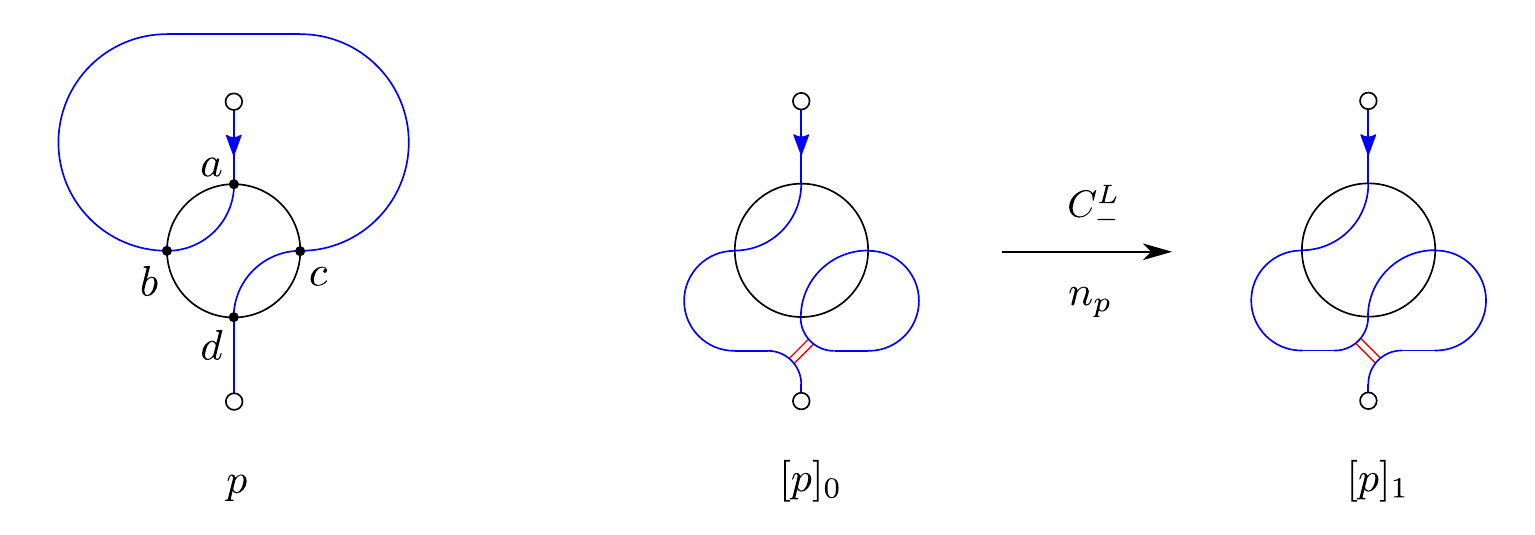}
\caption{
\label{fig:ancillary-saddle-1A}
Example type $\ptb$ planar tangle $p$ in the cube of resolutions of $T$
and the corresponding induced planar tangles $[p]_0$ and $[p]_1$ and
ancillary saddle $n_p:[p]_0 \rightarrow [p]_1$ in the cube of resolutions
of $\Tbar$.
}
\end{figure}

\begin{lemma}
\label{lemma:1A}
For a planar tangle $p$ of type $\ptb$, the ancillary saddle
$n_p:[p]_0 \rightarrow [p]_1$ contributes the maps
\begin{align*}
  T_{10}(n_p) =
  \left(\begin{array}{ccc}
    0 & 0 & 0 \\
    0 & 0 & 0 \\
    0 & \id_{C_p} & 0
  \end{array}\right), &
  &\Ttilde_{10}^L(n_p) =
  \left(\begin{array}{ccc}
    0 & 0 & 0 \\
    0 & 0 & 0 \\
    \id_{C_p} & 0 & 0
  \end{array}\right).
\end{align*}
\end{lemma}

\begin{proof}
Recall that the ancillary saddle $n_p$ is type $C_-^L$.
The corresponding linear maps
$T_{10}(n_p)$ and $\Ttilde_{10}^L(n_p)$
are nonzero only in the block
$C_{[p]_0} = C_p \otimes A^{dc} \rightarrow C_{[p]_1} = C_p$,
and their restrictions to that block are
$\id_{C_p} \otimes \epsilondot$
and $\id_{C_p} \otimes \epsilon$, respectively.
From the definitions of $A_p$, $B_p^1$, and $B_p^2$ in equation
(\ref{eqn:decomp-1A}), it follows that
$\id_{C_p} \otimes \epsilondot$ is the identity map $\id_{C_p}$ from
$B_p^1 = C_p$ to $B_p^2 = C_p$, and
$\id_{C_p} \otimes \epsilon$ is the identity map $\id_{C_p}$ from
$A_p = C_p$ to $B_p^2 = C_p$.
\end{proof}

\subsection{Type $\ptc$ planar tangles}

A planar tangle $p$ of type $\ptc$ has the form $a < c < b < d$, as
shown in Figure \ref{fig:ancillary-saddle-1B}.
The ancillary saddle $n_p:[p]_0 \rightarrow [p]_1$ splits a circle
from the right side of the arc component of $[p]_0$, and is thus type
$C_+^R$:
\begin{align*}
  & n_p:[p]_0 \rightarrow [p]_1: &
  & a < c < n_p L < d < b < n_p L &
  & \longrightarrow &
  & a < c < n_p R,\, (d < b < n_p R).
\end{align*}
So as ungraded vector spaces we have
\begin{align*}
  & C_{[p]_0} = C_p, &
  & C_{[p]_1} = C_p \otimes A^{db},
\end{align*}
where $C_p = A^{\otimes c(p)}$ and $c(p)$ is the circle number of $p$.
The superscript on $A^{db}$ indicates that this factor corresponds to
the additional circle component of $[p]_1$ that contains points $d$ and
$b$.
We decompose $\Cbar_p$ as
\begin{align*}
  \Cbar_p := C_{[p]_0} \oplus C_{[p]_1} =
  A_p \oplus B_p^1 \oplus B_p^2,
\end{align*}
where
\begin{align}
  \label{eqn:decomp-1B}
  & A_p := C_p = C_p \otimes e \subset C_{[p]_1}, &
  & B_p^1 := C_p = C_{[p]_0}, &
  & B_p^2 := C_p = C_p \otimes x \subset C_{[p]_1}.
\end{align}
The superscripts on $B_p^1$ and $B_p^2$ indicate the ordering of
these summands in the decomposition of $\Cbar_p$.
In the definitions of $A_p$ and $B_p^2$, we have used the fact that
$C_p$ is canonically isomorphic to the subspaces
$C_p \otimes x$ and $C_p \otimes e$ of $C_{[p]_1} = C_p \otimes A^{db}$.

\begin{figure}
  \centering
  \includegraphics[scale=0.5]{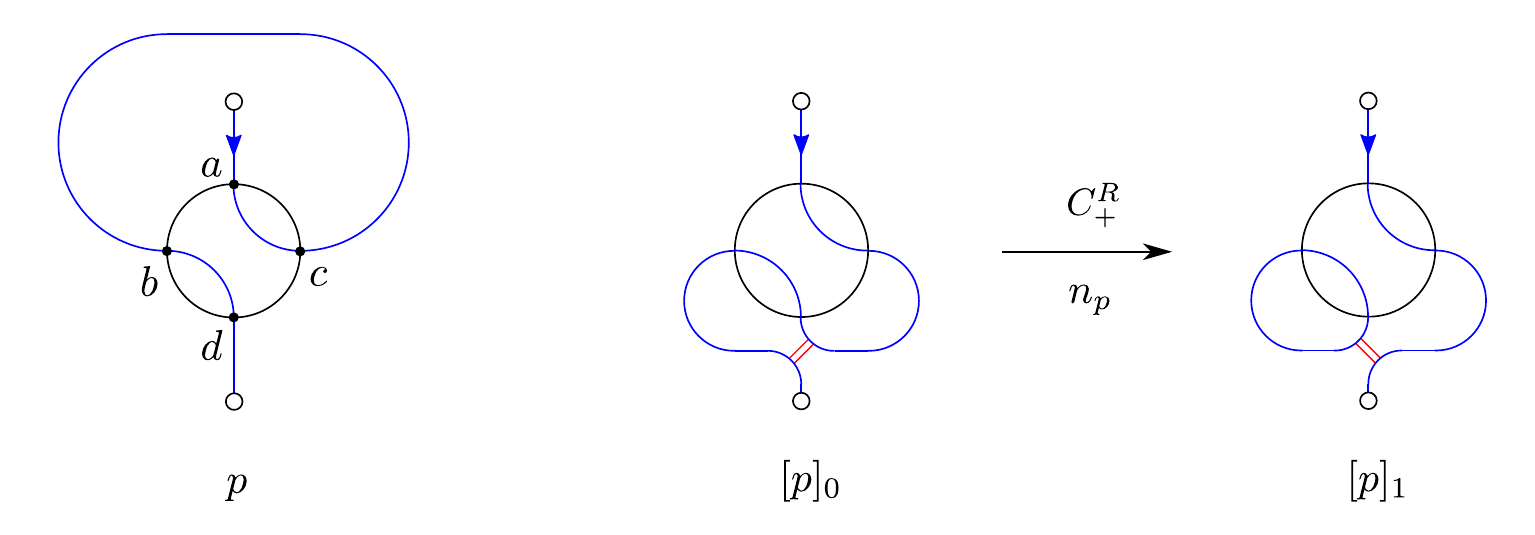}
\caption{
\label{fig:ancillary-saddle-1B}
Example type $\ptc$ planar tangle $p$ in the cube of resolutions of $T$
and the corresponding induced planar tangles $[p]_0$ and $[p]_1$ and
ancillary saddle $n_p:[p]_0 \rightarrow [p]_1$ in the cube of resolutions
of $\Tbar$.
}
\end{figure}

\begin{lemma}
\label{lemma:1B}
For $p$ of type $\ptc$, the ancillary saddle $n_p$ contributes the map
\begin{align*}
  T_{10}(n_p) =
  \left(\begin{array}{ccc}
    0 & 0 & 0 \\
    0 & 0 & 0 \\
    0 & \id_{C_p} & 0
  \end{array}\right).
\end{align*}
\end{lemma}

\begin{proof}
Recall that the ancillary saddle $n_p$ is type $C_+^R$.
The corresponding linear map
$T_{10}(n_p)$
is nonzero only in the block
$C_{[p]_0} = C_p \rightarrow C_{[p]_1} = C_p \otimes A^{db}$,
and its restriction to that block is
$\id_{C_p} \otimes \etadot$.
From the definitions of $A_p$ and $B_p^1$ in equation
(\ref{eqn:decomp-1A}), it follows that
$\id_{C_p} \otimes \etadot$ is the identity map $\id_{C_p}$ from
$B_p^1 = C_p$ to $B_p^2 = C_p$.
\end{proof}

\subsection{Type $\pto$ planar tangles}

A planar tangle $p$ of type $\pto$ has the form $a < d,\,(b < c)$, as
shown in Figure \ref{fig:ancillary-saddle-2}.
The ancillary saddle $n_p$ lowers the winding number by two, and is
thus type $W_-$:
\begin{align*}
  & n_p:[p]_0 \rightarrow [p]_1: &
  & a < d < n_p R < c < b < n_p L &
  & \longrightarrow &
  & a < d < n_p L < b < c < n_p R.
\end{align*}
If $p$ has circle number $c(p) = r + 1$ then $[p]_0$ and $[p]_1$ have
circle number $c([p]_0) = c([p]_1) = r$.
We let $A^{bc}$ denote the vector space factor corresponding to the
additional circle component of $p$ that contains $b$ and $c$, and we
define a vector space $W_p = A^{\otimes r}$ corresponding to the
remaining circle components of $p$.
As ungraded vector spaces, we have
\begin{align*}
  & C_p = W_p \otimes A^{bc}, &
  & C_{[p]_0} = W_p = W_p \otimes e \subset C_p, &
  & C_{[p]_1} = W_p = W_p \otimes x \subset C_p.
\end{align*}
We have used the fact that $W_p$ is canonically isomorphic to the
subspaces $W_p \otimes e$ and $W_p \otimes e$ of
$W_p \otimes A^{bc}$ to identify $C_{[p]_0}$ and $C_{[p]_1}$ with
subspaces of $C_p$.
We decompose $\Cbar_p$ as
\begin{align*}
  \Cbar_p := C_{[p]_0} \oplus C_{[p]_1} =
  A_p \oplus B_p^1 \oplus B_p^2,
\end{align*}
where
\begin{align}
  \label{eqn:decomp-2}
  & A_p := C_p = W_p \otimes A^{bc}, &
  & B_p^1 := 0, &
  & B_p^2 := 0.
\end{align}
Since we are viewing $C_{[p]_0}$ and $C_{[p]_1}$ as subspaces of
$C_p = A_p$, we can define a map
\begin{align*}
  &\id_{C_{[p]_1} C_{[p]_0}}:A_p \rightarrow A_p, &
  \id_{C_{[p]_1} C_{[p]_0}} = \id_{W_p} \otimes \id_{xe}.
\end{align*}
We extend $\id_{C_{[p]_1} C_{[p]_0}}$ by zero to obtain a map
$A \rightarrow A$, where $A = \bigoplus_p A_p$.

\begin{figure}
  \centering
  \includegraphics[scale=0.5]{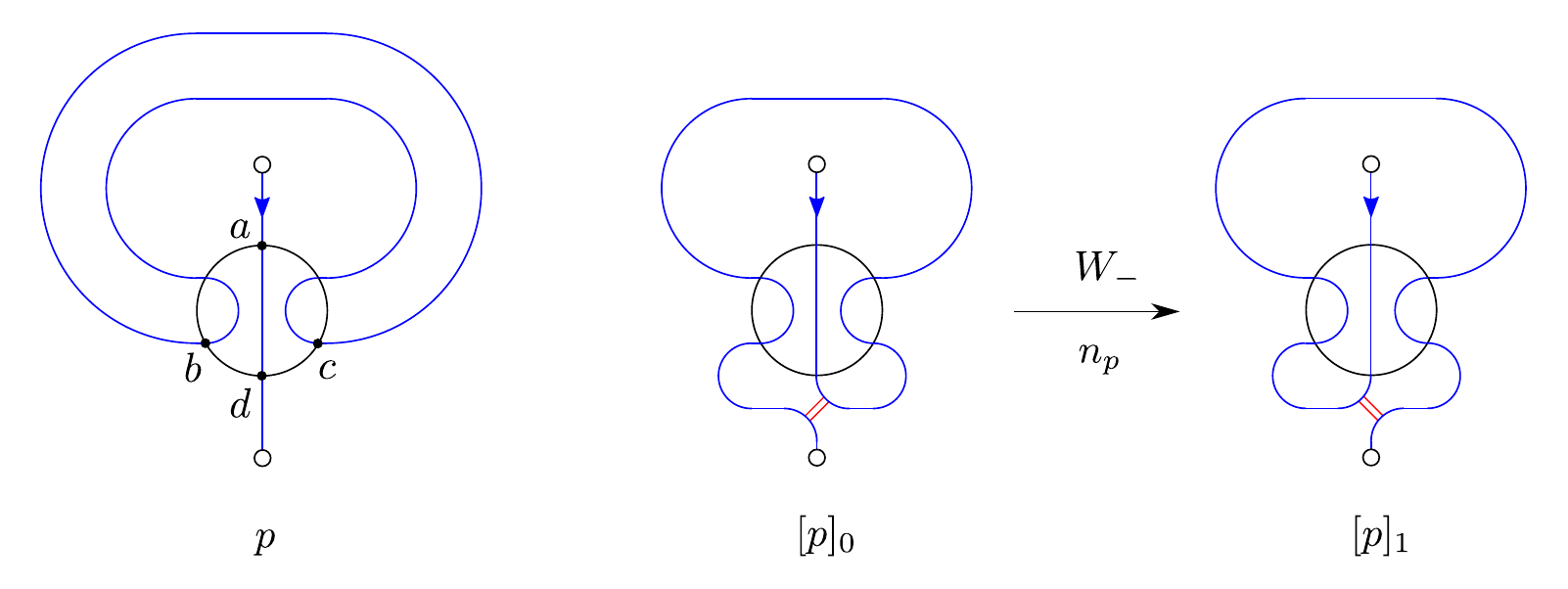}
\caption{
\label{fig:ancillary-saddle-2}
Example type $\pto$ planar tangle $p$ in the cube of resolutions of $T$
and the corresponding induced planar tangles $[p]_0$ and $[p]_1$ and
ancillary saddle $n_p:[p]_0 \rightarrow [p]_1$ in the cube of resolutions
of $\Tbar$.
}
\end{figure}

\begin{lemma}
\label{lemma:2}
For a planar tangle $p$ of type $\pto$, the ancillary saddle
$n_p:[p]_0 \rightarrow [p]_1$ contributes the map
\begin{align*}
  Q_{10}(n_p) =
  \left(\begin{array}{ccc}
    \id_{C_{[p]_1} C_{[p]_0}} & 0 & 0 \\
    0 & 0 & 0 \\
    0 & 0 & 0
  \end{array}\right).
\end{align*}
\end{lemma}

\begin{proof}
Recall that the ancillary saddle $n_p$ is of type $W_-$.
The corresponding linear map $Q_{10}(n_p)$ is nonzero only in
the block
$C_{[p]_0} = W_p \otimes e \rightarrow C_{[p]_1} = W_p \otimes x$,
and its restriction to that block is
$\id_{W_p} \otimes \id_{xe} = \id_{C_{[p]_1 [p]_0}}$.
The result now follows from the definitions of $A_p$, $B_p^1$, and
$B_p^2$ in equation (\ref{eqn:decomp-2}).
\end{proof}

\subsection{Summary}

Lemmas \ref{lemma:1A}, \ref{lemma:1B}, and \ref{lemma:2}, together
with the definitions of $A_p$, $B_p^1$, and $B_p^2$ in equations
(\ref{eqn:decomp-1A}), (\ref{eqn:decomp-1B}), and
(\ref{eqn:decomp-2}), prove

\begin{lemma}
We have
\begin{align*}
  &\partialbar^0 := \sum_p T_{10}(n_p) =
  \left(\begin{array}{ccc}
    0 & 0 & 0 \\
    0 & 0 & 0 \\
    0 & \id_B & 0
  \end{array}\right).
\end{align*}
\end{lemma}

\section{Terms $\partialbar^1(s)$}
\label{sec:saddle-1}

For each saddle $s:p\rightarrow q$ in the cube of resolutions of $T$,
we compute the map $\partialbar^1(s)$ defined in equation
(\ref{eqn:dbar-1-s-def}), which collects the terms of
$\partial_\Tbar^0$ corresponding to the induced saddles of $s$, as
well as the terms of $\partial_\Tbar^+$ corresponding to pairs of
successive saddles, one of which is an induced saddle of $s$ and one
of which is an ancillary saddle.
The form of the map $\partialbar^1(s)$ depends on the relative
positions of the points $b$ and $c$ and the attaching points of
$s$, so we divide the saddle types $W_\pm$,
$C_\pm^\beta$, and $C_\pm^C$ into subtypes according to the ordering
of these points as described in Table \ref{table:saddle-subtypes}.
The following three lemmas describe restrictions on the subtypes of
saddles that are actually possible:

\begin{table}
\begin{align*}
  &W1_\pm: &
  &x_1 < x_2 < sL < sR &
  &\longleftrightarrow &
  &x_1 < x_2 < sR < sL &
  & \textup{(not possible by Lemma \ref{lemma:w1-impossible})} \\
  &W2_\pm: &
  &sL < x_1 < x_2 < sR &
  &\longleftrightarrow &
  &sR < x_2 < x_1 < sL &
  & \textup{(restricted by Lemma \ref{lemma:w2-impossible})} \\
  &W3_\pm: &
  &sL < sR < x_1 < x_2 &
  &\longleftrightarrow &
  &sR < sL < x_1 < x_2 \\
  &W4_\pm: &
  &sL < sR,\,(x_1 < x_2) &
  &\longleftrightarrow &
  &sR < sL,\,(x_1 < x_2) \\
  \\
  &C1_\pm^\beta: &
  &x_1 < x_2 < s\betabar < s\betabar &
  &\longleftrightarrow &
  &x_1 < x_2 < s\beta,\,(s\beta) \\
  &C2_\pm^\beta: &
  &s\betabar < x_1 < x_2 < s\betabar &
  &\longleftrightarrow &
  &s\beta,\,(x_1 < x_2 < s\beta) &
  & \textup{(restricted by Lemma \ref{lemma:c2-impossible})} \\
  &C3_\pm^\beta: &
  &s\betabar < s\betabar < x_1 < x_2 &
  &\longleftrightarrow &
  &s\beta < x_1 < x_2,\,(s\beta) \\
  &C4_\pm^\beta: &
  &s\betabar < s\betabar,\,(x_1 < x_2) &
  &\longleftrightarrow &
  &s\beta,\,(s\beta),\,(x_1 < x_2) \\
  \\
  &C1_\pm^C: &
  &(x_1 < x_2 < s\betabar < s\betabar) &
  &\longleftrightarrow &
  &(s\beta),\,(x_1 < x_2 < s\beta) \\
  &C2_\pm^C: &
  &(x_1 < x_2),\, (s\betabar < s\betabar) &
  &\longleftrightarrow &
  &(x_1 < x_2),\, (s\beta),\, (s\beta) \\
  &C3_\pm^C: &
  &x_1 < x_2,\,(s\betabar < s\betabar) &
  &\longleftrightarrow &
  &x_1 < x_2,\,(s\beta),\,(s\beta).
\end{align*}
\caption{
\label{table:saddle-subtypes}
Saddle subtypes.
Subscripts $+$ and $-$ correspond to saddles
$p \rightarrow q$ and $p \leftarrow q$.
The pair $(x_1,x_2)$ is either $(b,c)$ or $(c,b)$.
Saddles of type $W1_\pm$ are not possible by Lemma
\ref{lemma:w1-impossible}, and saddles of type $W2_\pm$ and
$C2_\pm^\beta$ are restricted by Lemmas
\ref{lemma:w2-impossible} and \ref{lemma:c2-impossible}.
}
\end{table}

\begin{lemma}
\label{lemma:w1-impossible}
Type $W1_\pm$ saddles are not possible.
\end{lemma}

\begin{proof}
We will prove the claim for a type $W1_\pm$ saddle
$s:p(\ptb) \rightarrow q(\ptb)$.
We have
\begin{align*}
  & s:p \rightarrow q: &
  & a < b < c < s\alpha < s\alphabar < d &
  & \longrightarrow &
  & a < b < c < s\alphabar < s\alpha < d \\
  & [s]_0:[p]_0 \rightarrow [q]_0: &
  & a < b,\, (c < s\alpha < s\alphabar < d) &
  & \longrightarrow &
  & a < b,\, (c < s\alphabar < s\alpha < d).
\end{align*}
So the saddle $[s]_0$ attaches to opposite sides of the circle
component $(c < s\alpha < s\alphabar < d)$ in $[p]_0$, which is not
possible.
The proof for $s:p(\ptc) \rightarrow q(\ptc)$ is similar.
\end{proof}

\begin{lemma}
\label{lemma:w2-impossible}
Type $W2_+$ saddles $p(\ptc) \rightarrow q(\ptb)$ and
type $W2_-$ saddles $p(\ptb) \rightarrow q(\ptc)$
are not possible.
\end{lemma}

\begin{proof}
A type $W2_+$ saddle $p \rightarrow q$ flips the orientation of the
outermost loop of $p$ from clockwise to counterclockwise, so $p$ must
be type $\ptb$.
A type $W2_-$ saddle $p \rightarrow q$ flips the orientation of the
outermost loop of $p$ from counterclockwise to clockwise, so $p$ must
be type $\ptc$.
\end{proof}

\begin{lemma}
\label{lemma:c2-impossible}
Type $C2_+^L$ saddles $p(\ptc) \rightarrow q(\pto)$,
type $C2_-^L$ saddles $p(\pto) \rightarrow q(\ptc)$,
type $C2_+^R$ saddles $p(\ptb) \rightarrow q(\pto)$, and
type $C2_-^R$ saddles $p(\pto) \rightarrow q(\ptb)$
are not possible.
\end{lemma}

\begin{proof}
For a type $C2_+^\beta$ saddle $s:p \rightarrow q$, the planar tangle
$q$ has a circle component $(x_1 < x_2 < s\beta)$.
The saddle $s$ must attach to the \emph{outside} of this circle, so
$\beta = L$ if the circle is oriented clockwise and $\beta = R$ if the
circle is oriented counterclockwise.
The points $x_1$ and $x_2$ lie on the boundary of the disk $D$ and the
attaching point $s\beta$ lies inside the disk $D$, so if the circle
is oriented clockwise then $(x_1 < x_2 < s\beta) = (b < c < sL)$ and
if the circle is oriented counterclockwise then
$(x_1 < x_2 < s\beta) = (c < b < sR)$.
The proof for a type $C2_-^\beta$ saddle is similar.
\end{proof}

For each of the saddle subtypes in Table \ref{table:saddle-subtypes},
we compute the types of the corresponding induced saddles.
For example, consider a type $W2_-$ saddle
$s:p(\ptc) \rightarrow q(\ptb)$, as shown in Figure
\ref{fig:induced-saddles}:
\begin{align*}
  & s:p \rightarrow q: &
  & a < sR < c < b < sL < d&
  & \longrightarrow &
  & a < sL < b < c < sR < d \\
  & [s]_0:[p]_0 \rightarrow [q]_0: &
  & a < sR < c < d < sR < b &
  & \longrightarrow &
  & a < sL < b,\, (c < d < sL) \\
  & [s]_1:[p]_1 \rightarrow [q]_1: &
  & a < sR < c,\, (b < d < sR) &
  & \longrightarrow &
  & a < sL < b < d < sL < c.
\end{align*}
We see that $[s]_0$ is type $C_+^L$ and $[s]_1$ is type $C_-^R$.
We perform similar calculations for the remaining saddle subtypes and
display the results in Table \ref{table:summary-saddles}.

\begin{table}
\begin{align*}
  \begin{array}{lllllll}
    s & p \rightarrow q & [s]_0 & [s]_1 &
    \partial_A(s) & \beta_1(s) & \alpha_2(s) \\
    W2_- & p(\ptc) \rightarrow q(\ptb) & C_+^L & C_-^R &
    0 & Q(s) & Q(s) \\
    W2_+ & p(\ptb) \rightarrow q(\ptc) & C_-^L & C_+^R &    
    0 & 0 & 0 \\
    \\
    W3_\pm & p(\ptb) \rightarrow q(\ptb) & W_\pm & W_\pm & 0 & 0 & 0 \\
    W3_\pm & p(\ptc) \rightarrow q(\ptc) & W_\pm & W_\pm & 0 & 0 & 0 \\
    \\
    W4_\pm & p(\pto) \rightarrow q(\pto) & W_\pm & W_\pm & 0 & 0 & 0 \\
    \\
    C1_\pm^\beta & p(\ptb) \rightarrow q(\ptb) & C_\pm^C & C_\pm^\betabar &
    T(s) & \Ttilde(s) & 0 \\
    C1_\pm^\beta & p(\ptc) \rightarrow q(\ptc) & C_\pm^\betabar & C_\pm^C &
    T(s) & 0 & \Ttilde(s) \\
    \\
    C2_+^L & p(\ptb) \rightarrow q(\pto) & C_-^R & W_- &
    T(s) &
    \Ttilde^L(s) & 0 \\
    C2_-^L & p(\pto) \rightarrow q(\ptb) & C_+^R & W_+ &
    T(s) & 0 & 0 \\
    C2_+^R & p(\ptc) \rightarrow q(\pto) & W_+ & C_-^L &
    T(s) & 0 & 0 \\
    C2_-^R & p(\pto) \rightarrow q(\ptc) & W_- & C_+^L &
    T(s) & 0 &
    \Ttilde^R(s) \\
    \\
    C3_\pm^\beta & p(\ptb) \rightarrow q(\ptb) &
    C_\pm^\beta & C_\pm^\beta & T(s) & 0 & 0 \\
    C3_\pm^\beta & p(\ptc) \rightarrow q(\ptc) &
    C_\pm^\beta & C_\pm^\beta & T(s) & 0 & 0 \\
    \\
    C4_\pm^\beta & p(\pto) \rightarrow q(\pto) &
    C_\pm^\beta & C_\pm^\beta & T(s) & 0 & 0 \\
    \\
    C1_\pm^C & p(\pto) \rightarrow q(\pto) &
    C_\pm^\sigma & C_\pm^\sigmabar & T(s) & 0 & 0 \\
    \\
    C2_\pm^C & p(\pto) \rightarrow q(\pto) &
    C_\pm^C & C_\pm^C & T(s) & 0 & 0 \\
    \\
    C3_\pm^C & p(\ptb) \rightarrow q(\ptb) &
    C_\pm^C & C_\pm^C & T(s) & 0 & 0 \\
    C3_\pm^C & p(\ptc) \rightarrow q(\ptc) &
    C_\pm^C & C_\pm^C & T(s) & 0 & 0
  \end{array}
\end{align*}
\caption{
\label{table:summary-saddles}
Induced saddle types.
For each saddle $s:p \rightarrow q$, we list the types of the
induced saddles $[s]_0$ and $[s]_1$ and the blocks
$\partial_A(s)$, $\beta_1(s)$, and $\alpha_2(s)$ of
$\partialbar^1(s)$.
}
\end{table}

Using the types of the induced saddles summarized in Table
\ref{table:summary-saddles}, we compute the map $\partialbar^1(s)$
described in equation (\ref{eqn:dbar-1-s-def}) for each subtype of
saddle.
We verify that $\partialbar^1(s)$ has the form given in equation
(\ref{eqn:dbar-1-s-form}) and we read off the blocks $\partial_A(s)$,
$\beta_1(s)$, and $\alpha_2(s)$.
These calculations are straightforward but somewhat lengthy, so we
postpone these calculations to Appendix
\ref{appendix:induced-saddles}
and summarize the results in Table \ref{table:summary-saddles}.
The expressions for $\partial_A(s)$ in Table
\ref{table:summary-saddles} prove

\begin{lemma}
For each saddle $s$ in the cube of resolutions of $T$, the map
$\partialbar^1(s)$ has the form given in equation
(\ref{eqn:dbar-1-s-form}) with
$\partial_A(s) = T(s)$.
\end{lemma}

\section{Squares of induced saddles}
\label{sec:saddle-2}

Recall that in equation (\ref{eqn:dbar-2-box-def}) we defined a map
$\partialbar^2(\Box)$, which collects the terms of
$\partial_{\Tbar}^+$ corresponding to pairs of successive saddles that
are both induced by saddles in $\Box$.
In this section we prove:

\begin{lemma}
\label{lemma:d-red-box}
For each square $\Box$ in the cube of resolutions of $T$, the map
$\partialbar^2(\Box)$ has the form given in equation
(\ref{eqn:dbar-2-box-form}), and we have
\begin{align*}
  \partial_{red}(\Box) :=
  \partial_A(\Box) +
  \beta_1(\Box_R)\alpha_2(\Box_T) +
  \beta_1(\Box_B)\alpha_2(\Box_L) =  
  \partial_T^+(\Box).
\end{align*}
\end{lemma}

We consider separately the cases in which the square is interleaved or
nested and cases in which it is not interleaved or nested.

\subsection{Squares that are interleaved or nested}

The form of the map $\partialbar^2(\Box)$ depends on the relative
positions of the points $b$ and $c$ and the attaching points of the
saddles that make up the square, so we divide interleaved and nested
squares into subtypes according to the ordering of these points as
described in Tables \ref{table:I-types} and \ref{table:N-types}.
The squares marked $-$ are not possible,
since they would contain a saddle of type $W1_-$, which by Lemma
\ref{lemma:w1-impossible} is not possible.
In enumerating these subtypes we have also
applied Lemmas \ref{lemma:w2-impossible} and
\ref{lemma:c2-impossible}, which impose restrictions on the possible
saddles of type $W2_\pm$ and $C2_\pm^\beta$.

\begin{table}
\begin{align*}
  & I_+ & 
  & \Box_{00} &
  & &
  & I_- &
  & \Box_{11} \\
  & - &
  & c < b < sR < tR < sL < tL &
  & &
  & - &  
  & b < c < sL < tL < sR < tR & \\
  & - &
  & sR < c < b < tR < sL < tL &
  & &
  & - &
  & sL < b < c < tL < sR < tR & \\
  & I3_+: &
  & sR < tR < c < b < sL < tL &
  & &
  & I3_-: &
  & sL < tL < b < c < sR < tR & \\
  & I4_+: &
  & sR < tR < sL < c < b < tL &
  & &
  & I4_-: &
  & sL < tL < sR < b < c < tR & \\
  & I5_+: &
  & sR < tR < sL < tL < x_1 < x_2 &
  & &
  & I5_-: &
  & sL < tL < sR < tR < x_1 < x_2 & \\
  & I6_+: &
  & sR < tR < sL < tL,\,(b < c) &
  & &
  & I6_-: &
  & sL < tL < sR < tR,\,(b < c)
\end{align*}
\caption{
\label{table:I-types}
Subtypes of interleaved squares.
For $I_+$ we define subtypes according to the form of the planar
tangle $\Box_{00}$.
For $I_-$ we define subtypes according to the form of the planar
tangle $\Box_{11}$.
The pair $(x_1,x_2)$ is either $(b,c)$ or $(c,b)$.
}
\end{table}

\begin{table}
\begin{align*}
  & N_+^\beta & 
  & \Box_{00} &
  & &
  & N_-^\beta & 
  & \Box_{11} \\
  & - &
  & c < b < sR < t\beta < t\beta < sL &
  & &
  & - &
  & b < c < sL < t\betabar < t\betabar < sR & \\
  & N2_+^\beta: &
  & sR < c < b < t\beta < t\beta < sL &
  & &
  & N2_-^\beta: &
  & sL < b < c < t\betabar < t\betabar < sR & \\
  & N3_+^L: &
  & sR < tL < c < b < tL < sL &
  & &
  & N3_-^L: &
  & sL < tR < b < c < tR < sR & \\
  & N4_+^\beta: &
  & sR < t\beta < t\beta < c < b < sL &
  & &
  & N4_-^\beta: &
  & sL < t\betabar < t\betabar < b < c < sR & \\
  & N5_+^\beta: &
  & sR < t\beta < t\beta < sL < x_1 < x_2 &
  & &
  & N5_-^\beta: &
  & sL < t\betabar < t\betabar < sR < x_1 < x_2 & \\
  & N6_+^\beta: &
  & sR < t\beta < t\beta < sL,\, (b < c) &
  & &
  & N6_-^\beta: &
  & sL < t\betabar < t\betabar < sR,\, (b < c) &
\end{align*}
\caption{
\label{table:N-types}
Subtypes of nested squares.
For $N_+^\beta$ we define subtypes according to the form of the planar
tangle $\Box_{00}$.
For $N_-^\beta$ we define subtypes according to the form of the
planar tangle $\Box_{11}$.
The pair $(x_1,x_2)$ is either $(b,c)$ or $(c,b)$.
}
\end{table}

We prove Lemma \ref{lemma:d-red-box} by treating each subtype of
square in Tables \ref{table:I-types} and \ref{table:N-types} as a
separate case.
We prove three representative cases in detail:

\begin{lemma}
Lemma \ref{lemma:d-red-box} holds for squares of type $I3_+$.
\end{lemma}

\begin{proof}
A square $\Box$ of type $I3_+$ has the form
\begin{eqnarray*}
\begin{tikzcd}
  sR < tR < c < b < sL < tL
  \arrow{r}{W2_-}[swap]{s}
  \arrow{d}{W2_-}[swap]{t}
  &
  sL < b < c < tL < sR < tL
  \arrow{d}{C1_+^R}[swap]{t}
  \\
  sR < tL < sR < b < c < tR
  \arrow{r}{C3_+^L}[swap]{s} &
  sL < b < c < tR,\, (sR < tR).
\end{tikzcd}
\end{eqnarray*}
According to Table \ref{table:summary-saddles}, none of the saddles
induced by $\Box_T$, $\Box_B$, $\Box_R$, or $\Box_L$ are type $W_-$,
so $\partialbar^2(\Box) = 0$.
Thus $\partialbar^2(\Box)$ has the form given in equation
(\ref{eqn:dbar-2-box-form}) with
\begin{align}
  \label{eqn:I3-p-residual}
  \partial_A(\Box) = 0.
\end{align}
According to Table \ref{table:summary-saddles}, we have
\begin{align}
  \label{eqn:I3-p-reduction}
  &\beta_1(\Box_R)\alpha_2(\Box_T) =
  \Ttilde(\Box_R)Q(\Box_T), &
  &\beta_1(\Box_B)\alpha_2(\Box_L) = 0.
\end{align}
From equations (\ref{eqn:I3-p-residual}) and
(\ref{eqn:I3-p-reduction}), it follows that
\begin{align*}
  \partial_{red}(\Box) :=
  \partial_A(\Box) + \beta_1(\Box_R)\alpha_2(\Box_T) + 
  \beta_1(\Box_B)\alpha_2(\Box_L) =
  \Ttilde(\Box_R) Q(\Box_T).
\end{align*}
Since the saddle $\Box_R$ is type $C1_+^R$, we have
\begin{align*}
  \Ttilde(\Box_R) :=
  \Ttilde^L(\Box_R) + \Ttilde^R(\Box_R) = \Ttilde^R(\Box_R),
\end{align*}
and from the definitions of the maps
$\Ttilde^R(\Box_R)$, $Q(\Box_T)$, $\Ttilde^L(\Box_B)$, $Q(\Box_L)$, it
follows that
\begin{align*}
  \Ttilde^R(\Box_R) Q(\Box_T) =
  \Ttilde^L(\Box_B) Q(\Box_L).
\end{align*}
Thus
\begin{align*}
  \partial_{red}(\Box) =
  \Ttilde^L(\Box_B) Q(\Box_L) = \partial_T^+(\Box).
\end{align*}
\end{proof}

\begin{lemma}
Lemma \ref{lemma:d-red-box} holds for squares of type $I5_+$.
\end{lemma}

\begin{proof}
A square $\Box$ of type $I5_+$ has the form
\begin{eqnarray*}
\begin{tikzcd}
  sR < tR < sL < tL < x_1 < x_2
  \arrow{r}{W3_-}[swap]{s}
  \arrow{d}{W3_-}[swap]{t}
  &
  sL < tL < sR < tL < x_1 < x_2
  \arrow{d}{C3_+^R}[swap]{t}
  \\
  sR < tL < sR < tR < x_1 < x_2
  \arrow{r}{C3_+^L}[swap]{s} &
  sL < tR < x_1 < x_2,\, (sR < tR).
\end{tikzcd}
\end{eqnarray*}
According to Table \ref{table:summary-saddles},
the induced saddles $(\Box_T)_0$, $(\Box_T)_1$, $(\Box_L)_0$, and
$(\Box_L)_1$ are type $W_-$,
the induced saddles $(\Box_R)_0$ and $(\Box_R)_1$ are type $C_+^R$,
and
the induced saddles $(\Box_B)_0$ and $(\Box_B)_1$ are type $C_+^L$.
It follows that
\begin{align}
  \label{eqn:dbar-2-I5-p}
  \partialbar^2(\Box) =
  \Ttilde_{00}^L(\Box_B) Q_{00}(\Box_L) +
  \Ttilde_{11}^L(\Box_B) Q_{11}(\Box_L).
\end{align}
The maps
$Q_{00}(\Box_L)$,
$Q_{11}(\Box_L)$,
$\Ttilde_{00}^L(\Box_B)$, and
$\Ttilde_{11}^L(\Box_B)$,
are computed in Lemmas \ref{lemma:W3-m} and \ref{lemma:C3} in Appendix
\ref{appendix:induced-saddles}.
We substitute these maps into equation (\ref{eqn:dbar-2-I5-p}) to
obtain
\begin{align*}
  \partialbar^2(\Box) =
  \left(\begin{array}{ccc}
    \partial_A(\Box) & 0 & 0 \\
    0 & \partial_B(\Box) & 0 \\
    0 & 0 & \partial_B(\Box)
  \end{array}\right),
\end{align*}
where
\begin{align}
  \label{eqn:I5-p-reduction}
  \partial_A(\Box) = \partial_B(\Box) = \Ttilde^L(\Box_B) Q(\Box_L).
\end{align}
So $\partialbar^2(\Box)$ has the form given in equation
(\ref{eqn:dbar-2-box-form}).
According to Table \ref{table:summary-saddles}, we have
\begin{align}
  \label{eqn:I5-p-residual}
  &\beta_1(\Box_R)\alpha_2(\Box_T) = 0, &
  &\beta_1(\Box_B)\alpha_2(\Box_L) = 0.
\end{align}
From equations (\ref{eqn:I5-p-reduction}) and
(\ref{eqn:I5-p-residual}), it follows that
\begin{align*}
  \partial_{red}(\Box) :=
  \partial_A(\Box) + \beta_1(\Box_R)\alpha_2(\Box_T) + 
  \beta_1(\Box_B)\alpha_2(\Box_L) =
  \Ttilde^L(\Box_B) Q(\Box_L) = \partial_T^+(\Box).
\end{align*}
\end{proof}

\begin{lemma}
Lemma \ref{lemma:d-red-box} holds for squares of type $N5_+^\beta$.
\end{lemma}

\begin{proof}
A square $\Box$ of type $N5_+^\beta$ has the form
\begin{eqnarray*}
\begin{tikzcd}
  sR < t\beta < t\beta < sL < x_1 < x_2
  \arrow{r}{W3_-}[swap]{s}
  \arrow{d}{C3_+^\betabar}[swap]{t}
  &
  sL < t\betabar < t\betabar < sR < x_1 < x_2
  \arrow{d}{C3_+^\beta}[swap]{t}
  \\
  sR < t\betabar < sL < x_1 < x_2,\, (t\betabar)
  \arrow{r}{W3_-}[swap]{s}
  &
  sL < t\beta < sR < x_1 < x_2,\,(t\beta).
\end{tikzcd}
\end{eqnarray*}
According to Table \ref{table:summary-saddles},
the induced saddles $(\Box_T)_0$, $(\Box_T)_1$, $(\Box_B)_0$, and
$(\Box_B)_1$ are type $W_-$,
the induced saddles $(\Box_R)_0$ and $(\Box_R)_1$ are type
$C_+^\beta$, and
the induced saddles $(\Box_L)_0$ and $(\Box_L)_1$ are type
$C_+^\betabar$.
It follows that
\begin{align}
  \label{eqn:dbar-2-N5-p}
  \partialbar^2(\Box) =
  \Ttilde_{00}^L(\Box_R) Q_{00}(\Box_T) +
  \Ttilde_{11}^L(\Box_R) Q_{11}(\Box_T) +
  Q_{00}(\Box_B) \Ttilde_{00}^L(\Box_L) +
  Q_{11}(\Box_B) \Ttilde_{11}^L(\Box_L).
\end{align}
The maps
$Q_{00}(\Box_T)$,
$Q_{11}(\Box_T)$,
$Q_{00}(\Box_B)$,
$Q_{11}(\Box_B)$,
$\Ttilde_{00}^L(\Box_R)$,
$\Ttilde_{11}^L(\Box_R)$,
$\Ttilde_{00}^L(\Box_L)$, and
$\Ttilde_{11}^L(\Box_L)$
are computed in Lemmas \ref{lemma:W3-m} and \ref{lemma:C3} in Appendix
\ref{appendix:induced-saddles}.
We substitute these maps into equation (\ref{eqn:dbar-2-N5-p})
to obtain
\begin{align*}
  \partialbar^2(\Box) =
  \left(\begin{array}{ccc}
    \partial_A(\Box) & 0 & 0 \\
    0 & \partial_B(\Box) & 0 \\
    0 & 0 & \partial_B(\Box)
  \end{array}\right),
\end{align*}
where
\begin{align}
  \label{eqn:N5-p-reduction}
  \partial_A(\Box) = \partial_B(\Box) =
  \Ttilde^L(\Box_R) Q(\Box_T) + Q(\Box_B) \Ttilde^L(\Box_L).
\end{align}
So $\partialbar^2(\Box)$ has the form given in equation
(\ref{eqn:dbar-2-box-form}).
Note that if $\beta = L$ then the first term in equation
(\ref{eqn:N5-p-reduction}) vanishes and if $\beta = R$ then the second
term in equation (\ref{eqn:N5-p-reduction}) vanishes.
According to Table \ref{table:summary-saddles}, we have
\begin{align}
  \label{eqn:N5-p-residual}
  &\beta_1(\Box_R)\alpha_2(\Box_T) = 0, &
  &\beta_1(\Box_B)\alpha_2(\Box_L) = 0.
\end{align}
From equations (\ref{eqn:N5-p-reduction}) and
(\ref{eqn:N5-p-residual}), it follows that
\begin{align*}
  \partial_{red}(\Box) :=
  \partial_A(\Box) + \beta_1(\Box_R)\alpha_2(\Box_T) + 
  \beta_1(\Box_B)\alpha_2(\Box_L) =
  \Ttilde^L(\Box_R) Q(\Box_T) + Q(\Box_B) \Ttilde^L(\Box_L) =
  \partial_T^+(\Box).
\end{align*}
\end{proof}

The remaining cases are similar.
The results of the calculations for each case are
summarized in Table \ref{table:summary-squares}.

\begin{table}
\begin{eqnarray*}
  \begin{array}{llllllll}
    \Box &
    \Box_T & \Box_R &
    \Box_L & \Box_B &
    \partial_A(\Box) &
    \beta_1(\Box_R)\alpha_2(\Box_T) &
    \beta_1(\Box_B)\alpha_2(\Box_L) \\
    \\
    I3_+ &
    W2_- & C1_+^R &
    W2_- & C3_+^L &
    0 & \Ttilde(\Box_R)Q(\Box_T) & 0 \\
    I4_+ &
    W3_- & C2_+^R &
    W2_- & C2_+^L &
    0 & 0 & \Ttilde^L(\Box_B) Q(\Box_L) \\
    I5_+ &
    W3_- & C3_+^R &
    W3_- & C3_+^L &
    \Ttilde^L(\Box_B) Q(\Box_L) & 0 & 0 \\
    I6_+ &
    W4_- & C4_+^R &
    W4_- & C4_+^L &
    \Ttilde^L(\Box_B) Q(\Box_L) & 0 & 0 \\
    \\
    I3_- &
    C3_-^R & W2_- &
    C1_-^L & W2_- &
    0 & 0 & Q(\Box_B) \Ttilde(\Box_L) \\
    I4_- &
    C2_-^R & W2_- &
    C2_-^L & W3_- &
    0 & Q(\Box_R) \Ttilde^R(\Box_T) & 0 \\
    I5_- &
    C3_-^R & W3_- &
    C3_-^L & W3_- &
    Q(\Box_B) \Ttilde^L(\Box_L) & 0 & 0 \\
    I6_- &
    C4_-^R & W4_- &
    C4_-^L & W4_- &
    Q(\Box_B) \Ttilde^L(\Box_L) & 0 & 0 \\
    \\
    N2_+^\beta &
    W2_- & C3_+^\beta &
    C1_+^\betabar & W2_- &
    0 & 0 & Q(\Box_B)\Ttilde(\Box_L) \\
    N3_+^L &
    W2_- & C2_+^L &
    C2_+^R & W4_- &
    0 & \Ttilde^L(\Box_R)Q(\Box_T) & 0 \\
    N4_+^\beta &
    W2_- & C1_+^\beta &
    C3_+^\betabar & W2_- &
    0 & \Ttilde(\Box_R)Q(\Box_T) & 0 \\
    N5_+^\beta &
    W3_- & C3_+^\beta &
    C3_+^\betabar & W3_- &
    \Ttilde^L(\Box_R) Q(\Box_T) +
    Q(\Box_B) \Ttilde^L(\Box_L) & 0 & 0 \\
    N6_+^\beta &
    W4_- & C4_+^\beta &
    C4_-^\betabar & W4_- &
    \Ttilde^L(\Box_R) Q(\Box_T) +
    Q(\Box_B) \Ttilde^L(\Box_L) & 0 & 0 \\
    \\
    N2_-^\beta &
    W2_- & C1_-^\beta &
    C3_-^\betabar & W2_- &
    0 & \Ttilde(\Box_R) Q(\Box_T) & 0 \\
    N3_-^L &
    W4_- & C2_-^L &
    C2_-^R & W2_- &
    0 & 0 & Q(\Box_B) \Ttilde^R(\Box_L) \\
    N4_-^\beta &
    W2_- & C3_-^\beta &
    C1_-^\betabar & W2_- &
    0 & 0 & Q(\Box_B) \Ttilde(\Box_L) \\
    N5_-^\beta &
    W3_- & C3_-^\beta &
    C3_-^\betabar & W3_- &
    \Ttilde^L(\Box_R) Q(\Box_T) +
    Q(\Box_B) \Ttilde^L(\Box_L) & 0 & 0 \\
    N6_-^\beta &
    W4_- & C4_-^\beta &
    C4_-^\betabar & W4_- &
    \Ttilde^L(\Box_R) Q(\Box_T) +
    Q(\Box_B) \Ttilde^L(\Box_L)  & 0 & 0 \\
  \end{array}
\end{eqnarray*}
\caption{
\label{table:summary-squares}
Interleaved and nested squares in the cube of resolutions of $T$.
For each square $\Box$, we list the terms $\Box$ contributes to
$\partial_{red}(\Box) =
\partial_A(\Box) + \beta_1(\Box_R)\alpha_2(\Box_T) +
\beta_1(\Box_B)\alpha_2(\Box_L)$.
}
\end{table}

\subsection{Squares that are not interleaved or nested}

Recall from Lemma \ref{lemma:zero-unless-I-N} that
$\partial_T^+(\Box) = 0$ if $\Box$ is not interleaved or nested, so
for such squares Lemma \ref{lemma:d-red-box} follows from the
following two lemmas:

\begin{lemma}
For each square $\Box$ in the cube of resolutions of $T$ that is not
interleaved or nested, we have
\begin{align*}
  \beta_1(\Box_R) \alpha_2(\Box_T) +
  \beta_1(\Box_B)\alpha_2(\Box_L) = 0.
\end{align*}
\end{lemma}

\begin{proof}
Using the results summarized in Table \ref{table:summary-saddles}, we
enumerate the types of pairs of successive saddles $(s,t)$ in the cube
of resolutions of $T$ that could yield a nonzero term
$\beta_1(t)\alpha_2(s)$, and we display the results in Table
\ref{table:reduction-terms}.
For each such pair, we enumerate the types of squares to which the
pair could belong.

For example, consider a pair of successive saddles $(s,t)$, where $s$
is type $W2_-$ and $t$ is type $C1_+^R$:
\begin{eqnarray*}
\begin{tikzcd}
  p_1(\ptc) \arrow{r}{W2_-}[swap]{s} &[1 em]
  p_2(\ptb) \arrow{r}{C1_+^R}[swap]{t} &[1 em]
  p_3(\ptb).
\end{tikzcd}
\end{eqnarray*}
According to Table \ref{table:summary-saddles}, the pair $(s,t)$
yields the term
\begin{align*}
  \beta_1(t)\alpha_2(s) = \Ttilde(t)Q(s).
\end{align*}
Given that $s$ is type $W2_-$ and $t$ is type $C1_+^R$, there are
three possibilities for the orderings of the attaching points
of the saddles $s$ and $t$ in the planar tangle $p_2(\ptb)$, which we
enumerate in the second column of the following table:
\begin{align*}
  & p_1(\ptc) = \Box_{00}&
  & p_2(\ptb) &
  & \Box \\
  & sR < tR < tR < c < b < sL &
  & sL < b < c < tL < tL < sR &
  & N4_+^R \\
  & sR < tR < c < b < sL < tL &
  & sL < b < c < tL < sR < tL &
  & I3_+ \\
  & sR < c < b < sL < tL < tL &
  & sL < b < c < sR < tL < tL &
  & D
\end{align*}
We describe the corresponding planar tangle $p_1(\ptc) = \Box_{00}$
for each ordering in the first column of the table, and use this
description to classify the corresponding square, as indicated in the
third column.

We repeat this calculation for each of the remaining pairs of saddles
$(s,t)$ that yield a nonzero term $\beta_1(t)\alpha_2(s)$ and display
the results in Table \ref{table:reduction-terms}.
In each case, we find that $(s,t)$ must belong to a square that is
interleaved, nested, or disjoint.
If the square $\Box$ is disjoint, then
\begin{align*}
  \beta_1(\Box_R) \alpha_2(\Box_T) +
  \beta_1(\Box_B) \alpha_2(\Box_L) = 0,
\end{align*}
since the two terms cancel.
\end{proof}

\begin{table}
\begin{eqnarray*}
\begin{array}{lll}
\begin{tikzcd}
  p_1 \arrow{r}[swap]{s} &
  p_2 \arrow{r}[swap]{t} &
  p_3
\end{tikzcd}
&
\beta_1(t) \alpha_2(s) &
\Box \\
\begin{tikzcd}
  p_1(\ptc) \arrow{r}{W2_-}[swap]{s} &
  p_2(\ptb) \arrow{r}{C1_+^L}[swap]{t} &
  p_3(\ptb)
\end{tikzcd}
& \Ttilde(t)Q(s) &
N4_+^L,\, D \\
\begin{tikzcd}
  p_1(\ptc) \arrow{r}{W2_-}[swap]{s} &
  p_2(\ptb) \arrow{r}{C1_+^R}[swap]{t} &
  p_3(\ptb)
\end{tikzcd}
& \Ttilde(t)Q(s) &
I3_+,\, N4_+^R,\, D \\
\begin{tikzcd}
  p_1(\ptc) \arrow{r}{W2_-}[swap]{s} &
  p_2(\ptb) \arrow{r}{C1_-^\beta}[swap]{t} &
  p_3(\ptb)
\end{tikzcd}
&\Ttilde(t)Q(s) &
N2_-^\beta,\, D \\
\begin{tikzcd}
  p_1(\ptc) \arrow{r}{C1_+^\beta}[swap]{s} &
  p_2(\ptc) \arrow{r}{W2_-}[swap]{t} &
  p_3(\ptb)
\end{tikzcd}
& Q(t)\Ttilde(s) &
N2_+^\betabar,\, D \\
\begin{tikzcd}
  p_1(\ptc) \arrow{r}{C1_-^L}[swap]{s} &
  p_2(\ptc) \arrow{r}{W2_-}[swap]{t} &
  p_3(\ptb)
\end{tikzcd}
& Q(t)\Ttilde(s) &
I3_-,\, N4_-^R,\, D \\
\begin{tikzcd}
  p_1(\ptc) \arrow{r}{C1_-^R}[swap]{s} &
  p_2(\ptc) \arrow{r}{W2_-}[swap]{t} &
  p_3(\ptb)
\end{tikzcd}
& Q(t)\Ttilde(s) &
N4_-^L,\, D \\
\begin{tikzcd}
  p_1(\ptc) \arrow{r}{W2_-}[swap]{s} &
  p_2(\ptb) \arrow{r}{C2_+^L}[swap]{t} &
  p_3(\pto)
\end{tikzcd}
& \Ttilde^L(t) Q(s) &
I4_+,\, N3_+^L \\
\begin{tikzcd}
  p_1(\pto) \arrow{r}{C2_-^R}[swap]{s} &
  p_2(\ptc) \arrow{r}{W2_-}[swap]{t} &
  p_3(\ptb)
\end{tikzcd}
& Q(t) \Ttilde^R(s) &
I4_-,\, N3_-^L \\
\end{array}
\end{eqnarray*}
\caption{
\label{table:reduction-terms}
Pairs of successive of saddles $(s,t)$ in the cube of resolutions of
$T$ that yield a nonzero term $\beta_1(t)\alpha_2(s)$.
For each such pair $(s,t)$, we compute
$\beta_1(t)\alpha_2(s)$ and enumerate the squares to which the pair
$(s,t)$ could belong.
}
\end{table}

\begin{lemma}
For each square $\Box$ in the cube of resolutions of $T$ that is not
interleaved or nested, the map $\partialbar^2(\Box)$ has the form
given in equation (\ref{eqn:dbar-2-box-form}) with
$\partial_A(\Box) = 0$.
\end{lemma}

\begin{proof}
Recall from equations (\ref{eqn:dbar-2-t-s}) and
(\ref{eqn:dbar-2-box-def}) that given a square
$\Box$ in the cube of resolutions of $T$, the corresponding map
$\partialbar^2(\Box)$ is given by
\begin{align*}
  \partialbar^2(\Box) :=
  \partialbar^2(\Box_R,\Box_T) +
  \partialbar^2(\Box_B,\Box_L),
\end{align*}
where
\begin{align*}
  \partialbar^2(s,t) :=
  \Ttilde_{00}^L(t) Q_{00}(s) +
  Q_{00}(t)\Ttilde_{00}^L(s) +
  \Ttilde_{11}^L(t) Q_{11}(s) +
  Q_{11}(t)\Ttilde_{11}^L(s)
\end{align*}
collects the terms in $\partial_\Tbar^+$ corresponding to the pairs of
induced saddles $([s]_0,[t]_0)$ and $([s]_1,[t]_1)$.
We determine which pairs of saddles $(s,t)$ in the cube of resolutions
of $T$ could yield a nonzero map $\partialbar^2(s,t)$, and for each
such pair we enumerate the squares to which $(s,t)$ could belong.
We show that for each of these squares the map $\partialbar^2(\Box)$
has the form given in equation (\ref{eqn:dbar-2-box-form}), and if
$\partial_A(\Box)$ is nonzero then the square must be interleaved or
nested.

For $\partialbar^2(s,t)$ to be nonzero, at least one of the pairs
$([s]_0,[t]_0)$ or $([s]_1,[t]_1)$ must contain one saddle of type
$W_-$ and one saddle of type $C_\pm^L$.
According to Table \ref{table:summary-saddles}, if a saddle $u$ in the
cube of resolutions of $T$ induces a saddle $[u]_0$ or $[u]_1$ of type
$W_-$, then $u$ must be type $W3_-$, $W4_-$, $C2_+^L$, or $C2_-^R$.
We consider these cases separately in Tables
\ref{table:residual-W3-1A} --
\ref{table:residual-C2-m-R}.
For each case, we enumerate the saddles $v$ in the cube of
resolutions of $T$ that induce a saddle $[v]_0$ or $[v]_1$ of type
$C_\pm^L$ and could pair with the saddle $[u]_0$ or $[u]_1$ of
type $W_-$ to yield a nonzero map $\partialbar^2(s,t)$, where
$(s,t)$ is either $(u,v)$ or $(v,u)$.

For example, consider a saddle $s:p_1(\ptb) \rightarrow p_2(\ptb)$ of type
$W3_-$.
According to Table \ref{table:summary-saddles}, the induced saddles
$[s]_0$ and $[s]_1$ are type $W_-$.
We can obtain a nonzero term $\partialbar^2(s,t)$ by pairing $s$ with
a saddle $t:p_2(\ptb) \rightarrow p_3(\ptb)$ of type $C3_+^L$, since
according to Table \ref{table:summary-saddles} the induced saddles
$[t]_0$ and $[t]_1$ are type $C_+^L$:
\begin{align}
  \label{eqn:dbar-2-example}
  \partialbar^2(s,t) =
  \Ttilde_{00}^L(t) Q_{00}(s) +
  \Ttilde_{11}^L(t) Q_{11}(s).
\end{align}
The maps
$Q_{00}(s)$, $Q_{11}(s)$, $\Ttilde_{00}^L(t)$, and $\Ttilde_{11}^L(t)$
are computed in Lemmas \ref{lemma:W3-m} and \ref{lemma:C3} in Appendix
\ref{appendix:induced-saddles}.
We substitute these maps into equation (\ref{eqn:dbar-2-example}) to
obtain
\begin{align*}
  \partialbar^2(s,t) =
  \left(\begin{array}{ccc}
    \partial_A(s,t) & 0 & 0 \\
    0 & \partial_{B^1}(s,t) & 0 \\
    0 & 0 & \partial_{B^2}(s,t)
  \end{array}
  \right),
\end{align*}
where
\begin{align*}
  \partial_A(s,t) = \partial_{B^1}(s,t) = \partial_{B^2}(s,t) =
  \Ttilde^L(t) Q(s).
\end{align*}
Given that $s$ is type $W3_-$ and $t$ is type $C3_+^L$, there are six
possibilities for the orderings of the attaching points of the saddles
$s$ and $t$ in the planar tangle $p_2(\ptb)$, which we enumerate in
the second column of the following table:
\begin{align*}
  & p_1(\ptb) = \Box_{00} &
  & p_2(\ptb) &
  & \Box \\
  & sR < sL < tR < tR < b < c &
  & sL < sR < tR < tR < b < c &
  & D \\
  & sR < tL < sL < tR < b < c &
  & sL < tR < sR < tR < b < c &
  & \textup{not possible by Lemma \ref{lemma:impossible-pairs}} \\
  & tR < sR < sL < tR < b < c &
  & tR < sL < sR < tR < b < c &
  & \textup{not possible by Lemma \ref{lemma:impossible-nest}} \\
  & sR < tL < tL < sL < b < c &
  & sL < tR < tR < sR < b < c &
  & \textup{$N5_+^L$} \\
  & tR < sR < tL < sL < b < c &
  & tR < sL < tR < sR < b < c &
  & \textup{$I5_+$} \\
  & tR < tR < sR < sL < b < c &
  & tR < tR < sL < sR < b < c &
  & D
\end{align*}
We describe the corresponding planar tangle $p_1(\ptb) = \Box_{00}$ for
each ordering in the first column of the table, and use this
description to classify the corresponding square, as indicated in the
third column.

If a pair of saddles $(s,t)$ belongs to a disjoint square $\Box$,
then
\begin{align*}
  \partialbar^2(\Box) :=
  \partialbar^2(\Box_R,\Box_T) +
  \partialbar^2(\Box_B,\Box_L) = 0,
\end{align*}
since the two terms cancel, so $\partialbar^2(\Box)$ is of the form
given in equation (\ref{eqn:dbar-2-box-form}) with
$\partial_A(\Box) = 0$.

We note that in certain cases, such as the pairs of saddles $(s,t)$ in
Tables \ref{table:residual-C2-p-L} and \ref{table:residual-C2-m-R},
the only nonvanishing blocks of $\partialbar^2(s,t)$ are
$\alpha_1(s,t)$ or $\beta_2(s,t)$.
In such cases we do not enumerate the squares to which $(s,t)$ could
belong, since we already know that for each such square the
contribution of  $\partialbar^2(s,t)$ to $\partialbar^2(\Box)$ is of
the form given in equation (\ref{eqn:dbar-2-box-form}) with
$\partial_A(s,t) = 0$.
\end{proof}

\begin{table}
\begin{eqnarray*}
\begin{array}{lll}
\begin{tikzcd}
  p_1 \arrow{r}[swap]{s} &
  p_2 \arrow{r}[swap]{t} &
  p_3
\end{tikzcd}
& \textup{nonzero blocks of $\partialbar^2(s,t)$}
& \Box \\
\begin{tikzcd}
  p_1(\ptb) \arrow{r}{W3_-}[swap]{s} &
  p_2(\ptb) \arrow{r}{W2_+}[swap]{t} &
  p_3(\ptc)
\end{tikzcd}
&\alpha_1(s,t)
& - \\
\begin{tikzcd}
  p_1(\ptc) \arrow{r}{W2_-}[swap]{s} &
  p_2(\ptb) \arrow{r}{W3_-}[swap]{t} &
  p_3(\ptb)
\end{tikzcd}
&\partial_{B^1}(s,t) = Q(t)Q(s)
& D \\
\begin{tikzcd}
  p_1(\ptb) \arrow{r}{W3_-}[swap]{s} &
  p_2(\ptb) \arrow{r}{C1_\pm^R}[swap]{t} &
  p_3(\ptb)
\end{tikzcd}
&\partial_{B^2}(s,t) = \Ttilde^R(t) Q(s)
& D \\
\begin{tikzcd}
  p_1(\ptb) \arrow{r}{C1_\pm^R}[swap]{s} &
  p_2(\ptb) \arrow{r}{W3_-}[swap]{t} &
  p_3(\ptb)
\end{tikzcd}
&\partial_{B^2}(s,t) = Q(t) \Ttilde^R(s)
& D \\
\begin{tikzcd}
  p_1(\ptb) \arrow{r}{W3_-}[swap]{s} &
  p_2(\ptb) \arrow{r}{C3_+^L}[swap]{t} &
  p_3(\ptb)
\end{tikzcd}
&\partial_A(s,t) = \partial_{B^1}(s,t) = \partial_{B^2}(s,t) =
\Ttilde^L(t) Q(s)
& I5_+,\, N5_+^L,\, D \\
\begin{tikzcd}
  p_1(\ptb) \arrow{r}{W3_-}[swap]{s} &
  p_2(\ptb) \arrow{r}{C3_-^L}[swap]{t} &
  p_3(\ptb)
\end{tikzcd}
&\partial_A(s,t) = \partial_{B^1}(s,t) = \partial_{B^2}(s,t) =
\Ttilde^L(t) Q(s)
& N5_-^L,\, D \\
\begin{tikzcd}
  p_1(\ptb) \arrow{r}{C3_+^L}[swap]{s} &
  p_2(\ptb) \arrow{r}{W3_-}[swap]{t} &
  p_3(\ptb)
\end{tikzcd}
&\partial_A(s,t) = \partial_{B^1}(s,t) = \partial_{B^2}(s,t) =
Q(t)\Ttilde^L(s)
& N5_+^R,\, D \\
\begin{tikzcd}
  p_1(\ptb) \arrow{r}{C3_-^L}[swap]{s} &
  p_2(\ptb) \arrow{r}{W3_-}[swap]{t} &
  p_3(\ptb)
\end{tikzcd}
&\partial_A(s,t) = \partial_{B^1}(s,t) = \partial_{B^2}(s,t) =
Q(t)\Ttilde^L(s)
& I5_-,\, N5_-^R,\, D \\
\end{array}
\end{eqnarray*}
\caption{
\label{table:residual-W3-1A}
Nonzero maps $\partialbar^2(s,t)$ for which the induced saddle of
type $W_-$ is induced by a saddle $p(\ptb) \rightarrow q(\ptb)$ of type
$W3_-$.
For each pair of saddles $(s,t)$ we describe the nonzero blocks of
$\partialbar^2(s,t)$ and enumerate the squares to which the pair
$(s,t)$ could belong.
}
\end{table}

\begin{table}
\begin{eqnarray*}
\begin{array}{lll}
\begin{tikzcd}
  p_1 \arrow{r}[swap]{s} &
  p_2 \arrow{r}[swap]{t} &
  p_3
\end{tikzcd}
& \textup{nonzero blocks of $\partialbar^2(s,t)$}
& \Box \\
\begin{tikzcd}
  p_1(\ptc) \arrow{r}{W3_-}[swap]{s} &
  p_2(\ptc) \arrow{r}{W2_-}[swap]{t} &
  p_3(\ptb)
\end{tikzcd}
& \partial_{B^2}(s,t) = Q(t)Q(s)
& D \\
\begin{tikzcd}
  p_1(\ptb) \arrow{r}{W2_+}[swap]{s} &
  p_2(\ptc) \arrow{r}{W3_-}[swap]{t} &
  p_3(\ptc)
\end{tikzcd}
&\alpha_1(s,t)
& - \\
\begin{tikzcd}
  p_1(\ptc) \arrow{r}{W3_-}[swap]{s} &
  p_2(\ptc) \arrow{r}{C1_\pm^R}[swap]{t} &
  p_3(\ptc)
\end{tikzcd}
&\partial_{B^1}(s,t) = \Ttilde^R(t)Q(s)
& D \\
\begin{tikzcd}
  p_1(\ptc) \arrow{r}{C1_\pm^R}[swap]{s} &
  p_2(\ptc) \arrow{r}{W3_-}[swap]{t} &
  p_3(\ptc)
\end{tikzcd}
&\partial_{B^1}(s,t) = Q(t)\Ttilde^R(s)
& D \\
\begin{tikzcd}
  p_1(\ptc) \arrow{r}{W3_-}[swap]{s} &
  p_2(\ptc) \arrow{r}{C2_+^R}[swap]{t} &
  p_3(\pto)
\end{tikzcd}
& \beta_2(s,t)
& - \\
\begin{tikzcd}
  p_1(\pto) \arrow{r}{C2_-^R}[swap]{s} &
  p_2(\ptc) \arrow{r}{W3_-}[swap]{t} &
  p_3(\ptc)
\end{tikzcd}
& \partial_A(s,t) = Q(t) \Ttilde^R(s)
& D \\
\begin{tikzcd}
  p_1(\ptc) \arrow{r}{W3_-}[swap]{s} &
  p_2(\ptc) \arrow{r}{C3_+^L}[swap]{t} &
  p_3(\ptc)
\end{tikzcd}
&\partial_A(s,t) = \partial_{B^1}(s,t) = \partial_{B^2}(s,t) =
\Ttilde^L(t) Q(s)
& I5_+,\, N5_+^L,\, D \\
\begin{tikzcd}
  p_1(\ptc) \arrow{r}{W3_-}[swap]{s} &
  p_2(\ptc) \arrow{r}{C3_-^L}[swap]{t} &
  p_3(\ptc)
\end{tikzcd}
&\partial_A(s,t) = \partial_{B^1}(s,t) = \partial_{B^2}(s,t) =
\Ttilde^L(t) Q(s)
& N5_-^L,\, D \\
\begin{tikzcd}
  p_1(\ptc) \arrow{r}{C3_+^L}[swap]{s} &
  p_2(\ptc) \arrow{r}{W3_-}[swap]{t} &
  p_3(\ptc)
\end{tikzcd}
&\partial_A(s,t) = \partial_{B^1}(s,t) = \partial_{B^2}(s,t) =
Q(t)\Ttilde^L(s)
& N5_+^R,\, D \\
\begin{tikzcd}
  p_1(\ptc) \arrow{r}{C3_-^L}[swap]{s} &
  p_2(\ptc) \arrow{r}{W3_-}[swap]{t} &
  p_1(\ptc)
\end{tikzcd}
&\partial_A(s,t) = \partial_{B^1}(s,t) = \partial_{B^2}(s,t) =
Q(t)\Ttilde^L(s)
& I5_-,\, N5_-^R,\, D \\
\end{array}
\end{eqnarray*}
\caption{
\label{table:residual-W3-1B}
Nonzero maps $\partialbar^2(s,t)$ for which the induced saddle of
type $W_-$ is induced by a saddle $p(\ptc) \rightarrow q(\ptc)$ of type
$W3_-$.
For each pair of saddles $(s,t)$ we list the nonzero blocks of
$\partialbar^2(s,t)$ and enumerate the squares to which the pair
$(s,t)$ could belong.
}
\end{table}

\begin{table}
\begin{eqnarray*}
\begin{array}{lll}
\begin{tikzcd}
  p_1 \arrow{r}[swap]{s} &
  p_2 \arrow{r}[swap]{t} &
  p_3
\end{tikzcd}
& \textup{nonzero blocks of $\partialbar^2(s,t)$}
& \Box \\
\begin{tikzcd}
  p_1(\pto) \arrow{r}{W4_-}[swap]{s} &
  p_2(\pto) \arrow{r}{C2_-^R}[swap]{t} &
  p_3(\ptc)
\end{tikzcd}
& \partial_A(s,t) = \Ttilde^R(s)Q(t)
& D \\
\begin{tikzcd}
  p_1(\ptc) \arrow{r}{C2_+^R}[swap]{s} &
  p_2(\pto) \arrow{r}{W4_-}[swap]{t} &
  p_3(\pto)
\end{tikzcd}
& \beta_2(s,t)
& - \\
\begin{tikzcd}
  p_1(\pto) \arrow{r}{W4_-}[swap]{s} &
  p_2(\pto) \arrow{r}{C4_+^L}[swap]{t} &
  p_3(\pto)
\end{tikzcd}
& \partial_A(s,t) = \Ttilde^L(t) Q(s)
& I6_+,\, N6_+^L,\, D \\
\begin{tikzcd}
  p_1(\pto) \arrow{r}{W4_-}[swap]{s} &
  p_2(\pto) \arrow{r}{C4_-^L}[swap]{t} &
  p_3(\pto)
\end{tikzcd}
& \partial_A(s,t) = \Ttilde^L(t) Q(s)
& N6_-^L,\, D \\
\begin{tikzcd}
  p_1(\pto) \arrow{r}{C4_+^L}[swap]{s} &
  p_2(\pto) \arrow{r}{W4_-}[swap]{t} &
  p_3(\pto)
\end{tikzcd}
& \partial_A(s,t) = Q(t) \Ttilde^L(s)
& N6_+^R,\, D \\
\begin{tikzcd}
  p_1(\pto) \arrow{r}{C4_-^L}[swap]{s} &
  p_2(\pto) \arrow{r}{W4_-}[swap]{t} &
  p_3(\pto)
\end{tikzcd}
& \partial_A = Q(t) \Ttilde^L(s)
& I6_-,\, N6_-^R,\, D \\
\begin{tikzcd}
  p_1(\pto) \arrow{r}{W4_-}[swap]{s} &
  p_2(\pto) \arrow{r}{C1_\pm^C}[swap]{t} &
  p_3(\pto)
\end{tikzcd}
& \partial_A(s,t) =
\Ttilde^{L_0}(t)Q(s)\id_{C_{[p_1]_0}} +
\Ttilde^{L_1}(t)Q(s)\id_{C_{[p_1]_1}}
& D \\
\begin{tikzcd}
  p_1(\pto) \arrow{r}{C1_\pm^C}[swap]{s} &
  p_2(\pto) \arrow{r}{W4_-}[swap]{t} &
  p_3(\pto)
\end{tikzcd}
& \partial_A(s,t) =
Q(t)\Ttilde^{L_0}(s)\id_{C_{[p_1]_0}} +
Q(t)\Ttilde^{L_1}(s)\id_{C_{[p_1]_1}}
& D
\end{array}
\end{eqnarray*}
\caption{
\label{table:residual-W4}
Nonzero maps $\partialbar^2(s,t)$ for which the induced saddle of
type $W_-$ is induced by a saddle $p(\pto) \rightarrow q(\pto)$ of
type $W4_-$.
For each pair of saddles $(s,t)$ we list the nonzero blocks of
$\partialbar^2(s,t)$ and enumerate the squares to which the pair
$(s,t)$ could belong.
}
\end{table}

\begin{table}
\begin{eqnarray*}
\begin{array}{ll}
\begin{tikzcd}
  p_1 \arrow{r}[swap]{s} &
  p_2 \arrow{r}[swap]{t} &
  p_3
\end{tikzcd}
&
\textup{nonzero blocks of $\partialbar^2(s,t)$} \\
\begin{tikzcd}
  p_1(\ptb) \arrow{r}{C1_\pm^R}[swap]{s} &
  p_2(\ptb) \arrow{r}{C2_+^L}[swap]{t} &
  p_3(\pto)
\end{tikzcd}
& \beta_2(s,t) \\
\begin{tikzcd}
  p_1(\ptb) \arrow{r}{C2_+^L}[swap]{s} &
  p_2(\pto) \arrow{r}{C2_-^R}[swap]{t} &
  p_3(\ptc)
\end{tikzcd}
& \beta_2(s,t) \\
\begin{tikzcd}
  p_1(\ptb) \arrow{r}{C3_\pm^L}[swap]{s} &
  p_2(\ptb) \arrow{r}{C2_+^L}[swap]{t} &
  p_3(\pto)
\end{tikzcd}
& \beta_2(s,t) \\
\begin{tikzcd}
  p_1(\ptb) \arrow{r}{C2_+^L}[swap]{s} &
  p_2(\pto) \arrow{r}{C4_\pm^L}[swap]{t} &
  p_3(\pto)
\end{tikzcd}
& \beta_2(s,t) \\
\begin{tikzcd}
  p_1(\ptb) \arrow{r}{C2_+^L}[swap]{s} &
  p_2(\pto) \arrow{r}{C1_\pm^C}[swap]{t} &
  p_3(\pto)
\end{tikzcd}
& \beta_2(s,t)
\end{array}
\end{eqnarray*}
\caption{
\label{table:residual-C2-p-L}
Nonzero maps $\partialbar^2(s,t)$ for which the induced saddle of
type $W_-$ is induced by a saddle $p(\ptb) \rightarrow q(\pto)$ of type
$C2_+^L$.
For each pair of saddles $(s,t)$ we list the nonzero blocks of
$\partialbar^2(s,t)$.
}
\end{table}

\begin{table}
\begin{eqnarray*}
\begin{array}{ll}
\begin{tikzcd}
  p_1 \arrow{r}[swap]{s} &
  p_2 \arrow{r}[swap]{t} &
  p_3
\end{tikzcd}
&
\textup{nonzero blocks of $\partialbar^2(s,t)$} \\
\begin{tikzcd}
  p_1(\pto) \arrow{r}{C2_-^R}[swap]{s} &
  p_2(\ptc) \arrow{r}{W2_-}[swap]{t} &
  p_3(\ptb)
\end{tikzcd}
& \alpha_1(s,t) \\
\begin{tikzcd}
  p_1(\pto) \arrow{r}{C2_-^R}[swap]{s} &
  p_2(\ptc) \arrow{r}{C1_\pm^R}[swap]{t} &
  p_3(\ptc)
\end{tikzcd}
& \alpha_1(s,t) \\
\begin{tikzcd}
  p_1(\pto) \arrow{r}{C2_-^R}[swap]{s} &
  p_2(\ptc) \arrow{r}{C3_\pm^L}[swap]{t} &
  p_3(\ptc)
\end{tikzcd}
& \alpha_1(s,t) \\
\begin{tikzcd}
  p_1(\pto) \arrow{r}{C4_\pm^L}[swap]{s} &
  p_2(\pto) \arrow{r}{C2_-^R}[swap]{t} &
  p_3(\ptc)
\end{tikzcd}
& \alpha_1(s,t) \\
\begin{tikzcd}
  p_1(\pto) \arrow{r}{C1_\pm^C}[swap]{s} &
  p_2(\pto) \arrow{r}{C2_-^R}[swap]{t} &
  p_3(\ptc)
\end{tikzcd}
& \alpha_1(s,t)
\end{array}
\end{eqnarray*}
\caption{
\label{table:residual-C2-m-R}
Nonzero maps $\partialbar^2(s,t)$ for which the induced saddle of
type $W_-$ is induced by a saddle $p(\pto) \rightarrow q(\ptc)$ of type
$C2_-^R$.
For each pair of saddles $(s,t)$ we list the nonzero blocks of
$\partialbar^2(s,t)$.
}
\end{table}

\begin{appendix}

\section{Bigradings}
\label{appendix:bigradings}

Here we prove:

\begin{lemma}
For each planar tangle $p$ in the cube of resolutions of a tangle
diagram $T$, we have $A_p = C_p^+$ as bigraded vector spaces.
\end{lemma}

\begin{proof}
By performing an isotopy of the annulus and an isotopy of the overpass
arc, we can reduce to the case where the tangle diagram $T$ is in
standard position and the overpass arc is the arc $A_s^+$ shown in
Figure \ref{fig:T-and-Tbar}.
Note that the bigrading shift in $C_p^+$ given in equations
(\ref{eqn:h}) and (\ref{eqn:q}) is unchanged under these isotopies.
Consider the oriented link diagram $T^+ = T \cup A_s^+$.
Recall that $a_+(A_s^+,T)$ and $a_-(A_s^+,T)$ denote the number of
positive and negative crossings between $A_s^+$ and $T$.
The loop number $\ell(T)$ is given by
\begin{align}
  \label{eqn:ell}
  \ell(T) = a_+(A_s^+,T) + a_-(A_s^+,T).
\end{align}
Let $n_+(T^+)$ and $n_-(T^+)$ denote the total number of positive and
negative crossings of the link diagram $T^+$:
\begin{align}
  \label{eqn:n-pm}
  n_\pm(T^+) = m_\pm(T) + a_\pm(A_s^+,T).
\end{align}
Using equations (\ref{eqn:ell}) and (\ref{eqn:n-pm}), we can express
equations (\ref{eqn:h}) and (\ref{eqn:q}) for the bigrading shift in
$C_p^+$ as
\begin{align}
  \label{eqn:h-loop}
  h^+(T,p) &=
  -n_-(T^+) + \frac{1}{2}(\ell(T) + w(p)) + r(p), \\
  \label{eqn:q-loop}
  q^+(T,p) &=
  n_+(T^+) - 2n_-(T^+) + \frac{1}{2}(\ell(T) + 3w(p)) + r(p).
\end{align}
The loop number of $\Tbar$ and the number of positive and negative
crossings of $\Tbar^+$ are given by
\begin{align}
  \label{eqn:ell-npm}
  &\ell(\Tbar) = \ell(T) - 1, &
  &n_\pm(\Tbar^+) = n_\pm(T^+).
\end{align}
The resolution degrees of the planar tangles $[p]_0$ and $[p]_1$
induced by $p$ are
\begin{align}
  \label{eqn:r-p0-p1}
  r([p]_0) &= r(p), &
  r([p]_1) &= r(p) + 1.
\end{align}

Consider the case that $p$ is type $\ptb$.
The outermost loop of $p$ is oriented clockwise, so
\begin{align}
  \label{eqn:w-p0}
  w([p]_0) = w(p) + 1.
\end{align}
Substituting equations (\ref{eqn:ell-npm}), (\ref{eqn:r-p0-p1}), and
(\ref{eqn:w-p0}) into equations (\ref{eqn:h-loop}) and
(\ref{eqn:q-loop}), we find
\begin{align}
  \label{eqn:h-q-1A}
  &h^+(\Tbar, [p]_0) = h^+(T,p), &
  &q^+(\Tbar,[p]_0) = q^+(T,p) + 1.
\end{align}
As an ungraded vector space $A_p = C_p$ is identified as the subspace
$C_p \otimes x \subset C_p \otimes A^{dc} = C_{[p]_0}$, so from
equation (\ref{eqn:h-q-1A}) and the fact that $x$ has bigrading
$(0,-1)$, it follows that $A_p = C_p^+$ as bigraded vector spaces.

Consider the case that $p$ is type $\ptc$.
The outermost loop of $p$ is oriented counterclockwise, so
\begin{align}
  \label{eqn:w-p1}
  w([p]_1) = w(p) - 1.
\end{align}
Substituting equations (\ref{eqn:ell-npm}), (\ref{eqn:r-p0-p1}), and
(\ref{eqn:w-p1}) into equations (\ref{eqn:h-loop}) and
(\ref{eqn:q-loop}), we find
\begin{align}
  \label{eqn:h-q-1B}
  &h^+(\Tbar, [p]_1) = h^+(T,p), &
  &q^+(\Tbar,[p]_1) = q^+(T,p) - 1.
\end{align}
As an ungraded vector space $A_p = C_p$ is identified as the subspace
$C_p \otimes e \subset C_p \otimes A^{db} = C_{[p]_1}$,
so from equation (\ref{eqn:h-q-1B}) and the fact that and $e$ has
bigrading $(0,1)$ it follows that $A_p = C_p^+$ as bigraded vector
spaces.

Consider the case that $p$ is type $\pto$.
The winding numbers of the planar tangles induced by $p$ are given by
equation (\ref{eqn:w-p0}) for $[p]_0$ and equation (\ref{eqn:w-p1})
for $[p]_1$, so
equations (\ref{eqn:h-q-1A}) and (\ref{eqn:h-q-1B}) hold for
$[p]_0$ and $[p]_1$.
As an ungraded vector space $A = C_p$ is identified as
$W_p \otimes A^{bc}$, where
$C_{[p]_0} = W_p \otimes e$ and
$C_{[p]_1} = W_p \otimes x$,
so from equations (\ref{eqn:h-q-1A}) and (\ref{eqn:h-q-1B}) and the
bigradings of $e$ and $x$ it follows that $A_p = C_p^+$ as bigraded
vector spaces.
\end{proof}

\section{Induced saddles}
\label{appendix:induced-saddles}

For each saddle $s:p\rightarrow q$ in the cube of resolutions of $T$,
we compute the map $\partialbar^1(s)$ defined in equation
(\ref{eqn:dbar-1-s-def}), which collects the terms of
$\partial_\Tbar^0$ corresponding to the induced saddles of $s$, as
well as the terms of $\partial_\Tbar^+$ corresponding to pairs of
successive saddles, one of which is an induced saddle of $s$ and one
of which is an ancillary saddle:
\begin{align*}
  \partialbar^1(s) =
  T_{00}(s) + T_{11}(s) +
  \Ttilde^L_{10}(n_q) Q_{00}(s) +
  Q_{10}(n_q) \Ttilde^L_{00}(s) +
  \Ttilde^L_{11}(s) Q_{10}(n_p) +
  Q_{11}(s) \Ttilde^L_{10}(n_p).
\end{align*}
The terms of $\partialbar^1(s)$ involving pairs of saddles correspond
to the following square of induced and ancillary saddles:
\begin{eqnarray*}
\begin{tikzcd}
  C_{[p]_0}
  \arrow{r}[swap]{[s]_0}
  \arrow{d}[swap]{n_p} &
  C_{[q]_0}
  \arrow{d}[swap]{n_q} \\
  C_{[p]_1}
  \arrow{r}[swap]{[s]_1} &
  C_{[q]_1},
\end{tikzcd}
\end{eqnarray*}
and by Lemma \ref{lemma:zero-unless-I-N} give a nonzero contribution
to $\partialbar^1(s)$ only if this square is interleaved or nested.
We also compute the maps
$Q_{00}(s)$, $\Ttilde_{00}^L(s)$, $Q_{11}(s)$, and
$\Ttilde_{11}^L(s)$ due to the induced
saddles $[s]_0$ and $[s]_1$ of $s$ that contribute to $\partialbar^2$.
These results are straightforward calculations that are very similar
to the calculations used to prove Lemmas \ref{lemma:1A},
\ref{lemma:1B}, and \ref{lemma:2}, so we will be somewhat brief.

\begin{lemma}
\label{lemma:W2-m}
For a type $W2_-$ saddle $s:p(\ptc) \rightarrow q(\ptb)$, the map
$\partialbar^1(s)$ has the form given in equation
(\ref{eqn:dbar-1-s-form}), where
\begin{align*}
  &\partial_A(s) = 0, &
  &\beta_1(s) = Q(s), &
  &\alpha_2(s) = Q(s).
\end{align*}
The saddle $s$ gives the following map that contributes to
$\partialbar^2$:
\begin{align*}
  \Ttilde^L_{00}(s) &=
  \left(\begin{array}{ccc}
      0 & 0 & 0 \\
      0 & Q(s) & 0 \\
      0 & 0 & 0
  \end{array}\right).
\end{align*}
\end{lemma}

\begin{proof}
We have the following square of saddles, which is neither interleaved
nor nested:
\begin{eqnarray*}
\begin{tikzcd}
  C_{[p]_0}
  \arrow{r}{C_+^L}[swap]{[s]_0}
  \arrow{d}{C_+^R}[swap]{n_p} &
  C_{[q]_0}
  \arrow{d}{C_-^L}[swap]{n_q} \\
  C_{[p]_1}
  \arrow{r}{C_-^R}[swap]{[s]_1} &
  C_{[q]_1}.
\end{tikzcd}
\end{eqnarray*}
We have the following vector spaces and linear maps:
\begin{align*}
  & C_p = A^{\otimes c(p)}, &
  & C_q = C_p, &
  & Q(s) = \id_{C_p}, \\
  & C_{[p]_0} = C_p, &
  & C_{[q]_0} = C_p \otimes A^{dc}, &
  & T_{00}(s) = C_p \otimes \etadot,\, 
  \Ttilde^L_{00}(s) = C_p \otimes \eta, \\
  & C_{[p]_1} = C_p \otimes A^{db}, &
  & C_{[q]_1} = C_p, &
  & T_{11}(s) = C_p \otimes \epsilondot.
\end{align*}
Using the decompositions of $C_p$ and $C_q$ defined in equations
(\ref{eqn:decomp-1B}) and (\ref{eqn:decomp-1A}), we find that
\begin{align*}
  \partialbar^1(s) =
  T_{00}(s) + T_{11}(s) =
  \left(\begin{array}{ccc}
      0 & Q(s) & 0 \\
      0 & 0 & 0 \\
      0 & 0 & 0 \\
  \end{array}\right) +
  \left(\begin{array}{ccc}
      0 & 0 & 0 \\
      0 & 0 & 0 \\
      Q(s) & 0 & 0 \\
  \end{array}\right) =
  \left(\begin{array}{ccc}
      0 & Q(s) & 0 \\
      0 & 0 & 0 \\
      Q(s) & 0 & 0 \\
  \end{array}\right),
\end{align*}
and the saddle $[s]_0$ gives the map $\Ttilde^L_{00}(s)$ stated in the
lemma.
\end{proof}

\begin{lemma}
For a type $W2_+$ saddle $s:p(\ptb) \rightarrow q(\ptc)$, the map
$\partialbar^1(s)$ has the form given in equation
(\ref{eqn:dbar-1-s-form}), where
\begin{align*}
  &\partial_A(s) = 0, &
  &\beta_1(s) = 0, &
  &\alpha_2(s) = 0.
\end{align*}
The saddle $s$ gives the following map that contributes to
$\partialbar^2$:
\begin{align*}
  \Ttilde^L_{00}(s) &=
  \left(\begin{array}{ccc}
    0 & 0 & 0 \\
    P(s) & 0 & 0 \\
    0 & 0 & 0
  \end{array}\right).
\end{align*}
\end{lemma}

\begin{proof}
We have the following square of saddles, which is neither interleaved
nor nested:
\begin{eqnarray*}
\begin{tikzcd}
  C_{[p]_0}
  \arrow{r}{C_-^L}[swap]{[s]_0}
  \arrow{d}{C_-^L}[swap]{n_p} &
  C_{[q]_0}
  \arrow{d}{C_+^R}[swap]{n_q} \\
  C_{[p]_1}
  \arrow{r}{C_+^R}[swap]{[s]_1} &
  C_{[q]_1}.
\end{tikzcd}
\end{eqnarray*}
A calculation similar to the one used in the proof of Lemma
\ref{lemma:W2-m} shows that
\begin{align*}
  \partialbar^1(s) = T_{00}(s) + T_{11}(s) =
  \left(\begin{array}{ccc}
    0 & 0 & 0 \\
    0 & P(s) & 0 \\
    0 & 0 & 0
  \end{array}\right) +
  \left(\begin{array}{ccc}
    0 & 0 & 0 \\
    0 & 0 & 0 \\
    0 & 0 & P(s)
  \end{array}\right) =
  \left(\begin{array}{ccc}
    0 & 0 & 0 \\
    0 & P(s) & 0 \\
    0 & 0 & P(s)
  \end{array}\right),
\end{align*}
and the saddle $[s]_0$ gives the map $\Ttilde^L_{00}(s)$ stated
in the lemma.
\end{proof}

\begin{lemma}
\label{lemma:W3-m}
For a type $W3_\pm$ saddle $s:p(\ptb) \rightarrow q(\ptb)$ or
$s:p(\ptc) \rightarrow q(\ptc)$, the map $\partialbar^1(s)$ has the
form given in equation (\ref{eqn:dbar-1-s-form}), where
\begin{align*}
  &\partial_A(s) = 0, &
  &\beta_1(s) = 0, &
  &\alpha_2(s) = 0.
\end{align*}
A type $W3_-$ saddle $s:p(\ptb) \rightarrow q(\ptb)$ gives the
following maps that contribute to $\partialbar^2$:
\begin{align*}
  Q_{00}(s) &=
  \left(\begin{array}{ccc}
    Q(s) & 0 & 0 \\
    0 & Q(s) & 0 \\
    0 & 0 & 0
  \end{array}\right), &
  Q_{11}(s) &=
  \left(\begin{array}{ccc}
    0 & 0 & 0 \\
    0 & 0 & 0 \\
    0 & 0 & Q(s)
  \end{array}\right).
\end{align*}
A type $W3_-$ saddle $s:p(\ptc) \rightarrow q(\ptc)$ gives the
following maps that contribute to $\partialbar^2$:
\begin{align*}
  Q_{00}(s) &=
  \left(\begin{array}{ccc}
    0 & 0 & 0 \\
    0 & Q(s) & 0 \\
    0 & 0 & 0
  \end{array}\right), &
  Q_{11}(s) &=
  \left(\begin{array}{ccc}
    Q(s) & 0 & 0 \\
    0 & 0 & 0 \\
    0 & 0 & Q(s)
  \end{array}\right).
\end{align*}
A type $W3_+$ saddle does not give any maps that contribute to
$\partialbar^2$.
\end{lemma}

\begin{proof}
For a saddle $s:p(\ptb) \rightarrow q(\ptb)$, we have
the following square of saddles, which is neither interleaved nor
nested:
\begin{eqnarray*}
\begin{tikzcd}
  C_{[p]_0}
  \arrow{r}{W_\pm}[swap]{[s]_0}
  \arrow{d}{C_-^L}[swap]{n_p} &
  C_{[q]_0}
  \arrow{d}{C_-^L}[swap]{n_q} \\
  C_{[p]_1}
  \arrow{r}{W_\pm}[swap]{[s]_1} &
  C_{[q]_1}.
\end{tikzcd}
\end{eqnarray*}
Thus $\partialbar^1(s) = 0$.
A calculation similar to the one used in the proof of Lemma
\ref{lemma:W2-m} shows that for a saddle $s$ of type $W3_-$ the
saddles $[s]_0$ and $[s]_1$ give the maps $Q_{00}(s)$ and $Q_{11}(s)$
stated in the lemma.
The proof for a saddle $s:p(\ptc) \rightarrow q(\ptc)$ is similar.
\end{proof}

\begin{lemma}
For a type $W4_\pm$ saddle $s:p(\pto) \rightarrow q(\pto)$, the map
$\partialbar^1(s)$ has the form given in equation
(\ref{eqn:dbar-1-s-form}), where
\begin{align*}
  &\partial_A(s) = 0, &
  &\beta_1(s) = 0, &
  &\alpha_2(s) = 0.
\end{align*}
A type $W4_-$ saddle $s$ gives the following maps that contribute to
$\partialbar^2$:
\begin{align*}
  Q_{00}(s) &=
  \left(\begin{array}{ccc}
    Q(s) \id_{C_{[p]_0}} & 0 & 0 \\
    0 & 0 & 0 \\
    0 & 0 & 0
  \end{array}\right), &
  Q_{11}(s) &=
  \left(\begin{array}{ccc}
    Q(s) \id_{C_{[p]_1}} & 0 & 0 \\
    0 & 0 & 0 \\
    0 & 0 & 0
  \end{array}\right).
\end{align*}
A type $W4_+$ saddle does not give any maps that contribute to
$\partialbar^2$.
\end{lemma}

\begin{proof}
We have the following square of saddles, which is neither interleaved
nor nested:
\begin{eqnarray*}
\begin{tikzcd}
  C_{[p]_0}
  \arrow{r}{W_\pm}[swap]{[s]_0}
  \arrow{d}{W_-}[swap]{n_p} &
  C_{[q]_0}
  \arrow{d}{W_-}[swap]{n_q} \\
  C_{[p]_1}
  \arrow{r}{W_\pm}[swap]{[s]_1} &
  C_{[q]_1}.
\end{tikzcd}
\end{eqnarray*}
Thus $\partialbar^1(s) = 0$.
A calculation similar to the one used in the proof of Lemma
\ref{lemma:W2-m} shows that for a saddle $s$ of type $W4_-$ the
saddles $[s]_0$ and $[s]_1$ give the maps $Q_{00}(s)$ and $Q_{11}(s)$
stated in the lemma.
\end{proof}

\begin{lemma}
\label{lemma:C1-1A}
For a type $C1_\pm^\beta$ saddle $s:p(\ptb) \rightarrow q(\ptb)$, the
term $\partialbar^1(s)$ has the form given in equation
(\ref{eqn:dbar-1-s-form}), where
\begin{align*}
  &\partial_A(s) = T(s), &
  &\beta_1(s) = \Ttilde(s), &
  &\alpha_2(s) = 0.
\end{align*}
The saddle $s$ gives the following map that contributes to
$\partialbar^2$:
\begin{align*}
  \Ttilde^L_{11}(s) &=
  \left(\begin{array}{ccc}
    0 & 0 & 0 \\
    0 & 0 & 0 \\
    0 & 0 & \Ttilde^R(s)
  \end{array}\right).
\end{align*}
\end{lemma}

\begin{proof}
We have the following square of saddles, which is neither interleaved
nor nested:
\begin{eqnarray*}
\begin{tikzcd}
  C_{[p]_0}
  \arrow{r}{C_\pm^C}[swap]{[s]_0}
  \arrow{d}{C_-^L}[swap]{n_p} &
  C_{[q]_0}
  \arrow{d}{C_-^L}[swap]{n_q} \\
  C_{[p]_1}
  \arrow{r}{C_\pm^\betabar}[swap]{[s]_1} &
  C_{[q]_1}.
\end{tikzcd}
\end{eqnarray*}
For a saddle $s$ of type $C1_+^\beta$,
we have the following vector spaces and linear maps:
\begin{align*}
  &C_p = A^{\otimes c(p)}, &
  &C_q = C_p \otimes A, &
  &
  T(s) = \id_{C_p} \otimes \etadot,\,
  \Ttilde(s) = \id_{C_p} \otimes \eta, \\
  &C_{[p]_0} = C_p \otimes A^{dc}, &
  &C_{[q]_0} = C_q \otimes A^{dc} = C_p \otimes A \otimes A^{dc}, &
  &T_{00}(s) = \id_{C_p} \otimes \Delta, \\
  &C_{[p]_1} = C_p, &
  &C_{[q]_1} = C_q = C_p \otimes A, &
  &T_{11}(s) = \id_{C_p} \otimes \etadot,\,
  \Ttilde_{11}(s) = \id_{C_p} \otimes \eta.
\end{align*}
Note that
\begin{align*}
  T_{00}(s) =
  \id_{C_p} \otimes \Delta =
  \id_{C_p} \otimes \etadot \otimes \id_A +
  \id_{C_p} \otimes \eta \otimes \id_{xe} =
  T(s) \otimes \id_A + \Ttilde(s) \otimes \id_{xe}.
\end{align*}
Using the decompositions of $C_p$ and $C_q$ given in equation
(\ref{eqn:decomp-1A}), we find that
\begin{align*}
  \partialbar^1(s) =
  T_{00}(s) + T_{11}(s) =
  \left(\begin{array}{ccc}
    T(s) & \Ttilde(s) & 0 \\
    0 & T(s) & 0 \\
    0 & 0 & 0
  \end{array}\right) +
  \left(\begin{array}{ccc}
    0 & 0 & 0 \\
    0 & 0 & 0 \\
    0 & 0 & T(s)
  \end{array}\right) =
  \left(\begin{array}{ccc}
    T(s) & \Ttilde(s) & 0 \\
    0 & T(s) & 0 \\
    0 & 0 & T(s)
  \end{array}\right),
\end{align*}
and the saddle $[s]_1$ gives the map $\Ttilde_{11}^L(s)$ stated in the
lemma, where we have used the fact that $[s]_1$ attaches to the
opposite side of the arc component as $s$.
The proof for a saddle of type $C1_-^\beta$ is similar.
\end{proof}

\begin{lemma}
For a type $C1_\pm^\beta$ saddle $s:p(\ptc) \rightarrow q(\ptc)$, the
term $\partialbar^1(s)$ has the form given in equation
(\ref{eqn:dbar-1-s-form}), where
\begin{align*}
  &\partial_A(s) = T(s), &
  &\beta_1(s) = 0, &
  &\alpha_2(s) = \Ttilde(s).
\end{align*}
The saddle $s$ gives the following map that contributes to
$\partialbar^2$:
\begin{align*}
  \Ttilde^L_{00}(s) &=
  \left(\begin{array}{ccc}
    0 & 0 & 0 \\
    0 & \Ttilde^R(s) & 0 \\
    0 & 0 & 0
  \end{array}\right).
\end{align*}
\end{lemma}

\begin{proof}
We have the following square of saddles, which is neither interleaved
nor nested:
\begin{eqnarray*}
\begin{tikzcd}
  C_{[p]_0}
  \arrow{r}{C_\pm^\betabar}[swap]{[s]_0}
  \arrow{d}{C_+^R}[swap]{n_p} &
  C_{[q]_0}
  \arrow{d}{C_+^R}[swap]{n_q} \\
  C_{[p]_1}
  \arrow{r}{C_\pm^C}[swap]{[s]_1} &
  C_{[q]_1}.
\end{tikzcd}
\end{eqnarray*}
A calculation similar to the one used in the proof of Lemma
\ref{lemma:C1-1A} shows that
\begin{align*}
  \partialbar^1(s) = T_{00}(s) + T_{11}(s) =
  \left(\begin{array}{ccc}
    0 & 0 & 0 \\
    0 & T(s) & 0 \\
    0 & 0 & 0
  \end{array}\right) +
  \left(\begin{array}{ccc}
    T(s) & 0 & 0 \\
    0 & 0 & 0 \\
    \Ttilde(s) & 0 & T(s)
  \end{array}\right) =
  \left(\begin{array}{ccc}
    T(s) & 0 & 0 \\
    0 & T(s) & 0 \\
    \Ttilde(s) & 0 & T(s)
  \end{array}\right),
\end{align*}
and the saddle $[s]_0$ gives the map $\Ttilde_{00}^L(s)$ stated in the
lemma, where we have used the fact that $[s]_0$ attaches to the
opposite side of the arc component as $s$.
\end{proof}

\begin{lemma}
\label{lemma:C2-p-L}
For a type $C2_+^L$ saddle $s:p(\ptb) \rightarrow q(\pto)$, the map
$\partialbar^1(s)$ has the form given in equation
(\ref{eqn:dbar-1-s-form}), where
\begin{align*}
  &\partial_A(s) = T(s), &
  &\beta_1(s) = \Ttilde^L(s), &
  &\alpha_2(s) = 0.
\end{align*}
The saddle $s$ gives the following map that contributes to
$\partialbar^2$:
\begin{align*}
  Q_{11}(s) &=
  \left(\begin{array}{ccc}
      0 & 0 & T(s) \\
      0 & 0 & 0 \\
      0 & 0 & 0
  \end{array}\right).
\end{align*}
\end{lemma}

\begin{proof}
We have the following square of saddles, which is interleaved of type
$I_-$:
\begin{eqnarray*}
\begin{tikzcd}
  C_{[p]_0}
  \arrow{r}{C_-^R}[swap]{[s]_0}
  \arrow{d}{C_-^L}[swap]{n_p} &
  C_{[q]_0}
  \arrow{d}{W_-}[swap]{n_q} \\
  C_{[p]_1}
  \arrow{r}{W_-}[swap]{[s]_1} &
  C_{[q]_1}.
\end{tikzcd}
\end{eqnarray*}
We have the following vector spaces and linear maps:
\begin{align*}
  & C_p = A^{\otimes c(p)} = W_q, &
  & C_q = C_p \otimes A^{bc}, &
  &
  T(s) = \id_{C_p} \otimes \etadot,\,
  \Ttilde^L(s) = \id_{C_p} \otimes \eta, \\
  & C_{[p]_0} = C_p \otimes A^{dc}, &
  & C_{[q]_0} = C_p = C_p \otimes e, &
  & T_{00}(s) = \id_{C_p} \otimes \epsilondot, \\
  & C_{[p]_1} = C_p, &
  & C_{[q]_1} = C_p = C_p \otimes x, &
  & Q_{11}(s) = \id_{C_p}.
\end{align*}
Using the decompositions of $C_p$ and $C_q$ given in equations
(\ref{eqn:decomp-1A}) and (\ref{eqn:decomp-2}), we find that
\begin{align*}
  \partialbar^1(s) &=
  T_{00}(s) + Q_{11}(s) \Ttilde_{10}^L(n_p) \\
  &=
  \left(\begin{array}{ccc}
    0 & \tilde{T}^L(s) & 0 \\
    0 & 0 & 0 \\
    0 & 0 & 0
  \end{array}\right) +
  \left(\begin{array}{ccc}
    0 & 0 & T(s) \\
    0 & 0 & 0 \\
    0 & 0 & 0
  \end{array}\right)
  \left(\begin{array}{ccc}
    0 & 0 & 0 \\
    0 & 0 & 0 \\
    \id_{C_p} & 0 & 0
  \end{array}\right) =
  \left(\begin{array}{ccc}
    T(s) & \tilde{T}^L(s) & 0 \\
    0 & 0 & 0 \\
    0 & 0 & 0
  \end{array}\right),
\end{align*}
and the saddle $[s]_1$ gives the map $Q_{11}(s)$ stated in the
lemma.
\end{proof}

\begin{lemma}
For a a type $C2_-^L$ saddle $s:p(\pto) \rightarrow q(\ptb)$, the map
$\partialbar^1(s)$ has the form given in equation
(\ref{eqn:dbar-1-s-form}), where
\begin{align*}
  &\partial_A(s) = T(s), &
  &\beta_1(s) = 0, &
  &\alpha_2(s) = 0.
\end{align*}
The saddle $s$ does not give any maps that contribute to
$\partialbar^2$.
\end{lemma}

\begin{proof}
We have the following square of saddles, which is neither interleaved
nor nested:
\begin{eqnarray*}
\begin{tikzcd}
  C_{[p]_0}
  \arrow{r}{C_+^R}[swap]{[s]_0}
  \arrow{d}{W_-}[swap]{n_p} &
  C_{[q]_0}
  \arrow{d}{C_-^L}[swap]{n_q} \\
  C_{[p]_1}
  \arrow{r}{W_+}[swap]{[s]_1} &
  C_{[q]_1}.
\end{tikzcd}
\end{eqnarray*}
A calculation similar to the one used in the proof of Lemma
\ref{lemma:C2-p-L} shows that
\begin{align*}
  \partialbar^1(s) = T_{00}(s) =
  \left(\begin{array}{ccc}
    T(s) & 0 & 0 \\
    0 & 0 & 0 \\
    0 & 0 & 0
  \end{array}\right).
\end{align*}
\end{proof}

\begin{lemma}
For a type $C2_+^R$ saddle $s:p(\ptc) \rightarrow q(\pto)$, the map
$\partialbar^1(s)$ has the form given in equation
(\ref{eqn:dbar-1-s-form}), where
\begin{align*}
  &\partial_A(s) = T(s), &
  &\beta_1(s) = 0, &
  &\alpha_2(s) = 0.
\end{align*}
The saddle $s$ gives the following map that contributes to
$\partialbar^1(s)$:
\begin{align*}
  \Ttilde_{11}^L(s) &=
  \left(\begin{array}{ccc}
    0 & 0 & T(s) \\
    0 & 0 & 0 \\
    0 & 0 & 0
  \end{array}\right).
\end{align*}
\end{lemma}

\begin{proof}
We have the following square of saddles, which is neither interleaved
nor nested:
\begin{eqnarray*}
\begin{tikzcd}
  C_{[p]_0}
  \arrow{r}{W_+}[swap]{[s]_0}
  \arrow{d}{C_+^R}[swap]{n_p} &
  C_{[q]_0}
  \arrow{d}{W_-}[swap]{n_q} \\
  C_{[p]_1}
  \arrow{r}{C_-^L}[swap]{[s]_1} &
  C_{[q]_1}.
\end{tikzcd}
\end{eqnarray*}
A calculation similar to the one used in the proof of Lemma
\ref{lemma:C2-p-L} shows that
\begin{align*}
  \partialbar^1(s) = T_{11}(s) =
  \left(\begin{array}{ccc}
    T(s) & 0 & 0 \\
    0 & 0 & 0 \\
    0 & 0 & 0
  \end{array}\right),
\end{align*}
and the saddle $[s]_1$ gives the map $\Ttilde_{11}^L(s)$ stated in the
lemma.
\end{proof}

\begin{lemma}
For a type $C2_-^R$ saddle $s:p(\pto) \rightarrow q(\ptc)$, the map
$\partialbar^1(s)$ has the form given in equation
(\ref{eqn:dbar-1-s-form}), where
\begin{align*}
  &\partial_A(s) = T(s), &
  &\beta_1(s) = 0, &
  &\alpha_2(s) = \Ttilde^R(s).
\end{align*}
The saddle $s$ gives the following maps that contribute to
$\partialbar^2$:
\begin{align*}
  Q_{00}(s) &=
  \left(\begin{array}{ccc}
    0 & 0 & 0 \\
    T(s) & 0 & 0 \\
    0 & 0 & 0
  \end{array}\right), &
  \Ttilde_{11}^L(s) &=
  \left(\begin{array}{ccc}
    \Ttilde^R(s) & 0 & 0 \\
    0 & 0 & 0 \\
    0 & 0 & 0
  \end{array}\right).
\end{align*}
\end{lemma}

\begin{proof}
We have the following square of saddles, which is interleaved of type
$I_+$:
\begin{eqnarray*}
\begin{tikzcd}
  C_{[p]_0}
  \arrow{r}{W_-}[swap]{[s]_0}
  \arrow{d}{W_-}[swap]{n_p} &
  C_{[q]_0}
  \arrow{d}{C_+^R}[swap]{n_q} \\
  C_{[p]_1}
  \arrow{r}{C_+^L}[swap]{[s]_1} &
  C_{[q]_1}.
\end{tikzcd}
\end{eqnarray*}
We have the following vector spaces and linear maps:
\begin{align*}
  & C_p = C_q \otimes A^{bc}, &
  & C_q = A^{\otimes c(q)} = W_p, &
  &
  T(s) = \id_{C_q} \otimes \epsilondot,\,
  \Ttilde^R(s) = \id_{C_q} \otimes \epsilon, \\
  & C_{[p]_0} = C_q = C_q \otimes e, &
  & C_{[q]_0} = C_q, &
  & Q_{00}(s) = \id_{C_q}, \\
  & C_{[p]_1} = C_q = C_q \otimes x, &
  & C_{[q]_1} = C_q \otimes A^{db}, &
  &
  T_{11}(s) = \id_{C_q} \otimes \etadot,\,
  \Ttilde_{11}^L(s) = \id_{C_q} \otimes \eta.
\end{align*}
Using the decompositions of $C_p$ and $C_q$ given in equations
(\ref{eqn:decomp-2}) and (\ref{eqn:decomp-1B}), we find that
\begin{align*}
  \partialbar^1(s) &=
  T_{11}(s) + \Ttilde_{11}^L(s) Q_{10}(n_p) \\
  &=
  \left(\begin{array}{ccc}
    0 & 0 & 0 \\
    0 & 0 & 0 \\
    \Ttilde^R(s) & 0 & 0
  \end{array}\right) +
  \left(\begin{array}{ccc}
    \Ttilde^R(s) & 0 & 0 \\
    0 & 0 & 0 \\
    0 & 0 & 0
  \end{array}\right)
  \left(\begin{array}{ccc}
    \id_{C_{[p]_0} C_{[p]_1}} & 0 & 0 \\
    0 & 0 & 0 \\
    0 & 0 & 0
  \end{array}\right) =
  \left(\begin{array}{ccc}
    T(s) & 0 & 0 \\
    0 & 0 & 0 \\
    \Ttilde^R(s) & 0 & 0
  \end{array}\right),
\end{align*}
where we have used the fact that
\begin{align*}
  \Ttilde^R(s) \id_{C_{[p]_0} C_{[p]_1}} =
  (\id_{C_q} \otimes \epsilon)(\id_{C_q} \otimes \id_{xe}) =
  \id_{C_q} \otimes \epsilondot = T(s),
\end{align*}
and the saddles $[s]_0$ and $[s]_1$ give the maps $Q_{00}(s)$ and
$\Ttilde_{11}^L(s)$ stated in the lemma.
\end{proof}

\begin{lemma}
\label{lemma:C3}
For a type $C3_\pm^\beta$ saddle $s:p(\ptb) \rightarrow q(\ptb)$ or
$s:p(\ptc) \rightarrow q(\ptc)$, the map $\partialbar^1(s)$ has
the form given in equation (\ref{eqn:dbar-1-s-form}), where
\begin{align*}
  &\partial_A(s) = T(s), &
  &\beta_1(s) = 0, &
  &\alpha_2(s) = 0.
\end{align*}
A type $C3_\pm^\beta$ saddle $s:p(\ptb) \rightarrow q(\ptb)$ gives the
following maps that contribute to $\partialbar^2$:
\begin{align*}
  \Ttilde^L_{00}(s) &=
  \left(\begin{array}{ccc}
    \Ttilde^L(s) & 0 & 0 \\
    0 & \Ttilde^L(s) & 0 \\
    0 & 0 & 0
  \end{array}\right), &
  \Ttilde^L_{11}(s) &=
  \left(\begin{array}{ccc}
    0 & 0 & 0 \\
    0 & 0 & 0 \\
    0 & 0 & \Ttilde^L(s)
  \end{array}\right).
\end{align*}
A type $C3_\pm^\beta$ saddle $s:p(\ptc) \rightarrow q(\ptc)$ gives
the following maps that contribute to $\partialbar^2$:
\begin{align*}
  \Ttilde^L_{00}(s) &=
  \left(\begin{array}{ccc}
    0 & 0 & 0 \\
    0 & \Ttilde^L(s) & 0 \\
    0 & 0 & 0
  \end{array}\right), &
  \Ttilde^L_{11}(s) &=
  \left(\begin{array}{ccc}
    \Ttilde^L(s) & 0 & 0 \\
    0 & 0 & 0 \\
    0 & 0 & \Ttilde^L
  \end{array}\right).
\end{align*}
\end{lemma}

\begin{proof}
For $s:p(\ptb) \rightarrow q(\ptb)$, we have the following square of
saddles, which is neither interleaved nor nested:
\begin{eqnarray*}
\begin{tikzcd}
  C_{[p]_0}
  \arrow{r}{C_\pm^\beta}[swap]{[s]_0}
  \arrow{d}{C_-^L}[swap]{n_p} &
  C_{[q]_0}
  \arrow{d}{C_-^L}[swap]{n_q} \\
  C_{[p]_1}
  \arrow{r}{C_\pm^\beta}[swap]{[s]_1} &
  C_{[q]_1}.
\end{tikzcd}
\end{eqnarray*}
A calculation shows:
\begin{align*}
  \partialbar^1(s) = T_{00}(s) + T_{11}(s) =
  \left(\begin{array}{ccc}
    T(s) & 0 & 0 \\
    0 & T(s) & 0 \\
    0 & 0 & 0
  \end{array}\right) +
  \left(\begin{array}{ccc}
    0 & 0 & 0 \\
    0 & 0 & 0 \\
    0 & 0 & T(s)
  \end{array}\right) =
  \left(\begin{array}{ccc}
    T(s) & 0 & 0 \\
    0 & T(s) & 0 \\
    0 & 0 & T(s)
  \end{array}\right),
\end{align*}
and the saddles $[s]_0$ and $[s]_1$ give the maps $\Ttilde_{00}^L(s)$
and $\Ttilde_{11}^L(s)$ stated in the lemma.
The argument for $s:p(\ptc) \rightarrow q(\ptc)$ is similar.
Note that if $\beta = R$ then $\Ttilde^L(s) = 0$.
\end{proof}

\begin{lemma}
For a type $C4_\pm^\beta$ saddle $s:p(\pto) \rightarrow q(\pto)$, the
map $\partialbar^1(s)$ has the form given in equation
(\ref{eqn:dbar-1-s-form}), where
\begin{align*}
  &\partial_A(s) = T(s), &
  &\beta_1(s) = 0, &
  &\alpha_2(s) = 0.
\end{align*}
The saddle $s$ gives the following maps that contribute to
$\partialbar^2$:
\begin{align*}
  \Ttilde^L_{00}(s) &=
  \left(\begin{array}{ccc}
    \Ttilde^L(s) \id_{C_{[p]_0}} & 0 & 0 \\
    0 & 0 & 0 \\
    0 & 0 & 0
  \end{array}\right), &
  \Ttilde^L_{11}(s) &=
  \left(\begin{array}{ccc}
    \Ttilde^L(s) \id_{C_{[p]_1}} & 0 & 0 \\
    0 & 0 & 0 \\
    0 & 0 & 0
  \end{array}\right).
\end{align*}
\end{lemma}

\begin{proof}
We have the following square of saddles, which is disjoint:
\begin{eqnarray*}
\begin{tikzcd}
  C_{[p]_0}
  \arrow{r}{C_\pm^\beta}[swap]{[s]_0}
  \arrow{d}{W_-}[swap]{n_p} &
  C_{[q]_0}
  \arrow{d}{W_-}[swap]{n_q} \\
  C_{[p]_1}
  \arrow{r}{C_\pm^\beta}[swap]{[s]_1} &
  C_{[q]_1}.
\end{tikzcd}
\end{eqnarray*}
A calculation shows that
\begin{align*}
  \partialbar^1(s) = T_{00}(s) + T_{11}(s) =
  \left(\begin{array}{ccc}
    T(s)\id_{C_{[p]_0}} & 0 & 0 \\
    0 & 0 & 0 \\
    0 & 0 & 0
  \end{array}\right) +
  \left(\begin{array}{ccc}
    T(s) \id_{C_{[p]_1}} & 0 & 0 \\
    0 & 0 & 0 \\
    0 & 0 & 0
  \end{array}\right) =
  \left(\begin{array}{ccc}
    T(s) & 0 & 0 \\
    0 & 0 & 0 \\
    0 & 0 & 0
  \end{array}\right),
\end{align*}
and the saddles $[s]_0$ and $[s]_1$ give the maps $\Ttilde_{00}^L(s)$
and $\Ttilde_{11}^L(s)$ stated in the lemma.
Note that if $\beta = R$ then $\Ttilde^L(s) = 0$.
\end{proof}

\begin{lemma}
For a type $C1_\pm^C$ saddle $s:p(\pto) \rightarrow q(\pto)$, the map
$\partialbar^1(s)$ has the form given in equation
(\ref{eqn:dbar-1-s-form}), where
\begin{align*}
  &\partial_A(s) = T(s), &
  &\beta_1(s) = 0, &
  &\alpha_2(s) = 0.
\end{align*}
The saddle $s$ gives the following maps that contribute to
$\partialbar^2$:
\begin{align*}
  \Ttilde^L_{00}(s) &=
  \left(\begin{array}{ccc}
    \Ttilde^{L_0}(s) \id_{C_{[p]_0}} & 0 & 0 \\
    0 & 0 & 0 \\
    0 & 0 & 0
  \end{array}\right), &
  \Ttilde^L_{11}(s) &=
  \left(\begin{array}{ccc}
    \Ttilde^{L_1}(s) \id_{C_{[p]_1}} & 0 & 0 \\
    0 & 0 & 0 \\
    0 & 0 & 0
  \end{array}\right),
\end{align*}
where the maps $\Ttilde^{L_0}(s)$ and $\Ttilde^{L_1}(s)$ are described
in the proof.
\end{lemma}

\begin{proof}
Define $\sigma = R$ if $s$ splits or merges a circle \emph{inside} the
circle component of $p$ containing $b$ and $c$, and $\sigma = L$
otherwise.
We have the following square of saddles, which is nested of type
$N_\pm^\sigmabar$:
\begin{eqnarray*}
\begin{tikzcd}
  C_{[p]_0}
  \arrow{r}{C_\pm^\sigma}[swap]{[s]_0}
  \arrow{d}{W_-}[swap]{n_p} &
  C_{[q]_0}
  \arrow{d}{W_-}[swap]{n_q} \\
  C_{[p]_1}
  \arrow{r}{C_\pm^\sigmabar}[swap]{[s]_1} &
  C_{[q]_1}.
\end{tikzcd}
\end{eqnarray*}
Consider a type $C1_+^C$ saddle $s:p(\pto) \rightarrow q(\pto)$.
The vector spaces and linear maps are
\begin{align*}
  & C_p = W_p \otimes A^{bc} &
  & C_q = W_p \otimes A \otimes A^{bc} &
  &
  T(s) = \id_{W_p} \otimes \Delta, \\
  & C_{[p]_0} = W_p \otimes e &
  & C_{[q]_0} = W_p \otimes A \otimes e &
  &
  T_{00}(s) = T^\bullet(s) \id_{C_{[p]_0}},\,
  \Ttilde^L_{00}(s) = \Ttilde^{L_0}(s) \id_{C_{[p]_0}}, \\
  & C_{[p]_1} = W_p \otimes x &
  & C_{[q]_1} = W_p \otimes A \otimes x &
  &
  T_{11}(s) = T^\bullet(s) \id_{C_{[p]_1}},\,
  \Ttilde^L_{11}(s) = \Ttilde^{L_1}(s) \id_{C_{[p]_1}},
\end{align*}
where $W_p = A^{\otimes (c(p) - 1)}$ and we have defined
\begin{align*}
  & T^\bullet(s) = \id_{W_p} \otimes \etadot \otimes \id_A, &
  & T^\circ(s) = \id_{W_p} \otimes \eta \otimes \id_A, \\
  & \Ttilde^{L_0}(s) = \delta_{\sigma,L} T^\circ(s), &
  & \Ttilde^{L_1}(s) = \delta_{\sigmabar,L} T^\circ(s).
\end{align*}
Using the decompositions of $C_p$ and $C_q$ given in equation
(\ref{eqn:decomp-2}), we find that
\begin{align*}
  \partialbar^1(s) =
  T_{00}(s) + T_{11}(s) +
  Q_{10}(n_q) \Ttilde^L_{00}(s) + \Ttilde^L_{11}(s) Q_{10}(n_p) =
  \left(\begin{array}{ccc}
    T(s) & 0 & 0 \\
    0 & 0 & 0 \\
    0 & 0 & 0
  \end{array}\right).
\end{align*}
where we have used the fact that
\begin{eqnarray*}
  \lefteqn{T^\bullet(s)\id_{C_{[p]_0}} +
  T^\bullet(s)\id_{C_{[p]_1}} +
  \id_{C_{[q]_1}C_{[q]_0}}\Ttilde^{L_0}(s)\id_{C_{[p]_0}} +
  \Ttilde^{L_1}(s)\id_{C_{[p]_1}}\id_{C_{[p]_1}C_{[p]_0}} =} \\
  & &
  \id_{W_p} \otimes \etadot \otimes \id_{ee} +
  \id_{W_p} \otimes \etadot \otimes \id_{xx} +
  (\delta_{\sigma,L} + \delta_{\sigmabar,L})
  \id_{W_p} \otimes \eta \otimes \id_{xe} =
  \id_{W_p} \otimes \Delta = T(s),
\end{eqnarray*}
and the saddles $[s]_0$ and $[s]_1$ give the maps $\Ttilde_{00}^L(s)$
and $\Ttilde_{11}^L(s)$ stated in the lemma.
The argument for a saddle of type $C1_-^C$ is similar.
\end{proof}

\begin{lemma}
For a type $C2_\pm^C$ saddle $s:p(\pto) \rightarrow q(\pto)$, the map
$\partialbar^1(s)$ has the form given in equation
(\ref{eqn:dbar-1-s-form}), where
\begin{align*}
  &\partial_A(s) = T(s), &
  &\beta_1(s) = 0, &
  &\alpha_2(s) = 0.
\end{align*}
The saddle $s$ does not give any maps that contribute to
$\partialbar^2$.
\end{lemma}

\begin{proof}
We have the following square of saddles, which is neither interleaved
nor nested:
\begin{eqnarray*}
\begin{tikzcd}
  C_{[p]_0}
  \arrow{r}{C_\pm^C}[swap]{[s]_0}
  \arrow{d}{W_-}[swap]{n_p} &
  C_{[q]_0}
  \arrow{d}{W_-}[swap]{n_q} \\
  C_{[p]_1}
  \arrow{r}{C_\pm^C}[swap]{[s]_1} &
  C_{[q]_1}.
\end{tikzcd}
\end{eqnarray*}
A calculation shows that
\begin{align*}
  \partialbar^1(s) = T_{00}(s) + T_{11}(s) =
  \left(\begin{array}{ccc}
    T(s) \id_{C_{[p]_0}} & 0 & 0 \\
    0 & 0 & 0 \\
    0 & 0 & 0
  \end{array}\right) +
  \left(\begin{array}{ccc}
    T(s) \id_{C_{[p]_1}} & 0 & 0 \\
    0 & 0 & 0 \\
    0 & 0 & 0
  \end{array}\right) =
  \left(\begin{array}{ccc}
    T(s) & 0 & 0 \\
    0 & 0 & 0 \\
    0 & 0 & 0
  \end{array}\right).
\end{align*}
\end{proof}

\begin{lemma}
For a type $C3_\pm^C$ saddle
$s:p(\ptb) \rightarrow q(\ptb)$ or $s:p(\ptc) \rightarrow q(\ptc)$, the map
$\partialbar^1(s)$ has the form given in equation
(\ref{eqn:dbar-1-s-form}), where
\begin{align*}
  &\partial_A(s) = T(s), &
  &\beta_1(s) = 0, &
  &\alpha_2(s) = 0.
\end{align*}
The saddle $s$ does not give any maps that contribute to
$\partialbar^2$.
\end{lemma}

\begin{proof}
For a saddle $s:p(\ptb) \rightarrow q(\ptb)$, we have the following
square of saddles, which is neither interleaved nor nested:
\begin{eqnarray*}
\begin{tikzcd}
  C_{[p]_0}
  \arrow{r}{C_\pm^C}[swap]{[s]_0}
  \arrow{d}{C_-^L}[swap]{n_p} &
  C_{[q]_0}
  \arrow{d}{C_-^L}[swap]{n_q} \\
  C_{[p]_1}
  \arrow{r}{C_\pm^C}[swap]{[s]_1} &
  C_{[q]_1}.
\end{tikzcd}
\end{eqnarray*}
A calculation shows that
\begin{align*}
  \partialbar^1(s) = T_{00}(s) + T_{11}(s) =
  \left(\begin{array}{ccc}
    T(s) & 0 & 0 \\
    0 & T(s) & 0 \\
    0 & 0 & 0
  \end{array}\right) +
  \left(\begin{array}{ccc}
    0 & 0 & 0 \\
    0 & 0 & 0 \\
    0 & 0 & T(s)
  \end{array}\right) =
  \left(\begin{array}{ccc}
    T(s) & 0 & 0 \\
    0 & T(s) & 0 \\
    0 & 0 & T(s)
  \end{array}\right).
\end{align*}
The proof for a saddle $s:p(\ptc) \rightarrow q(\ptc)$ is similar.
\end{proof}

\end{appendix}

\section*{Acknowledgments}

The author would like to thank Matthew Hedden, Ciprian Manolescu, and
Peter Ozsv\'{a}th for helpful discussions.

\bibliographystyle{abbrv}
\bibliography{bib-v2-archive-khovanov-tangles-annulus}

\end{document}